\theoremstyle{plain}
\newtheorem{thm}{Theorem}[section]
\newtheorem{lem}[thm]{Lemma}
\newtheorem{defn}[thm]{Definition}
\newtheorem{prop}[thm]{Proposition}
\newtheorem{cor}[thm]{Corollary}
\newtheorem{rem}[thm]{Remark}
\newcommand{\C}{{\mathbb C}}
\newcommand{\R}{{\mathbb R}}
\newcommand{\N}{{\mathbb N}}
\newcommand{\Z}{{\mathbb Z}}
\newcommand{\B}{{\mathbb B}}
\newcommand{\D}{{\mathbb D}}
\newcommand{\bB}{{\mathbb S}}
\newcommand{\Be}{\mathcal B}
\newcommand{\Ca}{\mathcal C}
\newcommand{\Qo}{\mathcal Q}
\newcommand{\p}{\partial}
\newcommand{\z}{\zeta}
\newcommand{\Chi}{\mathcal{X}}
\title[Sharp norm estimates for the Bergman operator]
{Sharp norm estimates for the Bergman operator from weighted mixed-norm spaces to weighted Hardy  spaces}
\author{Carme Cascante}
\address{Dept.\ Matem\`atica Aplicada i An\`alisi, Universitat  de Barcelona, Gran Via 585, 08071 Barcelona, Spain}
\email{cascante@ub.edu}
\author{Joan F\`abrega}
\address{Dept.\ Matem\`atica Aplicada i An\`alisi, Universitat  de Barcelona, Gran Via 585, 08071 Barcelona, Spain}
\email{joan$_{-}$fabrega@ub.edu}
\author{Joaqu\'\i n M. Ortega}
\address{Dept.\ Matem\`atica Aplicada i An\`alisi, Universitat  de Barcelona, Gran Via 585, 08071 Barcelona, Spain}
\email{ortega@ub.edu}
\keywords{Weighted Hardy space, Bergman operator, sharp norm estimates.}
\subjclass[2010]{47B38, 32A35, 42B25, 32A37}
\date{\today}
\thanks{Partially supported by  DGICYT Grant MTM2011-27932-C02-01  and DURSI Grant 2014SGR 289.}
\begin{document}

\begin{abstract} 
In this paper we give sharp norm estimates for the Bergman operator acting from weighted 
mixed-norm spaces to weighted Hardy spaces in the ball,  
endowed with natural norms.
\end{abstract}

\maketitle

\section{Introduction}
The study of weighted norm inequalities for the Hardy-Littlewood maximal operator and for  singular  operators 
 in $\R^n$  and their relation with $A_p$-weights 
goes back to the works of Hunt, Muckenhoupt, Wheeden, Coiffman and Fefferman in the 70's. 
More recently,  many authors have studied the sharp dependence of the constants in  these  weighted norm inequalities. 
For the Hardy-Littlewood maximal operator, the weighted $L^p(\omega)$-norm 
is bounded, up to a  constant,  by $[\omega]_{A_p}^{1/(p-1)}$ 
and  the exponent is sharp in the sense that it can not be replaced by any smaller one (see \cite{Bu}).  For the more difficult 
case of singular integral operators
 in Euclidean spaces,  the so called $A_2$-conjecture for general Calderon-Zygmund operators was solved in \cite{Hy},
that is, the $L^2(\omega)$-norm of these operators are bounded, up to a constant, by $[\omega]_{A_2}$. 
Later, using a different approach,  a simple  and very elegant proof of the $A_2$-conjecture was given  in \cite{Le2}. 
The  sharp dependence on the weight has also been studied for other operators like the 
Lusin square function on $\R^n$ (see \cite{Le0} 
and the references therein). The proof of this result is based on 
the intrinsic square function of \cite{Wi} and strongly relies, 
among other key ingredients, on properties of convolution operators. 
Recently, an alternative proof has been  given in  \cite{Le}, which also proves the sharp dependence 
on the fixed aperture of the square function.  In the context of homogeneous spaces, some of these results have been extended 
 for the Hardy-Littlewood maximal operator and Calderon-Zygmund operators (see for instance \cite{Ka} and \cite{An-Va}  respectively).

A characterization of the measures on the unit ball $\B$ of $\C^n$  
for which the Bergman operator is bounded from the weighted $L^p$-space to the weighted Bergman space has been proved in \cite{Bekolle}. 
The condition is given in terms of the so called $B_p$-class. Later, sharp estimates on the norm of the Bergman operator on these weighted spaces were obtained in \cite{Po-Re}.
On the other hand, the boundedness of the Bergman operator acting 
from a mixed-norm space  to the Hardy space $H^p$ was proved in \cite{Co}.

The main objective of this paper is to obtain sharp estimates of the Bergman operator 
acting on weighted mixed-norm operators to weighted Hardy spaces, endowed with different norms.
The fact  that we are in a homogeneous space framework 
makes that  usual techniques, as convolution, can not be applied. In particular, 
it is necessary to use the specific properties of the kernels involved in the 
different problems we consider, as well as  the dyadic decomposition for homogeneous spaces obtained in \cite{Ka},
which gives the existence of adjacent and sparse families of cubes.

Before we state our main results, we recall some definitions.

Let $H=H(\B)$ be the space of holomorphic functions on the unit ball of $\C^n$, $\B$. 
We denote by $H^*$ the space of functions $f\in H$ which have boundary values 
$f(\z)=\lim_{r\nearrow 1} f(r\z)$ a.e. on the unit sphere $\bB$.

For $0<p<\infty$ and  $\omega$ a weight on $\bB$, that is a function 
$\omega\in L^1$ satisfying  $\omega>0$ a.e., let $L^p(\omega)=L^p(\bB,\omega d\sigma)$, 
where $d\sigma$ denotes the normalized Lebesgue measure on the unit sphere $\bB$. 
If $\omega=1$, we use $L^p$ to denote $L^p(\bB,d\sigma)$.  

Denote by 
$$
H^p(\omega)=\{f\in H^*: \|f\|_{H^p(\omega)}=\|f\|_{L^p(\omega)}<\infty\}.
$$

If $1< p<\infty$ and $\omega$ in the Muckenhoupt class $A_p=A_p(\bB)$, 
which will be defined in the Section \ref{sec:preliminaries},  
there exist other characterizations of the space $H^p(\omega)$
(see for instance \cite[Section 5]{Lu}, \cite{Ca-Or4} and the references therein). 
In this paper, we consider the equivalent norm given in terms of the Littlewood-Paley function.

Let $L^{p,2}(\omega)$ be the mixed-norm space of measurable functions $\varphi$ on $\B$ satisfying 
$$
\|\varphi\|_ {L^{p,2}(\omega)}^p
=\int_{\bB}\left(\int_0^1|\varphi(r\z)|^2\frac{ 2nr^{2n-1} dr}{1-r^2}\right)^{p/2}\omega(\z)d\sigma(\z)<\infty,
$$

We denote by $F^{p,2}_0(\omega)$ the weighted Triebel-Lizorkin space of holomorphic functions $f$ on $\B$ satisfying 
\begin{equation}\label{eqn:TL}
\|f\|_{F^{p,2}_0(\omega)}= \left\|(1-r^2)\left(I+\frac{R}{n}\right)f(r\z)\right\|_{ L^{p,2}(\omega)}<\infty.
\end{equation}
Here $I$ denote the identity operator and  $R=\sum_{j=1}^n z_j\frac{\p}{\p z_j}$.
If $\omega=1$, we simply write $H^p$ and $F^{p,2}_0$.
In this case, it is well know that for $0<p<\infty$ these spaces are isomorphic  and 
this isomorphism still holds if we replace the operator  $I+\frac{R}{n}$ by $\alpha I+\beta R$ with $\alpha,\beta>0$ 
(see for instance  \cite{Pav}, \cite{Ah-Br} and the references therein).
If $1<p<\infty$ and $\omega\in A_p$, these results are also true for the spaces $H^p(\omega)$ and 
$F^{p,2}_0(\omega)$ (see \cite{Ca-Or4}).
In particular, there exist constants $c,C>0$ depending on $p$, $n$ and $\omega$, such that 
$$c\|f\|_{H^p(\omega)}\leq \|f\|_{F^{p,2}_0(\omega)}\leq C \|f\|_{H^p(\omega)}.$$

Denote by $\Ca$ the Cauchy integral operator on $L^p$ and by $\Be$ the Bergman integral operator on 
$L^p(d\nu)=L^p(\B,d\nu)$, $p\ge 1$, given by
$$
\Ca(\psi)(z)=\int_{\bB}\psi(\z)\Ca(z,\z)d\sigma(\z),\quad\text{and}\quad \Be(\varphi)(z)=\int_{\B}\varphi(w)\Be(z,w)d\nu(w),
$$
where
$$ 
\Ca(z,\zeta)=\frac{1}{(1-z\overline \z)^{n}}\quad\text{and}\quad 
\Be(z,w)=\frac{1}{(1-z\overline w)^{n+1}}=\left(I+\frac{R}{n}\right)\Ca(z,w).
$$
Here, $d\nu$ denotes the normalized Lebesgue measure on $\B$.

In \cite{Hu-Mu-Whe} and \cite{Lee-Rim}, the authors proved that  if $1<p<\infty$, 
then the Cauchy operator is bounded on $L^p(\omega)$  if and only if $\omega\in A_p$. 
Thus, it is then natural to consider this problem for   the norm $\|\Ca:L^p(\omega)\to F^{p,2}_0(\omega)\|$ 
with weights $\omega\in A_p$, $p>1$, where we recall that $F^{p,2}_0(\omega)$ is normed by \eqref{eqn:TL}.
Using adequate pairings, it  is easy to check  (see for instance \cite{Co} for the unweighted case and 
Section \ref{sec:preliminaries} in general) that the adjoint operator of 
$(1-|z|^2)\left(I+\frac{R}{n}\right)\Ca:L^p(\omega)\to L^{p,2}(\omega)$ is the Bergman operator
 $\Be:L^{p',2}(\omega')\to H^{p'}(\omega')$, where $p'$ is the conjugate exponent of $p$ and $\omega'=\omega^{1-p'}$.

The main result of this paper is the following:

\begin{thm}\label{thm:Bergman}
Let $1<p<\infty$ and let $\omega $ be a weight on $\bB$. Then, $\Be$ is a bounded operator from 
$L^{p,2}(\omega)$ to $H^p(\omega)$  if and only if $\omega \in A_p$. 
In this case, we have 
\begin{equation}\label{eqn:Bergman}
\|\Be :L^{p,2}(\omega)\to H^p(\omega)\|\leq C(p,n) [\omega]_{A_p}^{\max\{1,1/(2(p-1))\}}
\end{equation}
and the estimate is sharp.
\end{thm}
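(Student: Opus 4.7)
The plan is to pass to the dual formulation and reduce to a sharp weighted square function estimate on the sphere. By the pairing recalled in the introduction, $\Be:L^{p,2}(\omega)\to H^{p}(\omega)$ is the Banach-space adjoint of
\begin{equation*}
T\psi(r\z)=(1-r^2)\left(I+\frac{R}{n}\right)\Ca\psi(r\z)=(1-r^2)\int_{\bB}\frac{\psi(\eta)}{(1-r\z\bar\eta)^{n+1}}d\sigma(\eta),
\end{equation*}
viewed as a map $L^{p'}(\omega')\to L^{p',2}(\omega')$, where $\omega'=\omega^{1-p'}$. Since $[\omega']_{A_{p'}}=[\omega]_{A_p}^{1/(p-1)}$, the target exponent $\max\{1,1/(2(p-1))\}$ for $\Be$ translates under duality into $\max\{1/2,1/(q-1)\}$ for $T$ with $q=p'$. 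Hence \eqref{eqn:Bergman} is equivalent to the sharp Lusin-type bound
\begin{equation*}
\|T:L^{q}(\nu)\to L^{q,2}(\nu)\|\leq C(q,n)[\nu]_{A_q}^{\max\{1/2,1/(q-1)\}},\qquad q>1,\ \nu\in A_q,
\end{equation*}
which is naturally phrased in terms of the square function $g(\psi)(\z)=\bigl(\int_0^1|T\psi(r\z)|^2(1-r^2)^{-1}r^{2n-1}dr\bigr)^{1/2}$.

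To establish this bound I would exploit that $\bB$ with the nonisotropic distance $d(\z,\eta)=|1-\z\bar\eta|^{1/2}$ is a space of homogeneous type, and use the finite family of adjacent dyadic grids from \cite{Ka}. The size estimate $|1-r\z\bar\eta|\sim\max\{1-r,|1-\z\bar\eta|\}$ shows that for each fixed $r$ the kernel of $T$ behaves like a Calder\'on--Zygmund kernel at scale $\sqrt{1-r}$; organizing $r$ by the dyadic scale $2^{-k}\sim\sqrt{1-r}$ reduces $g(\psi)$ to a dyadic square function on $\bB$. A Calder\'on--Zygmund stopping-time argument run simultaneously on the finitely many Kairema grids should then yield a sparse family $\mathcal{F}$ with
\begin{equation*}
g(\psi)(\z)^2\lesssim\sum_{Q\in\mathcal{F}}\Bigl(\frac{1}{\sigma(Q)}\int_Q|\psi|\,d\sigma\Bigr)^2\chi_Q(\z),
\end{equation*}
and the known sharp two-weight bound for such sparse square operators produces precisely the exponent $\max\{1/2,1/(q-1)\}$, closing the upper estimate.

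The necessity of $\omega\in A_p$ is an elementary testing argument: applying $\Be$ to $\varphi=\Be(\cdot,w_0)\chi_{T(B)}$ for Carleson tents $T(B)$ over nonisotropic balls $B\subset\bB$ with $1-|w_0|\sim\sigma(B)^{1/n}$, and using the lower bound $|\Be(z,w_0)|\sim(1-|w_0|)^{-(n+1)}$ on $T(B)$, should return the two-weight $A_p$ inequality. Sharpness of the exponent would be checked by plugging in the one-parameter family of power weights $\omega_s(\z)=|1-\z\bar e_1|^s$ near its extreme admissible values, together with matching extremal densities $\varphi_s$, and comparing $[\omega_s]_{A_p}$ directly with both sides of \eqref{eqn:Bergman}.

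The step I expect to be hardest is the sparse domination above. On $\R^n$ the analogous estimate for the Lusin square function rests on convolution structure and Wilson's intrinsic square function, neither of which is available on the sphere. The remedy, in the spirit of \cite{Ka} and \cite{An-Va}, is to use the adjacent dyadic grids to bypass the lack of convolution and to extract sharp off-diagonal decay from the explicit kernel $(1-r\z\bar\eta)^{-(n+1)}$, exploiting also the cancellation provided by the factor $(1-r^2)$; making this precise is the main technical content of the proof.
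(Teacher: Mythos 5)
Your overall architecture for the upper bound is the same as the paper's: dualize to the square function $G(\psi)=\bigl(\int_0^1(1-r^2)|(I+R/n)\Ca\psi(r\z)|^2\,dr\bigr)^{1/2}$ and try to dominate it by sparse square operators over the adjacent Kairema grids. Two points in your plan, however, are either too optimistic or genuinely gapped, and your sharpness route is a different (but in itself legitimate) one than the paper's.

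First, on the sparse domination. The clean form you write, $g(\psi)^2\lesssim\sum_{Q\in\mathcal F}(\psi_Q)^2\chi_Q$, is not what the kernel estimate delivers. The local mean-oscillation bound one actually gets (the paper's Lemma \ref{lem:mainestimate}, proved via Tchoundja's H\"older estimate for the kernel difference plus a weak-$(1,1)$ Calder\'on--Zygmund splitting for $G$, Lemma \ref{lem:wilson}) has \emph{tails}: $\omega_\lambda(G(\psi)^2;Q)\lesssim\sum_{k\ge 0}2^{-k/2}\bigl(|2^kB(Q)|^{-1}\int_{2^kB(Q)}|\psi|\bigr)^2$. Running Lerner's local decomposition (Theorem \ref{thm:homoglerner}) then produces a family $\mathcal T_l^{\mathcal S}$ indexed by the dilate $2^l$, with $\|\mathcal T_l^{\mathcal S}\|_{L^3(\omega)}\lesssim l^{1/2}[\omega]_{A_3}^{1/2}$, and the $2^{-l/4}$ decay in the sum is exactly what absorbs that $l^{1/2}$. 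The paper then proves the sharp $L^3(\omega)$ bound at $p_0=3$ and transfers it via Duoandikoetxea extrapolation (Theorem \ref{thm:duoandikoetxea}) to all $p$; this is where the $\max\{1/2,1/(p-1)\}$ shape of the exponent comes from, and your plan should make that step explicit rather than invoking a ready-made ``known sharp bound for sparse square operators.''

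Second, the necessity argument has a concrete gap. You propose $\varphi=\Be(\cdot,w_0)\chi_{T(B)}$ and claim $|\Be(z,w_0)|\sim(1-|w_0|)^{-(n+1)}$ on $T(B)$; but $\Be(z,w_0)=(1-z\bar w_0)^{-(n+1)}$ oscillates in argument as $z$ ranges over $T(B)$, so applying $\Be$ a second time can produce cancellation and you cannot lower-bound $|\Be(\varphi)|$ on the same tent. The paper's testing argument (Proposition \ref{prop:neccond}) instead tests with a nonnegative bump on a square $S_a$ and lower-bounds $|\Be(\chi_a\varphi)|$ on a \emph{separated} square $S_b$ with $|1-b\bar a|\approx\kappa(1-|a|)$, where the kernel difference $|\Be(z,w)-\Be(z,a)|$ is small compared to $|\Be(z,a)|$; without this separation the lower bound fails.

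Third, the sharpness. You propose the classical direct route: power weights $|1-\z\bar e_1|^s$ with matching extremal densities. The paper takes the ``optimal exponents without examples'' route (Fefferman--Pipher, Luque--P\'erez--Rela): via the Rubio de Francia algorithm (Lemma \ref{lem:Du}, Corollary \ref{cor:Du}) a sharp power of $[\omega]_{A_p}$ at a single $p_0$ forces a matching sharp power of $p$ in the \emph{unweighted} estimate $\|\Be:L^{p,2}\to H^p\|$, which is then pinned down by the explicit one-dimensional test functions $f_{\delta,p}(z)=(\tfrac{1+z}{1-z})^{\delta/p}$ and the exact norm $\|\Ca:L^p\to H^p\|=1/\sin(\pi/p)$. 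This decouples the weighted and unweighted computations and avoids the delicate $L^{p,2}(\omega_s)$ norm calculations your route would require. Both approaches can in principle work, but if you pursue power weights you will need to handle the mixed-norm $L^{p,2}(\omega_s)$ carefully and track constants near both endpoints $s\to -1$ and $s\to p-1$ to cover the whole $\max\{1,1/(2(p-1))\}$.
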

Throughout the paper, a sharp estimate will mean that the exponent of $[\omega]_{A_p}$ can not be replaced by a smaller one.

For the unweighted case we have:

\begin{thm} \label{cor:Bergman}
If $1<p<\infty$, then $\|\Be :L^{p,2}\to H^p\|\leq C(n)\max\{p,\sqrt{p'}\}$. This estimate is also sharp.
\end{thm}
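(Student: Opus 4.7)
The plan is to read off the unweighted bound from the proof of Theorem~\ref{thm:Bergman} by keeping track of the $p$-dependence (the hypothesis $\omega\equiv 1$ only gives $[\omega]_{A_p}=1$, so \eqref{eqn:Bergman} collapses to a constant $C(p,n)$ whose explicit growth is what we must pin down). I would work via the duality identity recalled in the introduction,
$$\|\Be:L^{p,2}\to H^p\|=\|T_q:L^q\to L^{q,2}\|,\qquad q=p',$$
where $T_q\psi(z):=(1-|z|^2)(I+R/n)\Ca\psi(z)$ is the Littlewood--Paley $g$-type transform of the Cauchy integral. The target bound $\max\{p,\sqrt{p'}\}$ reads as $\max\{q',\sqrt{q}\}$ on the Cauchy side, and one naturally splits at $q=2$.

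For $q\ge 2$ (equivalently $p\le 2$) the aim is $\|T_q\|\lesssim\sqrt{q}$. Since $T_q$ takes values in $L^2(dr/(1-r))$, a space of type $2$, the usual square-function (Chang--Wilson--Wolff-type) improvement applies to the vector-valued singular integral $T_q$, producing $\sqrt{q}$ in place of the generic Calder\'on--Zygmund constant $q$. For $q<2$ (equivalently $p\ge 2$) the aim is the classical $\|T_q\|\lesssim q'$: the square-function gain is unavailable in this range, so the correct growth comes from a vector-valued Calder\'on--Zygmund estimate on the homogeneous sphere, obtainable either by a weak-type $(1,1)$ endpoint plus interpolation, or by sparse domination built from the adjacent dyadic systems of \cite{Ka} used in the proof of Theorem~\ref{thm:Bergman}, with constants tracked carefully through that argument.

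Sharpness would be obtained by testing against normalized powers of the reproducing kernel, $\varphi_a(w)=(1-|a|^2)^\alpha(1-a\bar w)^{-\beta}$ with $|a|\to 1$, choosing the exponents so that $\|\varphi_a\|_{L^{p,2}}\sim 1$; the standard asymptotics of $\int_\bB d\sigma(\z)/|1-a\bar\z|^s$ then force $p$ (in the range $p\ge 2$) and, by a dual test on the Cauchy side, $\sqrt{p'}$ (in the range $p<2$) to be optimal. The main obstacle is obtaining the factor $p$ in the large-$p$ regime without overshooting: any composition-style estimate that decouples the Szeg\H o projection from the $g$-function costs an extra $\sqrt{p}$, so one must treat $T_q$ as a single vector-valued singular integral rather than a product of two sharp bounds.
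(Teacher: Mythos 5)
Your upper-bound plan is a genuinely different route from the paper's. The paper does not reprove the unweighted growth from scratch: it feeds the weighted estimate of Theorem~\ref{thm:Bergman} into a Rubio de Francia argument (Lemma~\ref{lem:Du} and Corollary~\ref{cor:Du}), choosing the pivot $p_0=3/2$ so that the two branches $p^{\beta}$ and $(p')^{\beta(p_0-1)}$ come out as $p$ and $\sqrt{p'}$. That is shorter and self-contained, because the weighted bound $[\omega]_{A_{3/2}}$ already encodes the sharp $p$-dependence, whereas your plan requires establishing the Chang--Wilson--Wolff gain for $q\ge 2$ and a weak-$(1,1)$ plus interpolation (or sparse) argument for $q<2$ for the specific operator $T_q$ on $\bB$; both steps are plausible but are not worked out, and you yourself flag the danger of overshooting by $\sqrt{p}$ if the Szeg\H{o} projection and the square function are decoupled.

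The sharpness part has a real gap. Normalized reproducing-kernel powers $\varphi_a(w)=(1-|a|^2)^{\alpha}(1-\overline{a}w)^{-\beta}$ are essentially fixed points of the Cauchy and Bergman projections (and their anti-holomorphic counterparts are annihilated), so testing $\Be$ or $\Ca$ against them only produces the $H^p\!\leftrightarrow\!F^{p,2}_0$ Littlewood--Paley equivalence constant; they do not see the conjugate-function phenomenon that drives the factor $p$. This is the same reason the sharp norm $\|\Ca:L^p\to H^p\|=1/\sin(\pi/p)$ of Hollenbeck--Verbitsky cannot be obtained from reproducing-kernel tests. The paper instead works in $n=1$ with $f_{\delta,p'}(z)=\bigl((1+z)/(1-z)\bigr)^{\delta/p'}$ and exploits the boundary identity $|v_{\delta,p'}|=\tan\!\bigl(\tfrac{\delta\pi}{2p'}\bigr)\,u_{\delta,p'}$ between the real and imaginary parts (Lemma~\ref{lem:f1}), so that $2\Ca(v)=f-\overline{f(0)}$ converts the $\tan$ factor into the needed $p$ or $p'$ growth; the range $p>3/2$ is then reduced directly to the Hollenbeck--Verbitsky constant via the uniform equivalence $\|\cdot\|_{H^{p'}}\approx\|\cdot\|_{F^{p',2}_0}$ for $1<p'<3$, and the range $1<p\le 3/2$ uses the $\sqrt{p'}$ factor hidden in item~\eqref{item:f12} of Lemma~\ref{lem:f1}. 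You would need to replace your reproducing-kernel test by something with this real/imaginary-part structure to make the sharpness argument go through.
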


One natural question that arise from Theorem \ref{thm:Bergman}, is the study of the norm of  the operator 
$\Be:L^{p,2}(\omega)\to F^{p,2}_0(\omega)$.
For this case we have:

\begin{thm} \label{thm:Bergman2}
Let $1<p<\infty$ and $\omega $ a weight on $\bB$. Then, $\Be$ is a bounded operator from 
$L^{p,2}(\omega)$ to $F^{p,2}_0(\omega)$ if and only if $\omega\in A_p$.

In this case, we have that 
$$
\|\Be:L^{p,2}(\omega)\to F^{p,2}_0(\omega)\| \le C(p,n) [\omega]_{A_p}^{\max\{1,1/(p-1)\}}.
$$ 
\end{thm}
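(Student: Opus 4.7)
The plan is as follows. Starting from the definition~\eqref{eqn:TL}, we have
\[
\|\Be\varphi\|_{F^{p,2}_0(\omega)} = \big\|(1-|z|^2)(I+R/n)\Be\varphi(z)\big\|_{L^{p,2}(\omega)}.
\]
A direct computation with $\Be(z,w)=(1-z\bar w)^{-(n+1)}$ yields
\[
(I+R_z/n)\Be(z,w) = -\frac{1}{n}(1-z\bar w)^{-(n+1)} + \frac{n+1}{n}(1-z\bar w)^{-(n+2)},
\]
whose modulus is bounded by $C_n|1-z\bar w|^{-(n+2)}$. Consequently the upper estimate in Theorem~\ref{thm:Bergman2} reduces to proving the $L^{p,2}(\omega)$-boundedness of the positive kernel operator
\[
S\varphi(z) = \int_\B |\varphi(w)| \, \frac{1-|z|^2}{|1-z\bar w|^{n+2}}\,d\nu(w),
\]
with the same norm bound.

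For sufficiency, the plan is to adapt the sparse/dyadic technique used in the proof of Theorem~\ref{thm:Bergman}. With the adjacent dyadic grids on the homogeneous space $\bB$ from \cite{Ka}, we decompose $S\varphi(r\z)$ according to the boundary cube containing $\z$ at scale $\approx 1-r$ together with its ancestors; the additional factor $(1-|z|^2)$ in the numerator, compared with the Cauchy kernel, produces a geometrically decaying dyadic sum and leads, after a Minkowski-type rearrangement of the radial $L^2$-integral, to a pointwise estimate of the form
\[
\bigg(\int_0^1 |S\varphi(r\z)|^2 \frac{2nr^{2n-1}\,dr}{1-r^2}\bigg)^{1/2} \lesssim \sum_{i=1}^N A^i(g)(\z),
\]
where
\[
g(\xi) = \bigg(\int_0^1 |\varphi(s\xi)|^2 \frac{2ns^{2n-1}\,ds}{1-s^2}\bigg)^{1/2}
\]
is the radial square-function of $\varphi$ on $\bB$ and each $A^i$ is a positive sparse operator associated with one of the adjacent grids. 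Since $\|g\|_{L^p(\omega)}=\|\varphi\|_{L^{p,2}(\omega)}$, the sharp sparse-operator bound $\|A^i\colon L^p(\omega)\to L^p(\omega)\|\lesssim [\omega]_{A_p}^{\max\{1,1/(p-1)\}}$ on the homogeneous space $\bB$ closes the sufficiency.

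For necessity, the boundedness of $\Be\colon L^{p,2}(\omega)\to F^{p,2}_0(\omega)$ is tested on a suitable normalized family of functions supported on Carleson tents $T(B)$ above boundary balls $B\subset\bB$. With a careful choice of test family, both the inequality and its natural dual (via the adjoint pairing identified in the introduction) yield $\omega(B)^{1/p}\,\omega^{1-p'}(B)^{1/p'}\lesssim \sigma(B)$ uniformly in $B$, which is exactly the $A_p$ condition on $\omega$.

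The main obstacle lies in the sparse-domination step: to achieve the exponent $\max\{1,1/(p-1)\}$ rather than something larger, the pointwise bound must be organized so that each sparse operator acts on the \emph{scalar} function $g$ on $\bB$, not on a quantity still depending on the radial variable. The extra factor $(1-|z|^2)$ in the kernel of $S$ is what enables this reduction; conversely, this is also why the exponent obtained here is larger than the one in Theorem~\ref{thm:Bergman}, where the holomorphy of $\Be\varphi$ permits a finer $L^2$-splitting in the radial direction that is not available when working directly with $F^{p,2}_0(\omega)$.
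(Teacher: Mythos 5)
Your proposal takes a genuinely different route than the paper, and it contains a critical unverified step. The paper reduces, by duality, to studying $\Qo = (1-|z|^2)(I+R/n)\Be$ on $L^{p,2}(\omega)$; it then observes that $L^{2,2}(\omega) = L^2\bigl(\B', \Omega(z)(1-|z|^2)^{-1}\,d\nu\bigr)$ where $\B'=\overline{\B}\setminus\{0\}$ and $\Omega(z)=\omega(z/|z|)$, checks that $\Omega\in A_2(\B')$ with $[\Omega]_{A_2(\B')}\lesssim[\omega]_{A_2}$ (Lemma~\ref{lem:OmA2}), and deduces the $p=2$ case from the sharp $A_2$-estimate for Calder\'on--Zygmund operators on homogeneous spaces of \cite{An-Va} (Corollary~\ref{cor:CZ}); finally it extrapolates via Theorem~\ref{thm:duoandikoetxea}. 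The crucial point is that the radial $L^2$-integration is absorbed into the measure on the space $\B'$, so the whole argument lives at the level of a \emph{scalar} $L^2$-theory on $\B'$, never requiring one to interchange the radial $L^2$-integrals of $\varphi$ and $\Be\varphi$.

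Your plan, by contrast, hinges on the pointwise sparse domination
\[
\Bigl(\int_0^1 |S\varphi(r\z)|^2 \tfrac{2nr^{2n-1}\,dr}{1-r^2}\Bigr)^{1/2} \lesssim \sum_{i=1}^N A^i(g)(\z),
\qquad g(\xi)=\Bigl(\int_0^1 |\varphi(s\xi)|^2\tfrac{2ns^{2n-1}\,ds}{1-s^2}\Bigr)^{1/2},
\]
which is not established and, as stated, looks very delicate. The announced ``Minkowski-type rearrangement of the radial $L^2$-integral'' in fact fails: applying Minkowski's integral inequality to pull the $L^2(u\,du)$-norm past the $(\eta,v)$-integral and then Cauchy--Schwarz in $v$ gives, up to constants, $\int_\bB g(\eta)\,|1-\z\overline{\eta}|^{-n}\,d\sigma(\eta)$, which diverges. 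Applying Cauchy--Schwarz in $v$ first and then integrating in $u$ gives the square function $\bigl(\int_0^1 u\,P_u(g)(\z)^2\,du\bigr)^{1/2}$ with $P_u(g)(\z)=\int_\bB g(\eta)(u+|1-\z\overline\eta|)^{-(n+1)}\,d\sigma(\eta)$, where $uP_u(g)\lesssim M(g)$ but $\int_0^1 (uP_u(g))^2\,u^{-1}du$ diverges. In both cases the loss comes from discarding the $L^2$-orthogonality in $r$; a pointwise bound by averages of the scalar function $g$ alone therefore cannot be obtained by these standard rearrangements, and you give no alternative mechanism. (Note also that $S\varphi$ is not determined by $g$ --- two nonnegative profiles with the same radial $L^2$-square function can give different $S\varphi$'s --- so a priori it is not even clear the left side is controlled by any functional of $g$.) If you want a sparse route, the domination should be organized at the level of the operator $\Qo$ acting on $L^2(\B',\Omega)$, with sparse families of ``Carleson squares'' in $\B'$, which is essentially the content of the CZ/$A_2$ theorem on $\B'$ invoked by the paper; the remainder of your proposal (kernel modulus bound, extrapolation, sparse exponent $\max\{1,1/(p-1)\}$, and the necessity via testing on tents) is sound.
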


If the weight $\omega$ is in a more regular class, then it can be obtained a sharper estimate 
of the norm of the operator. Namely, we have the following result:

\begin{thm} \label{thm:Bergman3}
Let $1<p<\infty$.
\begin{enumerate}

	\item If $p>2$ and  $\omega\in A_{1}$, then 
$$
\|\Be:L^{p,2}(\omega)\to F^{p,2}_0(\omega)\|\le C(p,n) [\omega]_{A_1}^{1/2}.
$$

\item If $1<p<2$ and $\omega' \in A_{1}$, then 
$$
\|\Be:L^{p,2}(\omega)\to F^{p,2}_0(\omega)\|\le C(p,n) [\omega']_{A_1}^{1/2}
$$
 In both cases, the estimate is  sharp.
 \end{enumerate}
 \end{thm}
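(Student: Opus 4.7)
The plan is to improve the bound of Theorem~\ref{thm:Bergman2} by extracting a Lusin-type square function structure from $\Be$, which gives a sharp exponent $1/2$ on $[\omega]_{A_p}$; together with the monotonicity $[\omega]_{A_p}\le[\omega]_{A_1}$ for $\omega\in A_1$, this produces the desired bound $[\omega]_{A_1}^{1/2}$ in part (i). Setting $T_\Be\varphi(r\zeta)=(1-r^2)(I+\tfrac{R}{n})\Be\varphi(r\zeta)$, so that $\|\Be\varphi\|_{F^{p,2}_0(\omega)}=\|T_\Be\varphi\|_{L^{p,2}(\omega)}$, the goal of (i) is to show
\[
G(\zeta):=\Bigl(\int_0^1|T_\Be\varphi(r\zeta)|^2\tfrac{2nr^{2n-1}dr}{1-r^2}\Bigr)^{1/2}\ \text{satisfies}\ \|G\|_{L^p(\omega)}\le C[\omega]_{A_1}^{1/2}\|F\|_{L^p(\omega)},
\]
where $F(\eta)=\bigl(\int_0^1|\varphi(\rho\eta)|^2\tfrac{2n\rho^{2n-1}d\rho}{1-\rho^2}\bigr)^{1/2}$, so that $\|F\|_{L^p(\omega)}=\|\varphi\|_{L^{p,2}(\omega)}$.

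For (i), I plan to establish a sparse $L^2$-square-function domination
\[
G(\zeta)^2\le C\sum_{Q\in\mathcal S}\Bigl(\frac{1}{\sigma(Q)}\int_Q F^2\,d\sigma\Bigr)\chi_Q(\zeta),
\]
with $\mathcal S$ a sparse family of dyadic cubes on $\bB$ from \cite{Ka}. The construction is standard in spirit: partition $\B$ into the non-isotropic Carleson tents over each dyadic cube $Q$, use the size estimate $|(I+\tfrac{R}{n})\Be(r\zeta,w)|\le C|1-r\zeta\bar w|^{-(n+2)}$ together with Cauchy--Schwarz in the radial variables of both $z=r\zeta$ and $w$ to bound the contribution of each tent, and perform a stopping-time extraction on $\bB$ to transfer from the dyadic decomposition to a sparse family. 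Once the pointwise domination is in hand, the sharp weighted inequality for sparse $L^2$-square functions (of Lerner/Wilson type) yields, for $p\ge2$ and $\omega\in A_p$, the factor $[\omega]_{A_p}^{1/2}$, and the elementary bound $[\omega]_{A_p}\le[\omega]_{A_1}$ closes the estimate. For part (ii), I pass to the adjoint of $T_\Be$ with respect to the pairing $\langle u,v\rangle=\int_\bB\int_0^1u\bar v\,\tfrac{2nr^{2n-1}}{1-r^2}dr\,d\sigma$: the adjoint $T_\Be^*$ satisfies the same kernel-size bound and acts $L^{p',2}(\omega')\to L^{p',2}(\omega')$, so (i) applied to $T_\Be^*$ with $p'>2$ and $\omega'\in A_1$ gives the stated $[\omega']_{A_1}^{1/2}$ bound.

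Sharpness in both parts is verified by testing on the one-parameter family $\omega_\alpha(\zeta)=|1-\zeta_0\bar\zeta|^{\alpha}$ with $\alpha\to-n^+$ (so that $[\omega_\alpha]_{A_1}\sim(n+\alpha)^{-1}\to\infty$) and on densities $\varphi$ concentrating in thin non-isotropic tents over $\zeta_0\in\bB$; the Forelli--Rudin integral asymptotics give the matching lower bound on $\|\Be\varphi\|_{F^{p,2}_0(\omega_\alpha)}/\|\varphi\|_{L^{p,2}(\omega_\alpha)}$ of order $[\omega_\alpha]_{A_1}^{1/2}$. The main obstacle is the sparse $L^2$-domination itself: the two-parameter nature of $T_\Be$ (boundary point plus radial coordinate) together with the fact that the sparse family lives on $\bB$ alone forces a delicate interleaving of the Cauchy--Schwarz in the radial variables with the stopping-time argument, and this is precisely where the cruder vector-valued Calder\'on--Zygmund argument underlying Theorem~\ref{thm:Bergman2} is refined into the sharper $1/2$-exponent.
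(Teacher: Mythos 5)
Your route diverges from the paper's at every step, and the central step contains a genuine gap. The paper does not prove Theorem~\ref{thm:Bergman3} by sparse domination. Its key tool is Proposition~\ref{prop:BtoFA1}: for $p>2$ and $\omega\in A_{p/2}$, one has $\|\Qo:L^{p,2}(\omega)\to L^{p,2}(\omega)\|\lesssim [\omega]_{A_{p/2}}^{1/2}$. The mechanism is a sub-Schur test: since $|\Qo|(1)\approx 1$, Jensen gives $|\Qo(\varphi)|^2\lesssim |\Qo|(\varphi^2)$; the radial integral of $|\Qo|(\varphi^2)$ is controlled by $\int_\B\varphi^2(w)\,|1-\z\overline w|^{-(n+1)}d\nu(w)$; then one dualizes in $L^{p/2}(\omega)$ and invokes the Rubio de Francia algorithm in the form of \cite[Lemma 3]{Dr-Gra-Pe-Pe}, which majorizes the test function $\psi$ by $v$ with $[v\omega]_{A_1}\lesssim [\omega]_{A_{p/2}}$, converting the Hardy--Littlewood maximal function to a pointwise multiplication by $[\omega]_{A_{p/2}}$. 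Part (i) then follows from $[\omega]_{A_{p/2}}\le [\omega]_{A_1}$, part (ii) by the self-duality \eqref{eqn:dualQ}, and sharpness by the extrapolation Lemma~\ref{lem:Du} together with the unweighted sharp norm in Theorem~\ref{cor:Bergman3} --- not by a power-weight computation.

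The gap in your proposal is the claim that the sparse square-function bound gives $[\omega]_{A_p}^{1/2}$ for all $p\ge 2$. The sharp sparse $L^2$-square-function exponent on $L^p(\omega)$, $\omega\in A_p$, is $\max\{1/2,1/(p-1)\}$ (this is exactly what the paper proves in Sections 4--5 for the Cauchy operator, via Lemma~\ref{thm.lernerdec} and Lemma~\ref{lem:TlS}), which equals $1/2$ only when $p\ge 3$; for $2<p<3$ it is $1/(p-1)>1/2$. Moreover, the domination you actually wrote, $G(\zeta)^2\le C\sum_{Q\in\mathcal S}\langle F^2\rangle_Q\chi_Q(\zeta)=C\,\mathcal A_{\mathcal S}(F^2)(\zeta)$, is an estimate against the sparse \emph{averaging} operator acting on $F^2$, not against a sparse square function applied to $F$. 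Feeding this into $\|\mathcal A_{\mathcal S}\|_{L^{p/2}(\omega)}\lesssim [\omega]_{A_{p/2}}^{\max\{1,\,2/(p-2)\}}$ yields the $[\omega]_{A_1}^{1/2}$ bound only for $p\ge 4$; for $2<p<4$ the exponent degrades to $1/(p-2)>1/2$. To close the argument you would need precisely the $A_{p/2}$-structure that the paper exploits directly in Proposition~\ref{prop:BtoFA1}, rather than passing through an $A_p$ sparse-operator bound. The paper's sub-Schur-plus-Rubio-de-Francia argument is both more elementary and exactly calibrated to produce the $A_{p/2}$ dependence, avoiding sparse domination entirely for this theorem.
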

 
For the unweighted case, we obtain:
\begin{thm} \label{cor:Bergman3}
If $1<p<\infty$, then $\|\Be :L^{p,2}\to F^{p,2}_0\|\leq C(n)\max\{\sqrt{p},\sqrt{p'}\}$. This estimate is also sharp.
\end{thm}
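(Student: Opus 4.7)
The plan is to obtain Theorem \ref{cor:Bergman3} as an immediate specialisation of Theorem \ref{thm:Bergman3} to the trivial weight $\omega\equiv 1$, complemented by a direct analysis of the extremal cases to establish sharpness.

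For the upper bound in the range $p\ge 2$, take $\omega\equiv 1\in A_1$ so that $[\omega]_{A_1}=1$ in Theorem \ref{thm:Bergman3}(i); this already yields $\|\Be:L^{p,2}\to F^{p,2}_0\|\le C(p,n)$, and the task reduces to verifying $C(p,n)=O(\sqrt p)$ as $p\to\infty$. I would trace the $p$-dependence through the proof of Theorem \ref{thm:Bergman3}(i), in which the operator $(1-r^2)(I+R/n)\Be$ is dominated by the composition of a sparse operator (built from the adjacent sparse families of \cite{Ka}) with a Lusin-type area function. With the trivial $A_1$-weight the sparse piece contributes only $O(1)$, so the $p$-growth is inherited from the sharp unweighted $L^p$-norm of the area function, which is of order $\sqrt p$ on the range $p\ge 2$.

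For the range $1<p\le 2$ the parallel application of Theorem \ref{thm:Bergman3}(ii) with $\omega'\equiv 1\in A_1$ supplies the companion bound $C(p,n)=O(\sqrt{p'})$. Taking the maximum over the two regimes gives the upper estimate $\|\Be:L^{p,2}\to F^{p,2}_0\|\le C(n)\max\{\sqrt p,\sqrt{p'}\}$.

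For sharpness, I would test on normalised reproducing-kernel-type functions of the form $\varphi_a(w)=(1-|a|^2)^\alpha(1-\langle w,a\rangle)^{-\beta}$ with $a\in\B$, $|a|\to 1^-$, and parameters $\alpha,\beta>0$ tuned in $p$. Both $\|\varphi_a\|_{L^{p,2}}$ and $\|\Be\varphi_a\|_{F^{p,2}_0}$ admit explicit Forelli--Rudin expansions whose leading order is a power of $1-|a|^2$ multiplied by a $p$-dependent Beta-function factor; the ratio of these quantities, after the correct choice of $\alpha$ and $\beta$ in $p$, should recover the $\sqrt p$ (respectively $\sqrt{p'}$) lower bound as $p\to\infty$ (respectively $p\to 1^+$). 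The principal obstacle is isolating the precise $p$-dependence of the Beta-function factors, rather than merely the qualitative order in $1-|a|^2$; this is analogous to, and can be modelled on, the sharpness constructions for the weighted Bergman projection in \cite{Po-Re}.
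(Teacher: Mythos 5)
Your proposal takes a genuinely different route from the paper, and while the basic plan is not absurd, there are several concrete problems.

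For the upper bound, you propose to specialize Theorem \ref{thm:Bergman3} to $\omega\equiv 1$ and then ``trace the $p$-dependence through the proof of Theorem \ref{thm:Bergman3}(i), in which the operator $(1-r^2)(I+R/n)\Be$ is dominated by the composition of a sparse operator \dots with a Lusin-type area function.'' This mischaracterizes the actual proof: the proof of Theorem \ref{thm:Bergman3}(i) is Proposition \ref{prop:BtoFA1}, which is a Schur-type argument. One bounds $|\Qo(\varphi)|^2\lesssim|\Qo|(\varphi^2)$, dualizes against $L^{(p/2)'}(\omega)$, and invokes a Rubio de Francia $A_1$-majorant from \cite{Dr-Gra-Pe-Pe}; no sparse operators appear there. (The sparse machinery appears only in the proof of Theorem \ref{thm:Bergman}, which is a different estimate targeting $H^p(\omega)$, not $F^{p,2}_0(\omega)$.) It is plausible that if you instead track constants through Proposition \ref{prop:BtoFA1} with $\omega=1$ you do get $O(\sqrt p)$---the factor $\sqrt p$ comes out of the square root after the dual pairing and the growth of $\|M\|_{L^{(p/2)'}}$---but that is not the calculation you describe, and the paper never performs it.

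The paper's own route is much shorter and is the one you should notice is available: Theorem \ref{cor:Bergman} has already given $\|\Be:L^{p,2}\to H^p\|\leq C(n)\max\{p,\sqrt{p'}\}$ with sharpness. For $1<p\le 2$ the $H^p$ and $F^{p,2}_0$ norms are equivalent with constants \emph{uniform in $p$}, so $\|\Be:L^{p,2}\to F^{p,2}_0\|\approx\|\Be:L^{p,2}\to H^p\|\lesssim\sqrt{p'}$ on that range, and the sharpness transfers for free. For $p>2$ one applies the self-duality identity \eqref{eqn:dualQ}, $\|\Be:L^{p,2}\to F^{p,2}_0\|=\|\Be:L^{p',2}\to F^{p',2}_0\|$, reducing to $1<p'<2$. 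Both the upper bound and the sharpness come out of results already in hand, with no new computation.

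For sharpness, your suggestion to test against normalized reproducing kernels $\varphi_a$ and hunt for a $\sqrt p$ from Beta-function asymptotics is a genuinely different idea, but it is not carried out and there are real obstacles: the Forelli--Rudin integrals produce Gamma-function ratios whose $p$-growth is not transparently of order $\sqrt p$, and there is no obvious mechanism to manufacture the $\sqrt{p}$ discrepancy between the $H^p$ and $F^{p,2}_0$ norms which is what actually drives the sharpness here. The paper's test functions $f_{\delta,p}(z)=((1+z)/(1-z))^{\delta/p}$ (Lemma \ref{lem:f1}) are tuned precisely so that $\|f\|_{H^p}\approx\sqrt p\,\|f\|_{F^{p,2}_0}$, which is the key quantitative input. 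Without an analogue of that estimate for your kernel family, the sharpness part of your argument is a gap rather than an alternative proof.
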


The paper is organized as follows. In Section \ref{sec:preliminaries}, 
we recall the results on Muckenhoupt weights, duality and extrapolation theorems that will be needed in the proof of our main results. 
In Section \ref{sec:necessary}, it is proved the necessary condition $\omega\in A_p$ 
 in Theorems \ref{thm:Bergman} and \ref{thm:Bergman2}. In Section \ref{sec:estimate}, 
 we prove the estimate \eqref{eqn:Bergman} in Theorem \ref{thm:Bergman}. 
Its sharpness will be proved  in Section \ref{sec:sharpness}. In this section we also prove Theorem \ref{cor:Bergman}.
 Finally, Sections \ref{sec:thmbergman2} and \ref{sec:thmbergman3} are devoted to the proof of Theorems 
\ref{thm:Bergman2}, \ref{thm:Bergman3} and \ref{cor:Bergman3}.

A final remark on notations. If $X$ and $Y$ are a couple of normed spaces and $T$ is a bounded linear operator from  $X$ to $Y$, 
we denote its norm by $\|T:X\to Y\|$. If $X=Y$ we also denote this norm by $\|T\|_X$.

Throughout the paper 
$C(p_1,\cdots,p_k)$ will denote a positive constant depending only of the parameters $p_1,\cdots,p_k$, 
which may vary from place to place.
If we do not need to track  the dependence of the constant, we write  $f\lesssim g$  to denote 
the existence of a constant $C$ such that $f\le C g$ and $f\approx g$ to denote $f\lesssim g\lesssim f$.

\section{Preliminaries}\label{sec:preliminaries}
\subsection{The Muckenhoupt class $A_p$}\quad\par
In this section we recall the definition and some properties of the weights in $A_p$.
\begin{defn} 
We say that a nonnegative function $\omega\in L^1$ is in the Muckenhoupt class $A_p$, $1<p<\infty$, if 
$$
[\omega]_{A_p}=\sup_{B}\frac{\omega(B)\left(\omega'(B)\right)^{p/p'}}{|B|^p}<\infty,
$$
where the supremum is taken over all nonisotropic balls $B$ 
$$
B=B(\zeta,r)=\{\eta\in\bB:\,|1-\z\overline \eta|<r\},
$$
$\omega'=\omega^{-(p'-1)}=\omega^{-p'/p}$
and
$$
\omega(B)=\int_{B}\omega d\sigma.
$$
Here, if $E\subset \bB$ is measurable, we write $|E|=\sigma(E)$.
\end{defn}

\begin{defn}
 A nonnegative function $\omega\in L^1$ is in $A_1$ if there exists $C>0$ such that for a.e. $\zeta\in\bB$,
 $M[\omega](\zeta)\leq C \omega(\zeta)$, where
 if $\psi\in L^1$, we denote by $M(\psi)$ the nonisotropic Hardy-Littlewood maximal function defined by
$$M(\psi)(\zeta)=\sup_{\zeta\in B}\frac1{|B|}\int_B |\psi| d\sigma.$$

Here,
$[\omega]_{A_1}=\operatorname{ess}\sup_{\zeta\in \bB}\frac{M(\omega)(\zeta)}{\omega(\zeta)}.$
\end{defn}

The following property of the weights is well known (see \cite{Du2}).
\begin{lem}
\begin{enumerate}
	\item If $1\leq p\leq q<+\infty$, then:
 $A_p\subset A_q$  and  $[\omega]_{A_q}\le [\omega]_{A_p}$.
\item If $\omega\in A_p$, then $\omega'\in A_{p'}$ and $[\omega']_{A_{p'}}=[\omega]_{A_p}^{p'-1}=[\omega]_{A_p}^{1/(p-1)}$.
\end{enumerate}
\end{lem}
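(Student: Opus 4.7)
The plan is to reduce both statements to a direct manipulation of the defining expression for $[\omega]_{A_p}$, rewritten in averaged form. Dividing numerator and denominator by $|B|^p$ one sees that
$$
[\omega]_{A_p}=\sup_{B}\left(\frac{1}{|B|}\int_B\omega\,d\sigma\right)\left(\frac{1}{|B|}\int_B\omega^{-1/(p-1)}\,d\sigma\right)^{p-1},
$$
using $p/p'=p-1$ and $\omega'=\omega^{-1/(p-1)}$. This symmetric form is what makes both assertions essentially algebraic.

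For part (i), fix a ball $B$ and a weight $\omega$ and compare the $A_p$ and $A_q$ integrands with $1\le p\le q$. The first factor $|B|^{-1}\int_B\omega$ is identical. For the second, write $\omega^{-1/(q-1)}=(\omega^{-1/(p-1)})^{(p-1)/(q-1)}$ and apply Jensen's inequality with the concave power $t\mapsto t^{(p-1)/(q-1)}$ (the exponent lies in $(0,1]$) to the probability measure $|B|^{-1}d\sigma$ on $B$. Raising the resulting inequality to the power $q-1$ gives
$$
\left(\frac{1}{|B|}\int_B\omega^{-1/(q-1)}\right)^{q-1}\le\left(\frac{1}{|B|}\int_B\omega^{-1/(p-1)}\right)^{p-1},
$$
and multiplying by the common first factor and taking the supremum over $B$ yields $[\omega]_{A_q}\le[\omega]_{A_p}$. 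The case $p=1$ is handled separately by observing that $[\omega]_{A_1}$ is recovered from the $A_q$ expression by letting the exponent $1/(q-1)$ grow and using $M(\omega)\le[\omega]_{A_1}\omega$ pointwise.

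For part (ii), the key algebraic identity is $(1-p')(1-p)=1$, obtained from $p+p'=pp'$; this gives $(\omega')^{1-p}=\omega$. Consequently the role of weight and dual weight is interchanged when passing from $A_p$ to $A_{p'}$. I would then raise the $A_p$ supremand to the power $p'/p=p'-1$, that is, take the $(p'-1)$-th power of
$\frac{\omega(B)(\omega'(B))^{p/p'}}{|B|^p}$; using $(p/p')(p'/p)=1$ and $p\cdot(p'/p)=p'$, this becomes exactly
$$
\frac{\omega(B)^{p'-1}\omega'(B)}{|B|^{p'}},
$$
which is the supremand defining $[\omega']_{A_{p'}}$ (after substituting $(\omega')^{1-p}=\omega$). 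Taking the supremum over $B$ delivers $[\omega']_{A_{p'}}=[\omega]_{A_p}^{p'-1}=[\omega]_{A_p}^{1/(p-1)}$.

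No serious obstacle is expected: both parts are formal once the $A_p$ characteristic is written in the averaged symmetric form, and the only analytic ingredient is Jensen's inequality in part (i). Nothing in the argument is specific to the sphere; only that $|B|$, $\omega(B)$, and $\omega'(B)$ are integrals against a common positive reference measure enters, so the proof is identical to the Euclidean one in \cite{Du2}.
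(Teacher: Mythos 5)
Your proof is correct, and the standard one. The paper itself does not prove this lemma; it states it as well known and cites Duoandikoetxea's \emph{Fourier Analysis} \cite{Du2}, so there is no in-paper argument to compare against. Your rewriting of the characteristic in averaged form
\[
[\omega]_{A_p}=\sup_{B}\Bigl(\frac{1}{|B|}\int_B\omega\,d\sigma\Bigr)\Bigl(\frac{1}{|B|}\int_B\omega^{-1/(p-1)}\,d\sigma\Bigr)^{p-1}
\]
is exactly the right normalization; Jensen's inequality with the concave power $(p-1)/(q-1)$ handles $1<p\le q$, and the algebraic identity $(p-1)(p'-1)=1$ together with $p(p'-1)=p'$ makes part~(ii) a pure computation (with $\sup f^{\alpha}=(\sup f)^{\alpha}$ for $\alpha>0$).

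One small place to tighten: for $p=1$ in part~(i) you gesture at a limiting argument ``$1/(q-1)$ grows'' alongside the pointwise bound $M(\omega)\le[\omega]_{A_1}\omega$. The clean route is the latter alone: from $\frac{1}{|B|}\int_B\omega\le M(\omega)(\zeta)\le[\omega]_{A_1}\omega(\zeta)$ a.e.\ on $B$ you get $\omega(\zeta)^{-1/(q-1)}\le[\omega]_{A_1}^{1/(q-1)}\bigl(\frac{1}{|B|}\int_B\omega\bigr)^{-1/(q-1)}$, and averaging over $B$ and raising to the $(q-1)$st power gives $\bigl(\frac{1}{|B|}\int_B\omega^{-1/(q-1)}\bigr)^{q-1}\le[\omega]_{A_1}\bigl(\frac{1}{|B|}\int_B\omega\bigr)^{-1}$, hence $[\omega]_{A_q}\le[\omega]_{A_1}$. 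This is slightly cleaner than invoking a limit. Everything else is sound and transfers to the nonisotropic balls on $\bB$ exactly as you observe.
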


\subsection{The spaces $L^{p,2}(\omega)$}\quad\par

It is well known that if $\mu$ is a positive measure on a set $X\subset \C^n$, then for $1<p<\infty$, the dual of $L^p(\mu)$ can be identified with 
$L^{p'}(\mu)$, in the sense that, for each  $\Gamma\in (L^{p}(\mu))'$ there exists a unique $\psi\in L^{p'}(\mu)$ such that
$\Gamma(\varphi)=\int_X \varphi\psi d\mu$ and $\|\Gamma\|= \|\psi\|_{L^{p'}(\mu)}$.

Using this fact, we have that if $1<p<\infty$ and $\omega\in A_p$, then  for any  linear form $\Gamma\in (L^{p}(\omega))'$ 
there exists a unique $\psi\in L^{p'}(\omega')$ such that 
$$
\Gamma(\psi)=\langle \varphi,\psi\rangle_\bB=\int_{\bB}\varphi \overline \psi d\sigma,
$$
and moreover,  $\|\Gamma\|= \|\psi\|_{L^{p'}(\omega')}$.
That is, the dual of $L^p(\omega)$ with respect to the pairing $\langle \cdot,\cdot\rangle_\bB$ is $L^{p'}(\omega')$.

Since the Cauchy projection maps $L^p(\omega)$ onto $H^p(\omega)$,  with the same pairing we can identify 
the dual of $H^p(\omega)$ with $H^{p'}(\omega')$ for $p>1$ (see \cite{Lu}).

An analogous duality result for mixed-norm spaces is proved in \cite{BenedekPanzone}, which restricted to our case is:

\begin{prop}\label{prop:dualLpq}
 Let $1<p<\infty$ and $\omega\in A_p$. The dual of the mixed-norm space $L^{p,2}(\omega)$ 
with respect to the pairing 
$$
\langle \varphi,\psi\rangle_\B=\int_{\B}\varphi(z)\overline{\psi(z)}\frac{d\nu(z)}{1-|z|^2}
=\int_{\bB}\int_0^1\varphi(r\z)\overline{\psi(r\z)}\frac{2n r^{2n-1}\,d r}{1-r^2} d\sigma(\z)
$$
is $L^{p',2}(\omega')$. 
That is, for any $\Gamma \in \left(L^{p,2}(\omega) \right)'$ 
there exists $\psi\in L^{p',2}(\omega')$ such that 
$\Gamma(\varphi)=\langle \varphi,\psi\rangle_\B$ and $\|\Gamma\|= \|\psi\|_{L^{p',2}(\omega')}$.
\end{prop}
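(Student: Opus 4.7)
The plan is to identify $L^{p,2}(\omega)$ with a Bochner space of Hilbert-valued functions on $\bB$ and then apply the standard duality of $L^p$-spaces with reflexive target. Let $d\mu(r)=\frac{2nr^{2n-1}}{1-r^2}dr$ on $(0,1)$ and set $H=L^2((0,1),d\mu)$; this is a separable Hilbert space. For $\varphi$ measurable on $\B$, define $\tilde\varphi\colon\bB\to H$ by $\tilde\varphi(\zeta)(r)=\varphi(r\zeta)$. Fubini and the separability of $H$ imply that $\tilde\varphi$ is strongly measurable, and
$$\|\varphi\|_{L^{p,2}(\omega)}^p=\int_{\bB}\|\tilde\varphi(\zeta)\|_H^p\,\omega(\zeta)\,d\sigma(\zeta),$$
so $\varphi\mapsto\tilde\varphi$ is an isometric isomorphism of $L^{p,2}(\omega)$ onto the Bochner space $L^p(\bB,\omega d\sigma;H)$. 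Moreover, under this identification the pairing of the statement becomes
$$\langle\varphi,\psi\rangle_{\B}=\int_{\bB}\langle\tilde\varphi(\zeta),\tilde\psi(\zeta)\rangle_H\,d\sigma(\zeta).$$

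The easy direction is to control the functional induced by a given $\psi\in L^{p',2}(\omega')$. I would apply Cauchy--Schwarz in $H$ followed by Hölder's inequality on $\bB$ with the splitting $1=\omega^{1/p}\omega^{-1/p}$; using the identity $\omega^{-p'/p}=\omega^{1-p'}=\omega'$ this yields
$$|\langle\varphi,\psi\rangle_{\B}|\le\|\varphi\|_{L^{p,2}(\omega)}\,\|\psi\|_{L^{p',2}(\omega')},$$
so $\psi$ induces a continuous linear functional of norm at most $\|\psi\|_{L^{p',2}(\omega')}$.

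For the converse, given $\Gamma\in(L^{p,2}(\omega))'$, I would invoke the classical duality $(L^p(\bB,\omega d\sigma;H))'=L^{p'}(\bB,\omega d\sigma;H)$, which is valid because $H$ is reflexive (every Hilbert space has the Radon--Nikodym property). This produces $g\in L^{p'}(\bB,\omega d\sigma;H)$ with $\Gamma(\varphi)=\int_{\bB}\langle\tilde\varphi,g\rangle_H\,\omega\,d\sigma$ and $\|\Gamma\|=\|g\|_{L^{p'}(\omega d\sigma;H)}$. I then set $\tilde\psi(\zeta)=\omega(\zeta)g(\zeta)$, which corresponds to a function $\psi$ on $\B$. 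A direct computation gives
$$\|\psi\|_{L^{p',2}(\omega')}^{p'}=\int_{\bB}\omega^{p'}\,\|g\|_H^{p'}\,\omega^{1-p'}\,d\sigma=\|g\|_{L^{p'}(\omega d\sigma;H)}^{p'},$$
so $\psi\in L^{p',2}(\omega')$ with matching norm, and a substitution $\langle\tilde\varphi,g\rangle_H\,\omega=\langle\tilde\varphi,\omega g\rangle_H=\langle\tilde\varphi,\tilde\psi\rangle_H$ yields $\Gamma(\varphi)=\langle\varphi,\psi\rangle_{\B}$.

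The only genuine obstacle is formal: verifying strong measurability of $\tilde\varphi$ and justifying the application of Bochner-space duality. Both are routine because $H$ is a separable Hilbert space and the underlying measures are $\sigma$-finite; alternatively, the entire conclusion is subsumed by the Benedek--Panzone theorem cited above. Note that the hypothesis $\omega\in A_p$ is used only peripherally, to ensure local integrability of $\omega$ and $\omega'$ so that both pairings are meaningful; the duality itself holds for any weight.
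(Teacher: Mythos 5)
Your proof is correct, and it fills in an argument that the paper itself omits: the paper does not prove this proposition but simply states that it is a special case of the mixed-norm duality theorem of Benedek and Panzone. Your route is a clean modern repackaging of that result. You isometrically identify $L^{p,2}(\omega)$ with the Bochner space $L^p(\bB,\omega\,d\sigma;H)$, $H=L^2((0,1),d\mu)$, observe that the proposition's pairing becomes $\int_{\bB}\langle\tilde\varphi,\tilde\psi\rangle_H\,d\sigma$, and then invoke the vector-valued duality $(L^p(\mu;H))'=L^{p'}(\mu;H)$ (valid since $H$ is a separable Hilbert space, hence reflexive with the Radon--Nikodym property, and $\omega\,d\sigma$ is finite). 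The change of weight is handled correctly by the substitution $\tilde\psi=\omega g$, which converts $\|g\|_{L^{p'}(\omega\,d\sigma;H)}$ into $\|\psi\|_{L^{p',2}(\omega')}$ using $\omega^{p'}\omega=\omega\cdot\omega^{p'}$ against $\omega'=\omega^{1-p'}$, and the easy direction is correct by Cauchy--Schwarz in $H$ followed by H\"older on $\bB$ with the split $1=\omega^{1/p}\omega^{-1/p}$. Benedek and Panzone argue directly with iterated integrals instead of through Bochner integration, so your presentation is genuinely different in form, though the underlying mathematics is the same. Your closing remark is also correct: the $A_p$ hypothesis is not used in the duality argument beyond guaranteeing $0<\omega<\infty$ a.e.\ and $\omega'\in L^1_{\mathrm{loc}}$, so the spaces and the pairing are well defined; the paper imposes $A_p$ simply because that is the standing assumption throughout.
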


\subsection{The estimate of the Hardy-Littlewood maximal operator}
We recall that in \cite{Bu} it was obtained a norm-estimate for the  Hardy-Littlewood maximal operator $M$ 
on weighted Lebesgue spaces on $\R^n$. This result was extended to  metric spaces with a doubling measure.

\begin{thm}{\cite[Proposition 7.13]{HyKa}} \label{thm:bucley} 
If  $1<p<\infty$ and $\omega\in A_p$, then 
$$
\|M\|_{L^p(\omega)} \lesssim [w]^{1/(p-1)}_{A_p}.
$$
\end{thm}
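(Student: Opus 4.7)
The plan is to adapt Buckley's original argument to the space-of-homogeneous-type setting on $\bB$. The qualitative assertion that $M$ is bounded on $L^p(\omega)$ precisely when $\omega\in A_p$ is classical; the new content is the sharp dependence on $[\omega]_{A_p}$.

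The first step I would take is a dyadic reduction. Using the adjacent dyadic systems $\{\mathcal D^\alpha\}_{\alpha=1}^N$ on $\bB$ from \cite{Ka}, every nonisotropic ball sits inside a dyadic cube of comparable measure, so $Mf\lesssim\sum_\alpha M^{\mathcal D^\alpha}f$ and it is enough to bound a single dyadic maximal operator $M^{\mathcal D}$. For $f\ge 0$, I would then set up the standard Calderón--Zygmund stopping-time construction: at each height $2^k$, let $\{Q^k_j\}_j$ be the maximal cubes in $\mathcal D$ with $\langle f\rangle_{Q^k_j}>2^k$. The principal sets $E^k_j=Q^k_j\setminus\bigcup_i Q^{k+1}_i$ are disjoint across $(k,j)$ and satisfy $|E^k_j|\gtrsim|Q^k_j|$, yielding the sparse pointwise domination $M^{\mathcal D}f\lesssim\sum_{k,j}\langle f\rangle_{Q^k_j}\mathbf 1_{Q^k_j}$.

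The second step is to extract the sharp exponent. A direct estimate via Hölder against the dual weight $\omega'=\omega^{1-p'}$ together with the $A_p$ identity $\omega(Q)\omega'(Q)^{p-1}\le[\omega]_{A_p}|Q|^p$ and the sparseness of $\{Q^k_j\}$ yields only a bound of order $[\omega]_{A_p}$, which is not sharp for $p>2$. To reach the Buckley exponent $1/(p-1)$, I would pivot to the $\omega'$-weighted dyadic maximal operator $M^{\mathcal D}_{\omega'}g(\z)=\sup_{Q\ni\z}\omega'(Q)^{-1}\int_Q g\,\omega'\,d\sigma$ by writing $\langle f\rangle_Q=\langle f/\omega'\rangle^{\omega'}_Q\cdot\omega'(Q)/|Q|$. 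Because $M^{\mathcal D}_{\omega'}$ is bounded on $L^{p'}(\omega')$ with a constant depending only on $p'$ (the weight-independent Doob-type boundedness on a measure space), combining this with the duality $[\omega']_{A_{p'}}=[\omega]_{A_p}^{1/(p-1)}$ and a final Hölder application on the disjoint $E^k_j$ should deliver the sharp estimate.

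The main obstacle is precisely the extraction of the sharp exponent $1/(p-1)$: the naive execution of the second step gives only $[\omega]_{A_p}$, whereas the sharp target is $[\omega]_{A_p}^{1/(p-1)}$. The key insight is to realise that the $A_p$ constant must enter through its dual $A_{p'}$ form $[\omega']_{A_{p'}}=[\omega]_{A_p}^{1/(p-1)}$, which forces all averages to be taken against the measure $\omega'\,d\sigma$ rather than against $d\sigma$; once the problem is set up in this pivoted form, the sharp dependence becomes a simple consequence of the weight-free boundedness of the $\omega'$-weighted maximal operator.
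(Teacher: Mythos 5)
The paper does not actually prove this statement: it is quoted verbatim from \cite[Proposition 7.13]{HyKa} and used as a black box, so there is no ``paper's own proof'' against which to compare step by step. Taking your proposal on its own merits, the first two steps are sound and standard, but the third step, which is where the sharp exponent must appear, has a genuine gap that you have papered over with the phrase ``a final H\"older application on the disjoint $E^k_j$ should deliver the sharp estimate.''

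Here is the problem. Carrying out the pivot you describe, one gets, with $\nu=\omega'$ and $E_Q$ the principal sets,
$$
\int_{\bB}(M^{\mathcal D}f)^p\,\omega\,d\sigma
\lesssim\sum_{Q\in\mathcal S}\langle f\rangle_Q^p\,\omega(E_Q)
\le[\omega]_{A_p}\sum_{Q\in\mathcal S}\nu(Q)\Bigl(\tfrac{1}{\nu(Q)}\int_Q f\,d\sigma\Bigr)^p ,
$$
and to conclude one needs the last sum to be $\lesssim\|f/\nu\|_{L^p(\nu)}^p=\|f\|_{L^p(\omega)}^p$ via the boundedness of $M_\nu^{\mathcal D}$ on $L^p(\nu)$. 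That step requires the family $\mathcal S$ to be sparse (equivalently, Carleson) \emph{with respect to the measure} $\nu\,d\sigma$; but the Calder\'on--Zygmund stopping time only gives $|E_Q|\gtrsim|Q|$, i.e.\ sparseness with respect to Lebesgue measure on $\bB$. In general $\nu(E_Q)\gtrsim\nu(Q)$ fails without invoking an $A_\infty$-type property of $\nu$, and invoking that reintroduces a weight-dependent constant precisely of the kind one is trying to control. Moreover, even if you grant the Carleson step, this chain of estimates produces $[\omega]_{A_p}^{1}$, not $[\omega]_{A_p}^{1/(p-1)}$: the $A_p$ constant has entered once through the inequality $\omega(Q)\nu(Q)^{p-1}\le[\omega]_{A_p}|Q|^p$, and at no point does $[\omega']_{A_{p'}}$ replace it. Simply observing the identity $[\omega']_{A_{p'}}=[\omega]_{A_p}^{1/(p-1)}$ does not lower the exponent; some additional mechanism is needed. (Note also that for the exponents to be consistent you would want $M_{\omega'}^{\mathcal D}$ bounded on $L^p(\omega')$, not on $L^{p'}(\omega')$ as you wrote, though both are true with weight-free constants.)

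What actually produces the Buckley exponent in the space-of-homogeneous-type setting is an additional self-improvement input. The standard routes are: (i) establish the sharp weak-type bound $\|M\|_{L^q(\omega)\to L^{q,\infty}(\omega)}\lesssim[\omega]_{A_q}^{1/q}$ (this \emph{is} obtained by the stopping-time plus H\"older argument you sketch), then combine it with the sharp reverse H\"older inequality, which shows $\omega\in A_{p-\epsilon}$ with $\epsilon\approx(p-1)[\omega]_{A_p}^{-1/(p-1)}$ and $[\omega]_{A_{p-\epsilon}}\lesssim[\omega]_{A_p}$, and interpolate; or (ii) prove the pointwise estimate $M^{\mathcal D}f(\zeta)^q\le[\omega]_{A_q}\,M_\omega^{\mathcal D}(f^q)(\zeta)$ valid for $\omega\in A_q$, and again pass from $q=p-\epsilon$ to $q=p$ using reverse H\"older. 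Either way, the sharp reverse H\"older inequality (or an equivalent self-improvement of $A_p$) is the missing ingredient in your plan; without it, the pivot and the weight-free Doob bound only give the non-sharp exponent $1$.
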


\subsection{An extrapolation theorem}
It is shown in \cite{Du} a  version of the extrapolation theorem of Rubio de Francia, which will be used in the proof of our results. Namely, we have:
\begin{thm}[\cite{Du}]\label{thm:duoandikoetxea}
Assume that for some family of pairs of nonnegative functions, $(\varphi,\psi)$, for some $p_0\in [1,\infty)$, 
and for all $\omega\in A_{p_0}$, we have
$$
\left( \int_{\bB} \psi^{p_0}\omega d\sigma\right)^{1/p_0}
\leq C N([w]_{A_{p_0}}) \left( \int_{\bB} \varphi^{p_0}\omega d\sigma\right)^{1/p_0},
$$
where $N$ is an increasing function and the constant $C$ does not depend on $\omega$. 
Then for all $1<p<\infty$ and all $\omega\in A_p$ we have
$$
\left( \int_{\bB} \psi^{p}\omega d\sigma \right)^{1/p}\leq C K(w) \left(  \int_{\bB} \varphi^p\omega  d\sigma \right)^{1/p},
$$
where
$$
K(\omega)=\begin{cases}
N\left([\omega]_{A_p} (2\|M\|_{L^p(\omega)})^{p_0-p}\right),&\text{ if $p<p_0$}\\
N\left([\omega]_{A_p}^{(p_0-1)/(p-1)}  (2\|M\|_{L^{p'}(\omega')})^{(p-p_0)/(p-1)}\right),&\text{ if $p>p_0$}.
\end{cases}
$$

In particular, $K(w) \leq C_1 N((C_2[\omega]_{A_p}^{\max\{1,(p_0-1)/(p-1)\}})$.
\end{thm}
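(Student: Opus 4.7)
The plan is to apply Rubio de Francia's extrapolation method, adapted to the homogeneous space structure of $\bB$. Given $\omega\in A_p$, one constructs a weight $W\in A_{p_0}$ by combining the Rubio de Francia iteration algorithm with a suitable factorization involving $\omega$, applies the hypothesis at the exponent $p_0$ to $W$, and then converts the resulting inequality back to $L^p(\omega)$ using H\"older's inequality.

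\emph{Case $p<p_0$.} Assume $\|\varphi\|_{L^p(\omega)}<\infty$. Set $S:=2\|M\|_{L^p(\omega)}$, finite by Theorem \ref{thm:bucley}. Define
\begin{equation*}
u := \sum_{k\ge 0}\frac{M^k\varphi}{S^k},
\end{equation*}
and observe $u\ge\varphi$, $\|u\|_{L^p(\omega)}\le 2\|\varphi\|_{L^p(\omega)}$, and $u\in A_1$ with $[u]_{A_1}\le S$. Form the weight $W:=\omega\, u^{p-p_0}$. A direct computation, using the pointwise $A_1$-bound $|B|^{-1}\int_B u\le [u]_{A_1}u$ a.e.\ on $B$ to control $\int_B W$, and H\"older with exponents $\theta=(p-1)/(p_0-1)$ and $1-\theta=(p_0-p)/(p_0-1)$ to control $\int_B W^{1-p_0'}$ through the factorization $W^{-1/(p_0-1)}=(\omega')^\theta u^{1-\theta}$, produces
\begin{equation*}
[W]_{A_{p_0}}\le [\omega]_{A_p}[u]_{A_1}^{p_0-p}\le [\omega]_{A_p}(2\|M\|_{L^p(\omega)})^{p_0-p}.
\end{equation*}
Applying the hypothesis to $W$ yields $\|\psi\|_{L^{p_0}(W)}\le CN([W]_{A_{p_0}})\|\varphi\|_{L^{p_0}(W)}$. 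Since $\varphi\le u$ and $p<p_0$, one has $\varphi^{p_0}W\le \varphi^p\omega$ pointwise, hence $\|\varphi\|_{L^{p_0}(W)}\le\|\varphi\|_{L^p(\omega)}^{p/p_0}$. The identity $\omega=W^{p/p_0}(\omega u^p)^{(p_0-p)/p_0}$ and H\"older with exponents $p_0/p$ and $p_0/(p_0-p)$ yield $\|\psi\|_{L^p(\omega)}\le\|\psi\|_{L^{p_0}(W)}\,\|u\|_{L^p(\omega)}^{(p_0-p)/p_0}$. Combining and using $\|u\|_{L^p(\omega)}\le 2\|\varphi\|_{L^p(\omega)}$ produces the first branch of $K(\omega)$.

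\emph{Case $p>p_0$.} The strategy is analogous but one iterates in $L^{p'}(\omega')$ rather than $L^p(\omega)$, exploiting $\omega'\in A_{p'}$ and the sharp bound on $\|M\|_{L^{p'}(\omega')}$ from Theorem \ref{thm:bucley}. By $L^{p/p_0}(\omega)$-duality one reduces to estimating $\int\psi^{p_0}g\omega\,d\sigma$ over $g\ge 0$ with $\|g\|_{L^{(p/p_0)'}(\omega)}=1$; a Rubio de Francia iterate $u$ in $L^{p'}(\omega')$ is then combined with $\omega$ via a dual factorization (the analogue of $W=\omega u^{p-p_0}$ with $p-p_0>0$) to produce an $A_{p_0}$-weight. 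Tracking constants and invoking the $A_p$--$A_{p'}$ symmetry $[\omega']_{A_{p'}}=[\omega]_{A_p}^{1/(p-1)}$ yields $[W]_{A_{p_0}}\lesssim [\omega]_{A_p}^{(p_0-1)/(p-1)}(2\|M\|_{L^{p'}(\omega')})^{(p-p_0)/(p-1)}$, whence the second branch of $K(\omega)$.

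The main obstacle is the quantitative control of $[W]_{A_{p_0}}$, which in each case must be kept tight so that the final exponent of $[\omega]_{A_p}$ is exactly $\max\{1,(p_0-1)/(p-1)\}$ and not larger. The final estimate $K(\omega)\le C_1 N(C_2[\omega]_{A_p}^{\max\{1,(p_0-1)/(p-1)\}})$ then follows by substituting Buckley's bound $\|M\|_{L^q(\omega)}\lesssim [\omega]_{A_q}^{1/(q-1)}$ from Theorem \ref{thm:bucley}, together with the monotonicity $A_p\subset A_q$ for $p\le q$.
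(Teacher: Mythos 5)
The paper does not prove this theorem; it quotes it from Duoandikoetxea's \emph{Extrapolation of weights revisited} and simply remarks (after the statement) that the passage from $\R^n$ to the homogeneous setting of $\bB$ only requires the homogeneous-space version of Buckley's maximal-function bound, i.e.\ Theorem \ref{thm:bucley}. Your proposal reproduces Duoandikoetxea's Rubio de Francia argument, which is exactly the right route, and the case $p<p_0$ is worked out completely and correctly: the factorization $W^{-1/(p_0-1)}=(\omega')^{\theta}u^{1-\theta}$ with $\theta=(p-1)/(p_0-1)\in(0,1)$, the pointwise $A_1$ lower bound for $u$, and the two applications of H\"older give precisely $[W]_{A_{p_0}}\le[\omega]_{A_p}[u]_{A_1}^{p_0-p}$ and the stated $K(\omega)$.

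The case $p>p_0$, however, is only gestured at, and the weight you describe as ``the analogue of $W=\omega u^{p-p_0}$ with $p-p_0>0$'' is not the correct construction. For $p>p_0$ the exponent $\theta=(p-1)/(p_0-1)$ exceeds $1$, so the H\"older step used above for $W^{-1/(p_0-1)}$ no longer works, and $\omega u^{p-p_0}$ with $u$ an $A_1$ iterate of $\varphi$ does not produce a weight in $A_{p_0}$ with the desired constant. What is actually needed is: write $\|\psi\|_{L^p(\omega)}^{p_0}=\sup_g\int\psi^{p_0}g\,\omega\,d\sigma$ over $g\ge0$ with $\|g\|_{L^{(p/p_0)'}(\omega)}=1$, set $G:=g^{(p/p_0)'/p'}\omega$ so that $\|G\|_{L^{p'}(\omega')}=1$, and run the Rubio de Francia iteration on $G$ in $L^{p'}(\omega')$ to obtain $U\ge G$ with $\|U\|_{L^{p'}(\omega')}\le 2$ and $[U]_{A_1}\le 2\|M\|_{L^{p'}(\omega')}$. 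The $A_{p_0}$ weight is then the \emph{geometric mean}
$$
W:=U^{(p-p_0)/(p-1)}\,\omega^{(p_0-1)/(p-1)},
$$
which dominates $g\omega$ pointwise; the same two-step H\"older-plus-$A_1$ argument (now applied to the split $W=U^{\gamma}\omega^{\delta}$ with $\gamma+\delta=1$) gives $[W]_{A_{p_0}}\le[U]_{A_1}^{(p-p_0)/(p-1)}[\omega]_{A_p}^{(p_0-1)/(p-1)}$, and one checks $\int\varphi^{p_0}W\le\bigl(\int\varphi^p\omega\bigr)^{p_0/p}\|U\|_{L^{p'}(\omega')}^{p'(p-p_0)/p}\lesssim\|\varphi\|_{L^p(\omega)}^{p_0}$ by H\"older with exponents $p/p_0$ and $p/(p-p_0)$. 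With this in place the second branch of $K(\omega)$ follows exactly as you claim, and the ``in particular'' bound follows from Theorem \ref{thm:bucley} and $[\omega']_{A_{p'}}=[\omega]_{A_p}^{1/(p-1)}$, as you note. In short: correct approach, complete for $p<p_0$, but the $p>p_0$ case needs the weight replaced by the geometric mean above and the details filled in.
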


 \begin{rem}
 This theorem is proved in \cite{Du} in $\R^n$, but it can be easily extended to the setting 
of homogeneous spaces using Theorem \ref{thm:bucley}.
 \end{rem}

\section{Proof of the necessary condition in Theorems \ref{thm:Bergman} and  \ref{thm:Bergman2}}\label{sec:necessary}

Since 
\begin{equation}
\|\Be:L^{p,2}(\omega)\to F^{p,2}_0(\omega)\|= \|\Qo:L^{p,2}(\omega)\to L^{p,2}(\omega)\|
\end{equation}
where   $\Qo(\varphi)(z)=(1-|z|^2)\left(I+\frac{R}{n}\right)\Be(\varphi)(z)$,
the necessary condition $\omega\in A_p$ in Theorems   \ref{thm:Bergman} and  \ref{thm:Bergman2}, follows from the following proposition.

\begin{prop} \label{prop:neccond}
Let $1<p<\infty$ and let $0\le \omega\in L^1$. 
 If either $\Be$ is bounded from $L^{p,2}(\omega)$ to $L^p(\omega)$, or  $\Qo$ 
is bounded from $L^{p,2}(\omega)$ to itself, then $\omega\in A_p$.
\end{prop}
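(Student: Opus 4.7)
The strategy is to test each bounded operator against functions adapted to an arbitrary nonisotropic boundary ball $B=B(\zeta_0,r)\subset\bB$, using a phase correction to linearize the complex Bergman or Cauchy kernel at the hyperbolic centre $z_0=(1-r/2)\zeta_0$ of the Carleson region over $B$. In either case the test produces a lower bound matching $[\omega]_{A_p}$ (up to constants), so that the boundedness hypothesis forces $\omega\in A_p$.

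For the first hypothesis (that $\Be:L^{p,2}(\omega)\to L^p(\omega)$ is bounded with norm $C$), I would route the argument through the duality recorded in the introduction: with respect to the unweighted pairings, the adjoint of $\Be$ is $(1-|w|^2)(I+R/n)\Ca:L^{p'}(\omega')\to L^{p',2}(\omega')$, and both operators have the same norm. I then test the dual operator on
\[
\psi(\z)=\omega(\z)\,\chi_B(\z)\,e^{i(n+1)\arg(1-z_0\bar\z)}.
\]
The phase is arranged so that $\psi(\z)(1-z_0\bar\z)^{-(n+1)}=\omega(\z)\chi_B(\z)/|1-z_0\bar\z|^{n+1}$ is real and nonnegative; since $|1-z_0\bar\z|\approx r$ for $\z\in B$, this yields $|(I+R/n)\Ca\psi(z_0)|\gtrsim\omega(B)/r^{n+1}$, and multiplying by $(1-|z_0|^2)\approx r$ gives $|(1-|z_0|^2)(I+R/n)\Ca\psi(z_0)|\gtrsim\omega(B)/|B|$. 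On the other side, $|\psi|^{p'}=\omega^{p'}\chi_B$ together with $\omega^{p'}\omega'=\omega$ gives $\|\psi\|_{L^{p'}(\omega')}^{p'}=\omega(B)$. The crucial step is to propagate this pointwise estimate at $z_0$ to a Kobayashi ball of some universal hyperbolic radius: because the holomorphic function $(I+R/n)\Ca\psi$ admits the matching upper bound $\lesssim\omega(B)/r^{n+1}$ on such a ball (since $|1-z\bar\z|\approx r$ uniformly for $\z\in B$ and $z$ in the ball), an invariant Cauchy/Schwarz--Pick estimate forces the lower bound to persist on a slightly smaller Kobayashi ball. In polar coordinates this ball contains every $z=s\eta$ with $s\in[1-cr,1-r/c]$ and $\eta\in B(\zeta_0,cr)$ for some universal $c>1$, on which $(1-|z|^2)\approx r$. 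The radial $L^2$-integration contributes the constant $\int_{1-cr}^{1-r/c}\frac{2ns^{2n-1}\,ds}{1-s^2}\approx\log c$, and the outer integration over $\eta\in B(\zeta_0,cr)\supset B$ produces $\omega'(B)$. Combining with the boundedness hypothesis yields $(\omega(B)/|B|)^{p'}\omega'(B)\lesssim C^{p'}\omega(B)$, i.e.\ $\omega(B)^{p'-1}\omega'(B)\lesssim C^{p'}|B|^{p'}$, which is precisely $[\omega']_{A_{p'}}<\infty$. The duality formula $[\omega]_{A_p}=[\omega']_{A_{p'}}^{p-1}$ from the $A_p$ lemma then gives $\omega\in A_p$ with $[\omega]_{A_p}\lesssim C^p$.

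For the second hypothesis (that $\Qo$ is bounded on $L^{p,2}(\omega)$), I would run the parallel direct argument without invoking duality. The analogous test function is
\[
\varphi(w)=\omega'(w/|w|)\,\chi_{T(B)}(w)\,e^{i\alpha(w)},
\]
with $T(B)=\{s\z:1-r\le s\le 1-r/2,\;\z\in B\}$ and $\alpha(w)=-\arg\bigl[(I+R/n)\Be(z_0,w)\bigr]$, chosen so that $\varphi(w)(I+R/n)\Be(z_0,w)\ge 0$. The explicit formula $(I+R/n)\Be(z,w)=(n+z\bar w)/(n(1-z\bar w)^{n+2})$ gives $|(I+R/n)\Be(z_0,w)|\approx r^{-(n+2)}$ on $T(B)$, whence $|\Qo\varphi(z_0)|\gtrsim\omega'(B)/|B|$; and $|\varphi|^p=(\omega')^p\chi_{T(B)}$ together with $(\omega')^p\omega=\omega'$ yields $\|\varphi\|_{L^{p,2}(\omega)}^p\approx\omega'(B)$. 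The same Kobayashi-ball propagation applied to the holomorphic function $(I+R/n)\Be\varphi$, together with $\omega$-integration over $B(\zeta_0,cr)\supset B$, produces $\omega'(B)^{p-1}\omega(B)\lesssim C^p|B|^p$, i.e.\ $\omega\in A_p$ with $[\omega]_{A_p}\lesssim C^p$.

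The main obstacle in both arguments is the propagation step: converting a single-point interior estimate into a lower bound on an invariant Kobayashi ball large enough that its radial projection onto $\bB$ meets a comparable portion of $\omega$ (or of $\omega'$). The matching order of the a priori upper and lower bounds on the relevant holomorphic kernel integral, together with the automorphism invariance of the Kobayashi metric, makes this propagation uniform in $B$; the subtlety is choosing the hyperbolic radius small enough that the oscillation of the holomorphic function is dominated by its size at $z_0$.
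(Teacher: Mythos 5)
Your overall strategy (test against indicator-type functions adapted to an arbitrary nonisotropic ball $B$, extract a single--ball estimate, conclude $\omega\in A_p$) matches the paper, but the mechanism you use to beat the oscillation of the kernel is genuinely different, and that mechanism as written has a gap. The paper displaces the \emph{observation} square $S_b$ tangentially to hyperbolic distance $\kappa$ from the support square $S_a$ (so $|1-b\overline a|=\kappa(1-|a|)$, $|b|=|a|$) and uses the $\tfrac12$-H\"older continuity of the Bergman kernel in the nonisotropic quasi-metric, $|\Be(z,w)-\Be(z,a)|\lesssim |1-w\overline a|^{1/2}/|1-z\overline a|^{n+3/2}$, to make $\Be(z,w)$ essentially constant over $S_b\times S_a$ once $\kappa$ is taken large; thus the lower bound holds on the \emph{entire} tent $S_b$ at once, with no phase correction and no propagation. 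The comparability $\omega(B_b)\approx\omega(B_a)$ is then obtained by taking $\psi\equiv 1$ and interchanging $a$ and $b$. You instead sit at the hyperbolic centre $z_0=(1-r/2)\zeta_0$ of the tent over $B$, phase-correct the test function so the integral is real and positive at $z_0$, and try to propagate the pointwise lower bound by an invariant Schwarz--Pick estimate.

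The propagation step is where the argument breaks. You claim the propagated Kobayashi ball contains every $z=s\eta$ with $\eta\in B(\zeta_0,cr)$ for some universal $c>1$. For the radial projection of a Kobayashi ball about $z_0=(1-r/2)\zeta_0$ to cover $B(\zeta_0,r)$, its hyperbolic radius must be bounded below by a constant of order $1$. On the other hand, the upper bound on $|(I+R/n)\Ca\psi|$ over a Kobayashi ball of radius $R$ about $z_0$ is $\lesssim e^{2R(n+1)}\omega(B)/r^{n+1}$ (since $1-|z|\gtrsim e^{-2R}r$ there), so the ratio of $|F(z_0)|$ to $\sup|F|$ on that ball degenerates exponentially in $R$, and the Schwarz/Cauchy argument only yields a lower bound on a ball of hyperbolic radius $\lesssim R e^{-2R(n+1)}$, which is maximized around $R\approx 1/(2(n+1))$ and gives a universal but \emph{small} radius (of order $1/n$). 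This is far too small to project onto $B$ when $z_0$ is only at depth $r/2$. The claim ``for some universal $c>1$'' is therefore unjustified. The fix is either to push $z_0$ substantially deeper (to depth $\sim nr$, and then quantify the degradation of the lower bound), or---as the paper does---to observe at a tangentially displaced square of the same depth, where the kernel's $\tfrac12$-H\"older continuity makes the kernel nearly constant and both the phase correction and the propagation become unnecessary.

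A secondary issue: in the second hypothesis your test function $\varphi=\omega'\chi_{T(B)}e^{i\alpha}$ satisfies $\|\varphi\|_{L^{p,2}(\omega)}^p\approx\omega'(B)$, which is not known to be finite a priori---finiteness of $\omega'(B)$ for all $B$ is part of the conclusion. The paper sidesteps this by testing against bounded, compactly supported $\varphi$ (namely $\varphi(s\eta)=(1-s)\chi_a(\eta)\psi(\eta)$ with $\psi$ continuous) and deducing the $A_p$ condition in the form of a maximal-function estimate, $\bigl(\tfrac1{\sigma(B)}\int_B\psi\,d\sigma\bigr)^p\lesssim\tfrac1{\omega(B)}\int_B\psi^p\omega\,d\sigma$. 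You should either truncate $\omega'$ or adopt the same strategy to keep the argument honest.
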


\begin{proof}
	The proof of this proposition follow using standard arguments (see for instance \cite{Co-Fe} or 
	\cite{Bekolle}) and, for a sake of completeness, we will give a sketch of it.

For $0\ne a\in \B$, let $a^*=a/|a|$,   $B_a=\{\z\in\bB:\,|1-\z\overline a^*|<1-|a|\}$ and let $S_a$ be the nonisotropic square
$$
S_a=\{w=s\eta \in\overline{\B}: 1-s\le 1-|a|, |1-\eta \overline a^*|\le 1-|a|, \eta\in \bB\}.
$$

Note that if $w=s\eta\in S_a$, then
$$
1-|a|\le |1-w\overline a|\le 1-|a|+1-s+|1-\eta\overline {a^*}|\le 3(1-|a|).
$$
Since $d(z,w)=|1-z\overline w|^{1/2}$ satisfies the triangle inequality (see \cite[Proposition 5.1.2]{Ru}), 
 for  $\kappa>0$ large enough, there exists $0<r_\kappa<1$  such that:
 for each $a\in\B$, $|a|>r_\kappa$, there exists $b\in\B$  satisfying $|a|=|b|$,  $|1-b\overline a|=\kappa (1-|a|)$ and 
$$
|1-z\overline{w}|\approx |1-z\overline{a}|\approx \kappa (1-|a|)
\approx \kappa |1-w\overline{a}|,\quad  \text{ for any $w\in S_a$ and $z\in S_b$},
$$
where the constants in the equivalences do not depend on $z,w,a,b$ and $\kappa$.
Thus, 
\begin{align*}
|\Be(z,w)-\Be(z,a)|&\le c(n)\frac{|1-w\overline a|^{1/2}}{|1-z\overline a|^{n+3/2}}
\le \frac{c'(n)}{\sqrt{\kappa}}\frac{1}{|1-z\overline a|^{n+1}}.
\end{align*}

So, choosing $\kappa\ge (2 c'(n))^2$,  for any $0\le \varphi \in L^1(d\nu)$ we have 
\begin{equation} \label{eqn:Be1}
\Chi_b(z) |\Be(\Chi_a\varphi)(z)|\ge \frac{1}{2}\frac{\Chi_b(z)}{|1-z\overline a|^{n+1}}\int_{S_a}\varphi d\nu
\approx\frac{\Chi_b(z)}{(1-|a|^2)^{n+1}}\int_{S_a}\varphi d\nu.
\end{equation}
where $\Chi_a$ and $\Chi_b$ denote the characteristic function of $S_a$ and $S_b$, respectively, 
and the constants in the last equivalence depend only on $n$ and $\kappa$.

Analogously we have 
\begin{equation}\label{eqn:Be2} 
\Chi_b(z)\frac{1-|z|^2}{(1-|a|^2)^{n+2}}\int_{S_a}\varphi d\nu\lesssim  \Chi_b(z)|\Qo(\Chi_a\varphi)(z)|,
\end{equation}
where, as above,  the constants in the inequality depend only on $n$ and $\kappa$.

Let $\psi\ge 0$ be a continuous function on $\overline\B$ and for $s>0$, 
let $\varphi(s\eta)=(1-s)\Chi_a(\eta)\psi(\eta)\in L^{p,2}(\omega)$. 
Then, by integration in polar coordinates and using that $1-|a|=1-|b|$, we have
\begin{align*}
\int_\B \Chi_a(w) \varphi(w)d\nu(w) &\approx (1-|a|^2)^2\int_{\bB}\Chi_a(\eta)\psi(\eta) d\sigma(\eta),\quad\\
 \|\Chi_a \varphi\|_{L^{p,2}(\omega)} &\approx (1-|a|)\int_\bB\Chi_a(\eta)\psi(\eta) \omega(\eta) d\sigma(\eta),\\
\int_{\bB}\Chi_b(\eta)  \omega(\eta) d\sigma(\eta) &\approx \omega(B_b) \quad\text{and}\\
\|(1-|z|^2)\Chi_b(z)\|_{L^{p,2}(\omega)} &\approx (1-|a|)\omega(B_b)^{1/p}.
\end{align*}

Therefore,  \eqref{eqn:Be1} and \eqref{eqn:Be2} give  
\begin{align*}
\frac{\omega(B_b)^{1/p}}{(1-|a|^2)^n}\int_{B_a}\psi d\sigma
&\lesssim  \|\Be:L^{p,2}(\omega)\to H^p(\omega)\| \left(\int_{B_a}\psi^p\omega d\sigma\right)^{1/p}\text{and}\\
\frac{\omega(B_b)^{1/p}}{(1-|a|^2)^{n}}\int_{B_a}\psi d\sigma
&\lesssim  \|\Be:L^{p,2}(\omega)\to F^{p,2}_0(\omega)\| \left(\int_{B_a}\psi^p\omega d\sigma\right)^{1/p}.
\end{align*}
These inequalities applied to the function $\psi=1$ gives $\omega(B_b)\lesssim \omega(B_a)$. 
Interchanging $a$ and $b$ we also obtain $\omega(B_b)\approx \omega(B_a)$. 
Hence,  in both cases for any $|a|>r_\kappa$ 
and $\psi$ a continuous function on $\bB$, we have
$$
\left(\frac{1}{\sigma(B_a)}\int_{B_a}\psi d\sigma \right)^p\lesssim \frac{1}{\omega(B_a)}\int_{B_a}\psi^p\omega d\sigma.
$$

Since $\bB$ is the finite union of sets $B_{a_j}$, $|a_j|>r_\kappa$, and the space of continuous 
functions on $\bB$ is dense in $L^p(\omega)$,  the above inequality holds for any $B_a$ 
and any $\psi\in L^p(\omega)$. 
This  is equivalent to $\omega\in A_p$ (see for instance \cite[p.195]{Stein}).
\end{proof}

\begin{rem}\label{rem:contraexem}
It is well known that for the the boundedness of  $\Be$ on $L^p(\B)$  is equivalent to the  boundedness on $L^p(\B)$ of the integral operator  $|\Be|$ associated to the kernel $|\Be(z,w)|$. In our situation, even for $n=1$, we have that the integral operator  $|\Be|$ is not bounded from $L^{p,2}$ to $L^p$. 
For instance, consider the function $\varphi(w)=\left(\log\frac{2}{1-|w|^2}\right)^{-1}\in L^{p,2}$. We have,
\begin{align*}
\int_{\D}\frac{\left(\log\frac{2}{1-|w|^2}\right)^{-1}}{|1-z\overline w|^2}d\nu(w)
&\gtrsim \int_0^1 r \left(\log\frac{2}{1-r^2}\right)^{-1} \int_0^{1}\frac{dt}{(1-|z|^2+1-r^2+t)^2}dr\\
&\gtrsim \int_0^1 \frac{r\left(\log\frac{2}{1-r^2}\right)^{-1} }{1-|z|^2+1-r^2}dr\gtrsim \log\log\frac{2}{1-|z|^2}.
\end{align*}
Consequently, $|\Be|(\varphi)$ has not boundary values.
\end{rem}

\section{Proof of the estimate \eqref{eqn:Bergman} in Theorem \ref{thm:Bergman}}\label{sec:estimate}
In Proposition \ref{prop:neccond}, we have proved that if the Bergman operator 
is bounded from $L^{p,2}(\omega)$ to $H^p(\omega)$, then $\omega\in A_p$. 
In order to finish the proof of Theorem \ref{thm:Bergman}, we first observe that condition \eqref{eqn:Bergman} 
can be rewritten by duality as an estimate of the weighted Triebel-Lizorkin norm of the Cauchy operator. Namely,
we have the following result:

\begin{prop}\label{lem:adjBergman}
If $1<p<\infty$, we have that the following conditions are equivalent:
\begin{enumerate}
\item\label{enum:bergman1}
 For any  $\omega\in A_p$ and any $\varphi\in L^{p,2}(\omega)$, 
 \begin{equation}\label{eqn:Bergman3}
\|\Be(\varphi)\|_{H^p(\omega)}\leq C(p,n) [\omega]_{A_p}^{\max\{1,1/(2(p-1))\}}\|\varphi\|_{L^{p,2}(\omega)}.
\end{equation}
\item\label{enum:bergman2} For $\omega\in A_p$ and any $\psi \in L^{p}(\omega)$,
\end{enumerate}
\begin{equation}\label{eqn:adjointlerner}\begin{split}
\|{\mathcal C}(\psi)\|_{F^{p,2}_0(\omega)}
&=\left\|(1-|z|^2)\left(I+\frac{R}{n}\right){\mathcal C}(\psi)(z)\right\|_{L^{p,2}(\omega)}\\
&\leq C(p,n) [\omega ]^{\max\{1/2,1/(p-1)\}}\|\psi\|_{L^{p}(\omega)}.
\end{split}
\end{equation}
Moreover, $\|\Be: L^{p,2}(\omega)\to L^{p}(\omega)\|=\|\Ca:L^{p'}(\omega')\to F_0^{p',2}(\omega')\|$.
\end{prop}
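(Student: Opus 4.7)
My plan is to set up an adjoint identity between $\Be$ and the operator $T_\Ca(\psi):=(1-|z|^2)(I+R/n)\Ca(\psi)$ with respect to the two pairings $\langle\cdot,\cdot\rangle_\bB$ and $\langle\cdot,\cdot\rangle_\B$ introduced in Section~\ref{sec:preliminaries}, and then to read off both the \emph{Moreover} identity and the equivalence (\ref{enum:bergman1})$\Leftrightarrow$(\ref{enum:bergman2}) by substituting $(p',\omega')$ in place of $(p,\omega)$.

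First I would verify, for $\varphi$ smooth with compact support in $\B$ and $\psi$ a polynomial on $\bB$, the adjoint identity
$$\langle \Be(\varphi),\psi\rangle_\bB=\langle \varphi,T_\Ca\psi\rangle_\B.$$
This is a direct Fubini computation that uses the kernel relation $\Be(z,w)=(I+R/n)\Ca(z,w)$ to turn the inner integral over $\bB$ into $(I+R/n)\Ca(\psi)(w)$; the factor $(1-|w|^2)$ is absorbed when one passes between $d\nu(w)$ and the measure $d\nu(w)/(1-|w|^2)$ that defines $\langle\cdot,\cdot\rangle_\B$.

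Combining this identity with the standard duality $L^p(\omega)^*=L^{p'}(\omega')$ and Proposition~\ref{prop:dualLpq}, the general Banach-space fact that an operator and its adjoint have the same norm yields
$$\|\Be:L^{p',2}(\omega')\to L^{p'}(\omega')\|=\|T_\Ca:L^{p}(\omega)\to L^{p,2}(\omega)\|=\|\Ca:L^p(\omega)\to F^{p,2}_0(\omega)\|,$$
the last equality being just the definition \eqref{eqn:TL}. Since $\Be(\varphi)$ is automatically holomorphic, its $L^p(\omega)$- and $H^p(\omega)$-norms coincide, and swapping $(p,\omega)\leftrightarrow(p',\omega')$ delivers the \emph{Moreover} statement.

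Finally, for the equivalence of the two inequalities I would plug (\ref{enum:bergman2}) (for the pair $(p',\omega')$, which is the same hypothesis as (\ref{enum:bergman2}) for $(p,\omega)$ under $\omega\leftrightarrow\omega'$) into the norm identity and simplify using $1/(p'-1)=p-1$ and $[\omega']_{A_{p'}}=[\omega]_{A_p}^{1/(p-1)}$: this turns $[\omega']_{A_{p'}}^{\max\{1/2,\,1/(p'-1)\}}$ into $[\omega]_{A_p}^{\max\{1/(2(p-1)),\,1\}}$, which is exactly the exponent in \eqref{eqn:Bergman3}. The converse implication is symmetric. The only delicate technical point I expect is the justification of the Fubini exchange, since the Bergman kernel is singular at the boundary; I would handle this by first working on the dense subspace of polynomials (respectively, smooth compactly supported $\varphi$), where absolute convergence is clear, and then extending by density using the dualities already set up.
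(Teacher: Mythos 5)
Your proposal is correct and follows essentially the same route as the paper: both establish the adjoint identity $\langle\Be(\varphi),\psi\rangle_\bB=\langle\varphi,(1-|z|^2)(I+R/n)\Ca(\psi)\rangle_\B$ by Fubini on dense classes, then invoke the dualities $L^p(\omega)^*=L^{p'}(\omega')$ and Proposition~\ref{prop:dualLpq} together with $[\omega']_{A_{p'}}=[\omega]_{A_p}^{1/(p-1)}$ to match the exponents and read off the norm identity. The only difference is that you make explicit the density/absolute-convergence justification of the Fubini step, which the paper leaves implicit.
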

\begin{proof}
Since $$\left(I+\frac1{n}R\right)\frac1{(1-z\overline{\zeta})^n}=\frac1{(1-z\overline\zeta)^{n+1}},$$ 
for  any smooth functions $\varphi$ and $\psi$ on $\B$ and $\bB$, respectively,  Fubini's Theorem gives that
$$
\langle \Be(\varphi),\psi\rangle_{\bB}
=\left\langle \varphi(z),(1-|z|^2)\left(I+\frac1{n}R\right){\Ca}(\psi)(z)\right\rangle_\B.
$$  

Hence, \eqref{eqn:Bergman3} is equivalent to 
\begin{equation}\label{eqn:adjointlerner1}\begin{split}
\|{\mathcal C}(\psi)\|_{F^{p',2}_0(\omega')}
&=\left\|(1-|z|^2)\left(I+\frac{R}{n}\right){\mathcal C}(\psi)(z)\right\|_{L^{p',2}(\omega')}\\
&\leq C(p,n) [\omega']^{\max\{1/2,1/(p'-1)\}}\|\psi\|_{L^{p'}(\omega')},
\end{split}
\end{equation}
which is also equivalent to \eqref{enum:bergman2}.
\end{proof}

Observe that the key estimate \eqref{eqn:adjointlerner1} is a non isotropic version of Theorem 1.1 in \cite{Le0}, 
which is based in the intrinsic square function introduced in \cite{Wi}. 
The original proof heavily relies in the convolution in $\R^n$. In our situation, 
there is no such convolution and we instead follow closely 
some of the main ideas in Theorem 1.1 in \cite{Le}.

Although we state our main results for the operator $\left(I+\frac{R}{n}\right)\Ca$, 
all the norm-operator estimates also hold for any operator 
$\left(\alpha I+\beta R\right)\Ca$, $\alpha,\beta \in\R$ 
(see Remark \ref{rem:equivnormsC} below).

\subsection{Preliminary results}\quad\par

In the proof of our main results we will use the dyadic decomposition 
of a quasi-metric space of \cite{Chr} (see also \cite{HyKa}  and \cite{Ka}).
We recall that $\rho$ is a quasi-metric on a space $X$ if it satisfies the axioms 
of a metric except for the triangle inequality, 
wich is assumed in a weaker form: there exists $A_0\geq 1$ such that for any $x,y,z\in X$, 
$\rho(x,y)\leq A_0(\rho(x.z)+\rho(z,y))$. 
The quasi-metric space $(X,\rho)$ is also assumed to satisfy the following geometric doubling property:
 there exists $N\in\N$ 
such that for every $x\in X$ and for every $r>0$, the ball $B(x,r)=\{y\in X;\, \rho(x,y)<r\}$ 
can be covered by at most $N$ balls $B(x_i,r/2)$.
 We will state the decomposition for $\bB$ and the quasi-metric $\rho(\zeta,\eta)=|1-\zeta\overline{\eta}|$. 
Observe that $A_0=2$. 

\begin{prop}\label{prop:dyadicdec} Given a fixed parameter $0<\delta<1$, 
small enough and a fixed point $x_0\in\bB$, 
there exists  a finite collection of families of sets, ${\mathcal D}^j$, $j=1,\dots,M$, called the adjacent dyadic systems, 
such that each ${\mathcal D}^j$ is a family of Borel sets $Q_\alpha^k$, $k\in\Z$, $\alpha\in{I}_k$, 
called the dyadic cubes, 
which are associated with points $\zeta_\alpha^k$, which we will call the center points of the cubes $Q_\alpha^k$, 
having the following properties:
\begin{enumerate}
\item\label{item:dec.1} $\bB=\cup_{\alpha\in I_j} Q_\alpha^k$ (disjoint union), for each $k\in \Z$.
\item\label{item:dec.2} if $k<l$, then either $Q_\beta^l\cap Q_\alpha^k=\emptyset$ or $Q_\beta^l\subset Q_\alpha^k$.
\item\label{item:dec.3}  There exist $c_1,C_1>0$ such that 
$B(\zeta_\alpha^k,c_1\delta^k)\subset Q_\alpha^k\subset B(\zeta_\alpha^k,C_1\delta^{k})= B(Q_\alpha^k).$
\item\label{item:dec.4} if $k\leq l$ and $Q_\beta^l\subset Q_\alpha^k$, then $B(Q_\beta^l)\subset B(Q_\alpha^k)$.
\item\label{item:dec.5}  For any $k\in\Z$, there exists $\alpha$ such that $x_0=\zeta_\alpha^k$, the center point of $Q_\alpha^k$.
\item\label{item:dec.6} There exists $C>0$ (only depending on $A_0$ and $\delta$) such that for any nonisotropic ball 
$B(\zeta,r)\subset \bB$, with $\delta^{k+3}<r\leq \delta^{k+2}$, there exists $j$ and $Q_\alpha^k\in{\mathcal D}^j$ such that
$B(\zeta,r)\subset Q_\alpha^k$ and ${\bf diam}\,\, Q_\alpha^k\leq Cr$.
\end{enumerate}

 The family ${\mathcal D}=\bigcup_{j=1}^M {\mathcal D}^j$ is called a dyadic decomposition of $\bB$ 
and we say that the set $Q_\alpha^k$ is a dyadic cube of generation $k$ centered at $\zeta_\alpha^k$ 
with radius $l(Q_\alpha^k)=\delta^k$.
 \end{prop}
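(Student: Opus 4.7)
The statement is essentially the Christ--Hytönen--Kairema adjacent dyadic system theorem, specialized to the sphere with the quasi-metric $\rho(\zeta,\eta)=|1-\zeta\overline\eta|$. My plan is to first construct a single dyadic system $\mathcal D^1$ in the spirit of Christ, verifying (i)--(v), and then shift the construction $M-1$ additional times to build $\mathcal D^2,\dots,\mathcal D^M$ so that property (vi) is secured by a pigeonhole argument.

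To build $\mathcal D^1$, I would fix $\delta<1$ small (the threshold depends only on $A_0=2$ and the doubling constant $N$) and, for each generation $k\in\Z$, select a maximal $\delta^k$-separated set $\{\zeta_\alpha^k\}_{\alpha\in I_k}$ of center points by Zorn's lemma / greedy selection, with the point $x_0$ forced to belong to every such net (which yields (v)). The geometric doubling ensures each $I_k$ is countable and each ball $B(\zeta,C\delta^k)$ contains boundedly many centers. Next I would impose a nesting ancestor map $\alpha\mapsto\hat\alpha$ sending each center of generation $k+1$ to a nearest center of generation $k$, so that $\rho(\zeta_\beta^{k+1},\zeta_{\hat\beta}^k)<\delta^k$. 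Following Christ, define preliminary cells $\widetilde Q_\alpha^k=\bigcup\{\widetilde Q_\beta^{k+1}:\hat\beta=\alpha\}$ starting from the bottom, then pass to Borel sets $Q_\alpha^k$ sandwiched between the inner and outer balls $B(\zeta_\alpha^k,c_1\delta^k)\subset Q_\alpha^k\subset B(\zeta_\alpha^k,C_1\delta^k)$ by adjusting boundaries. Properties (i)--(iv) then follow directly from the tree structure and the separation/covering radii, once $\delta$ has been chosen small enough for the quasi-triangle inequality to force the inclusions.

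For the adjacent systems, the plan is to translate or perturb the centers: for each $j=1,\dots,M$ I would select an alternative net $\{\zeta_\alpha^{k,j}\}$ shifted by a vector of magnitude $\approx\delta^k/3$ in one of finitely many prescribed directions, and run the same Christ construction to obtain $\mathcal D^j$. The number $M$ is chosen, using doubling, so that every point of $\bB$ at scale $\delta^k$ lies deep inside (far from the boundary of) some cube $Q_\alpha^k\in\mathcal D^j$ for at least one~$j$.

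The main obstacle is (vi), which is the content of the Hytönen--Kairema refinement of Christ. Given a ball $B(\zeta,r)$ with $\delta^{k+3}<r\le\delta^{k+2}$, I need to exhibit a $j$ and a cube $Q_\alpha^k\in\mathcal D^j$ containing $B(\zeta,r)$ with $\mathrm{diam}\,Q_\alpha^k\lesssim r$. The key step is showing that for one of the $M$ shifted systems, the center $\zeta$ is within distance $\leq c_1\delta^k-A_0 r$ of some $\zeta_\alpha^{k,j}$; then the quasi-triangle inequality gives $B(\zeta,r)\subset B(\zeta_\alpha^{k,j},c_1\delta^k)\subset Q_\alpha^{k,j}$, and the upper inclusion in (iii) bounds the diameter by $2A_0 C_1\delta^k\lesssim r$. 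This reduces to the combinatorial claim that finitely many shifted nets of spacing $\delta^k$ suffice to guarantee that every point lies in the inner core of at least one cube, which is exactly what the doubling property and a careful counting of translates provide. Since this proposition is cited from \cite{Chr} and \cite{Ka} (and \cite{HyKa}), I would ultimately defer the technical verification to those references rather than reproduce it in full.
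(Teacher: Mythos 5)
The paper gives no proof of Proposition~\ref{prop:dyadicdec}; it simply states it and invokes \cite{Chr}, \cite{HyKa} and \cite{Ka}, so your decision to sketch the Christ/Hyt\"onen--Kairema construction and then defer the technical verification to those references matches the paper's treatment. One caveat about your sketch: describing the $M$ adjacent systems as coming from nets ``shifted by a vector of magnitude $\approx\delta^k/3$ in one of finitely many prescribed directions'' is a Euclidean heuristic that does not transfer to $\bB$ equipped with the nonisotropic quasi-metric $\rho(\zeta,\eta)=|1-\zeta\overline\eta|$, where there is no translation structure. In \cite{HyKa} the adjacent systems are obtained by a combinatorial (in fact probabilistic) choice of the parent-selection rule among finitely many alternatives, not by translating a single net; the pigeonhole step securing property~\eqref{item:dec.6} is then run over these finitely many parent rules. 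Since you ultimately hand the details back to the cited works this does not invalidate the argument, but the ``translate the centers'' picture should not be taken literally in this setting.
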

 \begin{rem} It is immediate to check that from properties  \eqref{item:dec.3}, \eqref{item:dec.1} and 
\eqref{item:dec.2},  that  there exists $\varepsilon>0$  (only depending on the dimension $n$ and on $\delta$)  and
for any $Q_1^k\in{\mathcal D}^j$ there exists at least one $Q_2^{k+1}\in {\mathcal D}^{j}$  
so that $Q_2^{k+1}\subset Q_1^k$ and 

  \begin{equation}\label{item:dec.8}|Q_2^{k+1}|\geq \varepsilon |Q_1^k|.\end{equation}
\end{rem}

Before we go back to the proof of Proposition \ref{lem:adjBergman}, 
we need to introduce some more notations and results. 
The non-increasing rearrangement of a measurable function $\psi$ on $\bB$ is defined by
$$\psi^*(t)=\inf \{\alpha>0\,;\, |\{\zeta\in\bB\,;\, |\psi(\zeta)|>\alpha\}|\leq t\}
=\sup_{C\subset\bB\,;|C|=t}\inf _{\zeta\in C} |\psi(\zeta)|,\qquad 0<t<\infty.$$
It is immediate to check that
$$|\{\zeta\in \bB\,;\, |\psi(\zeta)|>\lambda \}|=|\{ t>0\,;\, \psi^*(t)>\lambda \}|.$$

Let $\psi$ be a measurable function on $\bB$. 
If  $Q$ is a dyadic cube, the local mean oscillation of $\psi$ on $Q$ is given by
$$\omega_\lambda(\psi;Q)= \inf_{c\in \R} ((\psi-c)\Chi_Q)^*(\lambda|Q|),\qquad 0<\lambda<1.$$

We will denote by $m_Q(\psi)$, the median value of $\psi$ over $Q$, a (possibly non unique) real number such that
$$\max\left\{|\{\zeta\in Q\,;\, \psi(\zeta)>m_Q(\psi)\}|,|\{\zeta\in Q\,;\, \psi(\zeta)<m_Q(\psi)\}| \right\}\leq |Q|/2.$$
It is immediate to check that
\begin{equation}\label{eqn:median}
|m_Q(\psi)|\leq(\psi\Chi_Q)^*(|Q|/2).
\end{equation}
Next, given a dyadic cube $Q_0\in {\mathcal D}^j$, let us denote ${\mathcal D}^j(Q_0)$ the dyadic cubes of ${\mathcal D}^j$ 
contained in $Q_0$. The dyadic local sharp maximal function $m_{\lambda;Q_0}^{\#,d}\psi$ is defined by
\begin{equation}\label{eqn:sharp}
m_{\lambda;Q_0}^{\#}\psi(\zeta)=\sup_{\zeta\in Q'\in{\mathcal D}^j(Q_0)}\omega_\lambda(\psi;Q').
\end{equation}
It is also well known (see for instance \cite{JaTo}) that a.e $\zeta\in\bB$,
$$m_{\lambda;Q_0}^{\#}\psi(\zeta)\lesssim M[\psi](\zeta),$$

 If $Q_0\in{\mathcal D}^j$, a family of sets ${\mathcal S}(Q_0)$ is sparse in $Q_0$ with respect to the dyadic decomposition 
${\mathcal D}$, if
$$
{\mathcal S}(Q_0)=\cup_{m\geq 0}C_m,
$$
where
\begin{enumerate}
	\item Each $C_m$ is a family of sets in ${\mathcal D}^j$ which are subsets of $Q_0$.
	\item $C_0=\{ Q_0\}$.
	\item The elements of each family $C_m$ are pairwise disjoint.
	\item For any $m>0$, every $Q\in C_m$ is a subset of an element of $C_{m-1}$.
	\item For any $Q_1\in C_m$ we have that
	\begin{equation}\label{eqn:sparse}
	\left|\cup_{Q\in C_{m+1}} Q\cap Q_1\right| \leq \frac{|Q_1|}{2}.
	\end{equation}
\end{enumerate}
We denote \begin{equation}\label{eq:sparse}E_{Q_1}= Q_1\setminus \cup_{Q\in C_{n+1}} Q\cap Q_1.\end{equation}
We then have that $|E_{Q_1}|\geq|Q_1|/2$.

The proof of Proposition \ref{lem:adjBergman} is based in a homogeneous version of the key estimate in \cite{Le4}, that it is proved in \cite{An-Va}. Namely,

\begin{thm}\label{thm:homoglerner}

Let $\psi$ a measurable function on $\bB$ and $Q_0\in {\mathcal D}^j$ a fixed cube and $\varepsilon$ as in \eqref{item:dec.8}. Then there exists a (possibly empty) sparse family of cubes ${\mathcal S}(Q_0)$ 
such that for a.e. $\zeta\in Q_0$, 
\begin{equation}\label{eqn:homoglerner}
|\psi(\zeta)-m_{Q_0}(\psi)|\leq m^{\#}_{\varepsilon/4,Q_0}(\psi)(\zeta)
+ \sum_{Q\in {\mathcal S}(Q_0)} \omega_{\varepsilon/4}(\psi,Q)\Chi_Q(\zeta).
\end{equation}
\end{thm}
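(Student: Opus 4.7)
The plan is to adapt Lerner's iterative local-oscillation decomposition to the adjacent dyadic system $\mathcal{D}^j$ from Proposition \ref{prop:dyadicdec}, following the homogeneous-space version of \cite{An-Va}. The sparse family $\mathcal{S}(Q_0)$ will be built by a Calder\'on-Zygmund-type stopping time, and the pointwise bound \eqref{eqn:homoglerner} will be obtained by telescoping medians along the resulting chain of cubes.

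First I would record the median/oscillation comparison: from \eqref{eqn:median} and $\omega_\lambda(\psi;Q)=\inf_c((\psi-c)\Chi_Q)^*(\lambda|Q|)$, whenever $\lambda<1/2$ the median $m_Q(\psi)$ lies within $\omega_\lambda(\psi;Q)$ of any near-minimizing centering constant; consequently
$$
E_Q:=\{\zeta\in Q:|\psi(\zeta)-m_Q(\psi)|>2\omega_\lambda(\psi;Q)\}\quad\text{satisfies}\quad |E_Q|\le \lambda|Q|.
$$
Taking $\lambda=\varepsilon/4$ gives $|E_Q|\le (\varepsilon/4)|Q|$ for every $Q\in\mathcal{D}^j(Q_0)$.

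Next, set $C_0=\{Q_0\}$ and, given $C_m$, for each $Q\in C_m$ let $C_{m+1}(Q)$ be the maximal cubes $Q'\in\mathcal{D}^j(Q)$, $Q'\subsetneq Q$, with $|Q'\cap E_Q|>\tfrac12|Q'|$. Maximality and disjointness yield
$$
\sum_{Q'\in C_{m+1}(Q)}|Q'|\le 2|E_Q|\le \tfrac{\varepsilon}{2}|Q|\le \tfrac12|Q|,
$$
which is the sparseness \eqref{eqn:sparse}. Iterating, $\bigl|\bigcup_{Q\in C_m}Q\bigr|\le 2^{-m}|Q_0|$, so for a.e.\ $\zeta\in Q_0$ there is a finite maximal chain $Q_0\supsetneq Q_1\supsetneq\cdots\supsetneq Q_N$ with $Q_k\in C_k$ and $\zeta\in Q_N\setminus\bigcup C_{N+1}$. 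The dyadic Lebesgue differentiation theorem (valid for $\mathcal{D}^j$ thanks to \eqref{item:dec.3}) and the maximality of the stopping cubes yield $\zeta\notin E_{Q_N}$, hence $|\psi(\zeta)-m_{Q_N}(\psi)|\le 2\omega_{\varepsilon/4}(\psi;Q_N)$. For each $k\ge 1$, the parent $\widetilde Q_k$ of $Q_k$ in $\mathcal{D}^j(Q_{k-1})$ was \emph{not} selected, so $|\widetilde Q_k\cap E_{Q_{k-1}}|\le\tfrac12|\widetilde Q_k|$; more than half of $\widetilde Q_k$ then sits in the good set, which forces $|m_{\widetilde Q_k}(\psi)-m_{Q_{k-1}}(\psi)|\le 2\omega_{\varepsilon/4}(\psi;Q_{k-1})$, while the child-to-parent comparison $|m_{Q_k}(\psi)-m_{\widetilde Q_k}(\psi)|$ is controlled using the fat-child bound \eqref{item:dec.8}. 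Telescoping gives
$$
|\psi(\zeta)-m_{Q_0}(\psi)|\lesssim \omega_{\varepsilon/4}(\psi;Q_0)+\sum_{k=1}^N \omega_{\varepsilon/4}(\psi;Q_k),
$$
and since $Q_0\in\mathcal{D}^j(Q_0)$ one has $\omega_{\varepsilon/4}(\psi;Q_0)\le m^{\#}_{\varepsilon/4,Q_0}(\psi)(\zeta)$ by \eqref{eqn:sharp}, while the sum equals $\sum_{Q\in\mathcal{S}(Q_0)}\omega_{\varepsilon/4}(\psi;Q)\Chi_Q(\zeta)$ along the chain. Absorbing the residual constants into the oscillation parameter (by a preliminary refinement $\varepsilon\mapsto c\varepsilon$) yields \eqref{eqn:homoglerner}.

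The main obstacle is the joint calibration of the oscillation parameter $\varepsilon/4$, the stopping threshold $1/2$, and the fat-child constant $\varepsilon$ of \eqref{item:dec.8}: in $\mathbb{R}^n$ the clean $2^{-n}$ subdivision makes the parent-to-child median comparison automatic, whereas in the quasi-metric setting only a one-sided lower bound on the smallest child is available. Pinning down the median-jump estimate across a stopping generation with this weaker information is precisely the homogeneous refinement worked out in \cite{An-Va}, and is the step one has to carry over with care.
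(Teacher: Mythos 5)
The paper does not give its own proof of this statement: it is quoted as a known result from \cite{An-Va}, which adapts Lerner's Euclidean local mean oscillation decomposition \cite{Le4} to spaces of homogeneous type. Your blind sketch follows the right overall framework --- stopping time on the exceptional sets $E_Q=\{|\psi-m_Q(\psi)|>2\omega_{\varepsilon/4}(\psi;Q)\}$, the sparse family built from the maximal subcubes with $|Q'\cap E_Q|>\tfrac12|Q'|$, Lebesgue differentiation at the bottom of the chain, and the comparison $|m_{\widetilde Q_k}(\psi)-m_{Q_{k-1}}(\psi)|\le 2\omega_{\varepsilon/4}(\psi;Q_{k-1})$ obtained because the unselected parent $\widetilde Q_k$ has more than half its measure in the good set of $Q_{k-1}$ --- and all of those individual steps are sound.

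There is, however, a genuine gap exactly where you flag it, and the flag understates the difficulty. Your final telescoping
$$
|\psi(\zeta)-m_{Q_0}(\psi)|\lesssim\omega_{\varepsilon/4}(\psi;Q_0)+\sum_{k=1}^N\omega_{\varepsilon/4}(\psi;Q_k)
$$
requires $|m_{Q_k}(\psi)-m_{\widetilde Q_k}(\psi)|\lesssim\omega_{\varepsilon/4}(\psi;Q_{k-1})$, or at least $\lesssim\omega_{\varepsilon/4}(\psi;Q_k)$, so that it can join the sparse sum. What the fat-child property actually buys, via \eqref{eqn:median}, is $|m_{Q_k}(\psi)-m_{\widetilde Q_k}(\psi)|\lesssim\omega_{\varepsilon/4}(\psi;\widetilde Q_k)$, and the parent $\widetilde Q_k$ is in general \emph{not} a member of $\mathcal{S}(Q_0)$. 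Bounding that oscillation by $m^{\#}_{\varepsilon/4,Q_0}(\psi)(\zeta)$ is correct, but then summing over $k=1,\dots,N$ produces a factor of $N$ in front of the sharp maximal term, and $N$ is unbounded over $\zeta\in Q_0$. That factor is not a ``residual constant'' that can be absorbed by shrinking the oscillation parameter; as written the telescoping simply does not close to the stated inequality with a single $m^{\#}$ term. The fix implemented in \cite{An-Va} (as in Lerner's Euclidean proof) handles the median jump across the stopping generation so that only the genuinely sparse cubes --- not the intermediate parents --- contribute oscillations; your sketch does not contain that ingredient.

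A secondary issue: \eqref{item:dec.8} as stated asserts only that each cube has \emph{at least one} child of comparable measure, which need not be the selected cube $Q_k$. What your argument silently uses is the stronger fact that \emph{every} child of a cube in $\mathcal{D}^j$ has measure at least a fixed fraction of the parent's, which does hold on $(\bB,\rho,\sigma)$ by combining \eqref{item:dec.3} with Ahlfors regularity of $\sigma$, but it is not what the cited remark says. If you intend to appeal to the fat-child constant, you must state and justify this uniform lower bound rather than invoke \eqref{item:dec.8}.
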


\subsection{Main estimate}\quad\par
We begin recalling some technical lemmas. 
The first one is a version of a Whitney decomposition of an open set in $\bB$, which can be found in \cite{Ca-Or}.
 \begin{lem}\label{lemma:nonisotropicwhitney}
 Let $R>1$ and let $\Omega$ be an open set in $\bB$. Consider a dyadic adjacent system ${\mathcal D}^j$ in $\bB$, 
$j\in\{1,\dots,M\}$. If $j$ is fixed, Let $  \Lambda^j$ be the family of cubes $Q_\alpha^k\in{\mathcal D}^j$, 
which are maximal with respect to the property $ RB(Q_\alpha^k)\subset \Omega$.
 We then have:
 \begin{enumerate}
 \item\label{item:whit1} $\Omega=\bigcup_{Q_\alpha^k\in \Lambda^j} Q_\alpha^k$ and for the cubes in 
$\Lambda^j$, either $Q_\alpha^k\cap Q_{\alpha_1}^{k_1}=\emptyset$ or $Q_\alpha^k= Q_{\alpha_1}^{k_1} $.
 \item\label{item:whit2} 
There exists $K>0$ only depending on the constants $C_1$ and $\delta$  
of the definition of the dyadic adjacent system (see Proposition \ref{prop:dyadicdec}), 
such that for  every $Q_\alpha^k\in \Lambda^j$, we have that  
$KRB(Q_\alpha^k)\cap \Omega^c\neq \emptyset$.
 \item\label{item:whit3}  
There exists $C(C_1,\delta)>0$, only depending on the constants $C_1$ and $\delta$ of the definition 
of the dyadic adjacent system, such that 
$$
\sum_{Q_\alpha^k\in \Lambda^j} \Chi_{RB(Q_\alpha^k)}\leq C(C_1,\delta) \Chi_\Omega.
$$
 \end{enumerate}
 \end{lem}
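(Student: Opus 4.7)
The plan is to treat the three assertions in order, using only the nested/disjoint structure of dyadic cubes from Proposition~\ref{prop:dyadicdec}(\ref{item:dec.2}), the sandwich $B(\zeta_\alpha^k,c_1\delta^k)\subset Q_\alpha^k\subset B(\zeta_\alpha^k,C_1\delta^k)$ of (\ref{item:dec.3}), and the quasi-triangle inequality with constant $A_0$. For (\ref{item:whit1}), given $\zeta\in\Omega$ pick $r>0$ with $B(\zeta,r)\subset\Omega$; for every sufficiently large $k$ the unique cube $Q\in{\mathcal D}^j$ of generation $k$ containing $\zeta$ satisfies $RB(Q)\subset\Omega$ by (\ref{item:dec.3}). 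Walking up the chain of its dyadic ancestors (unique by (\ref{item:dec.2})) and stopping at the last one for which $RB(Q)\subset\Omega$ still holds produces a member of $\Lambda^j$ containing $\zeta$, and disjointness of $\Lambda^j$ is then automatic: two distinct comparable cubes in $\Lambda^j$ would contradict maximality of the smaller, so (\ref{item:dec.2}) leaves only disjointness.

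For (\ref{item:whit2}), fix $Q=Q_\alpha^k\in\Lambda^j$ and let $\widetilde Q=Q_\beta^{k-1}$ be its unique dyadic ancestor. Maximality forces $RB(\widetilde Q)\cap\Omega^c\ne\emptyset$, so it suffices to find $K=K(A_0,C_1,\delta)$ with $RB(\widetilde Q)\subset KRB(Q)$. Since $Q\subset\widetilde Q\subset B(\zeta_\beta^{k-1},C_1\delta^{k-1})$ one has $\rho(\zeta_\alpha^k,\zeta_\beta^{k-1})<C_1\delta^{k-1}$, and for $\eta\in RB(\widetilde Q)$ the quasi-triangle inequality gives
$$
\rho(\eta,\zeta_\alpha^k)\le A_0\bigl(RC_1\delta^{k-1}+C_1\delta^{k-1}\bigr)\le A_0C_1(R+1)\delta^{k-1},
$$
which is at most $KRC_1\delta^k$ as soon as $K\ge 2A_0/\delta$ (using $R\ge 1$).

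For (\ref{item:whit3}), fix $\zeta\in\Omega$ and let $I(\zeta)=\{Q\in\Lambda^j:\zeta\in RB(Q)\}$; the aim is to bound $|I(\zeta)|$ uniformly, in two steps. First, a packing estimate within each generation: the cubes in $I(\zeta)$ of generation $k$ are pairwise disjoint by (\ref{item:whit1}), each contains a ball of radius $c_1\delta^k$ by (\ref{item:dec.3}), and all their centers lie in $B(\zeta,A_0RC_1\delta^k)$, so the doubling property of $\bB$ bounds their number uniformly. Second, a generation-range estimate: (\ref{item:whit2}) combined with the quasi-triangle inequality produces, for each $Q\in I(\zeta)$, a point of $\Omega^c$ at distance $\lesssim\delta^{k_Q}$ from $\zeta$, giving an upper bound on $k_Q$ in terms of $d(\zeta,\Omega^c)$; the matching lower bound comes from $RB(Q)\subset\Omega$, which places $\Omega^c$ at distance $\ge RC_1\delta^{k_Q}$ from $\zeta_Q$, and hence (after absorbing the quasi-metric $A_0$-loss into $R$) at distance $\gtrsim\delta^{k_Q}$ from $\zeta$ itself. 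Together these bound the number of admissible generations in $I(\zeta)$, and multiplying the two counts yields the required constant.

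The main obstacle is the lower bound in the generation-range estimate of (\ref{item:whit3}): since the reverse-triangle inequality in a quasi-metric is lossy, one must arrange that the $R$-buffer around $\zeta_Q$ is large enough to survive the $A_0$-contraction down to $\zeta$; everything else reduces to a clean combination of disjointness of $\Lambda^j$, the ball-inside-cube inclusion (\ref{item:dec.3}), and the doubling property of $\bB$.
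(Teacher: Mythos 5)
The paper does not actually prove this lemma; it is imported from \cite{Ca-Or}, so there is no in-text proof to compare against, and your proposal must be judged on its own.

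Parts (\ref{item:whit1}) and (\ref{item:whit2}) are correct. For (\ref{item:whit1}), the only thing to add is the case $\Omega=\bB$, where $\Lambda^j$ may be empty (or where the ancestor chain never terminates); this case is irrelevant for the application but should be excluded or handled. For (\ref{item:whit2}) your computation $\rho(\eta,\zeta_\alpha^k)\le A_0C_1(R+1)\delta^{k-1}\le KRC_1\delta^k$ for $K\ge 2A_0/\delta$ is exactly right, modulo the edge case of a cube at the maximal generation having no parent.

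The gap is in the ``generation-range'' step of (\ref{item:whit3}). You correctly identify the lower bound $\delta^{k_Q}\lesssim d(\zeta,\Omega^c)$ as the main obstacle, but the proposed resolution --- ``arrange that the $R$-buffer around $\zeta_Q$ survives the $A_0$-contraction'' --- does not actually work, because $R$ appears with the same power on both sides of the estimate and cancels. Concretely: for $Q\in\Lambda^j$ with $\zeta\in RB(Q)$ and $\eta\in\Omega^c$, the only information you have is $\rho(\eta,\zeta_Q)\ge RC_1\delta^{k_Q}$ (since $RB(Q)\subset\Omega$) and $\rho(\zeta,\zeta_Q)<RC_1\delta^{k_Q}$ (since $\zeta\in RB(Q)$). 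The quasi-triangle gives
$$
\rho(\eta,\zeta)\;\ge\;\frac{\rho(\eta,\zeta_Q)}{A_0}-\rho(\zeta,\zeta_Q)\;\ge\;RC_1\delta^{k_Q}\Bigl(\frac{1}{A_0}-1\Bigr),
$$
which is $\le 0$ as soon as $A_0\ge 1$, for every choice of $R>1$. Increasing $R$ does not help because the dilated ball that is required to lie in $\Omega$ and the dilated ball required to contain $\zeta$ are one and the same. The same loss occurs if you replace the quasi-metric by its equivalent genuine metric $\rho^{1/2}$. So your two-sided bound on $\delta^{k_Q}$ degenerates into only the upper bound $k_Q\le k_0+C$ (coming from (\ref{item:whit2})), while the upper bound on $\delta^{k_Q}$ (equivalently the lower bound on $k_Q$) has no proof. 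That lower bound is exactly the assertion that there is no arbitrarily large cube $Q\in\Lambda^j$ with $\zeta$ sitting near the edge of $RB(Q)$, and the quasi-triangle inequality alone cannot rule this out: the point $\zeta$ is not known to lie in $Q$, or even in $B(Q)$, only in $RB(Q)$. A correct proof of (\ref{item:whit3}) needs a different mechanism; for instance one can compare $Q$ to the \emph{unique} Whitney cube $Q_\zeta\ni\zeta$ via the dyadic nesting, or exploit the specific metric geometry of $(\bB,|1-\zeta\overline\eta|)$, but the step as written is a genuine gap, not a routine absorption of constants.
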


 In \cite{tch}, it is proved the following
\begin{lem}\label{eqn:3.1}
There exist $K_1,K_2>0$ such that for any $\zeta,\zeta',\xi\in\bB$, $\rho<1$, satisfying 
$|1-\zeta\overline{\xi}|\geq K_1 |1-\zeta\overline{\zeta}'|$, then
\begin{equation*}
 \left|\frac1{(1-\rho\zeta\overline{\xi})^{n+1}}-\frac1{(1-\rho\zeta'\overline{\xi})^{n+1}}\right|
\leq K_2\left( \frac{|1-\zeta\overline{\zeta'}|}{|1-\zeta\overline{\xi}|}\right)^\frac12\frac1{|1-\zeta\overline{\xi}|^{n+1}}.
 \end{equation*}
\end{lem}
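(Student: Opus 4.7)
The plan is to apply the fundamental theorem of calculus along the Euclidean segment $w(t)=(1-t)\zeta+t\zeta'$, $t\in[0,1]$. Setting $G(t)=(1-\rho w(t)\overline\xi)^{-(n+1)}$, a direct computation gives
$$\left|\frac{1}{(1-\rho\zeta\overline\xi)^{n+1}}-\frac{1}{(1-\rho\zeta'\overline\xi)^{n+1}}\right|\le(n+1)|(\zeta-\zeta')\overline\xi|\int_0^1\frac{dt}{|1-\rho w(t)\overline\xi|^{n+2}}.$$
The rest of the proof reduces to two sharp estimates.

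The first improves the naive Cauchy--Schwarz bound $|(\zeta-\zeta')\overline\xi|\le|\zeta-\zeta'|\le\sqrt2\,|1-\zeta\overline{\zeta'}|^{1/2}$ by an extra $|1-\zeta\overline\xi|^{1/2}$, and is where the hypothesis is used. Decompose via the identity
$$2(\zeta-\zeta')\overline\xi=(\zeta-\zeta')\overline{(\xi-\zeta)}+(\zeta-\zeta')\overline{(\xi-\zeta')}+2i\,\mathrm{Im}(\zeta\overline{\zeta'}),$$
which follows from $(\zeta-\zeta')\overline\zeta+(\zeta-\zeta')\overline{\zeta'}=2i\,\mathrm{Im}(\zeta\overline{\zeta'})$ on the sphere. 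Apply Cauchy--Schwarz together with $|\xi-\eta|\le\sqrt2\,|1-\eta\overline\xi|^{1/2}$ to the first two terms, $|\mathrm{Im}(\zeta\overline{\zeta'})|\le|1-\zeta\overline{\zeta'}|\le K_1^{-1/2}|1-\zeta\overline{\zeta'}|^{1/2}|1-\zeta\overline\xi|^{1/2}$ to the third, and the quasi-metric triangle inequality $|1-\zeta'\overline\xi|^{1/2}\le|1-\zeta\overline\xi|^{1/2}+|1-\zeta\overline{\zeta'}|^{1/2}$ (which yields $|1-\zeta'\overline\xi|\asymp|1-\zeta\overline\xi|$ for $K_1>1$) to deduce
$$|(\zeta-\zeta')\overline\xi|\lesssim|1-\zeta\overline{\zeta'}|^{1/2}|1-\zeta\overline\xi|^{1/2}\le K_1^{-1/2}|1-\zeta\overline\xi|.$$

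The second shows the denominator stays comparable to $|1-\zeta\overline\xi|$ along the whole path. Writing $a=1-\rho\zeta\overline\xi$, $b=1-\rho\zeta'\overline\xi$, we have $1-\rho w(t)\overline\xi=(1-t)a+tb$. The standard lower bound $|1-\rho\eta\overline\xi|\ge|1-\eta\overline\xi|/2$ (valid for $\rho\in[0,1]$ and $\eta\in\overline\bB$, proved by splitting according to whether $1-\rho$ exceeds $|1-\eta\overline\xi|/2$), combined with $|1-\zeta'\overline\xi|\asymp|1-\zeta\overline\xi|$, gives $|a|,|b|\gtrsim|1-\zeta\overline\xi|$. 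Since $|b-a|\le|(\zeta-\zeta')\overline\xi|\lesssim K_1^{-1/2}|1-\zeta\overline\xi|$, choosing $K_1$ large enough makes $|b-a|\le\tfrac12\min(|a|,|b|)$, whence $|(1-t)a+tb|\ge\tfrac12\min(|a|,|b|)\gtrsim|1-\zeta\overline\xi|$ for all $t\in[0,1]$.

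Plugging both estimates into the integral yields
$$\text{LHS}\lesssim\frac{|1-\zeta\overline{\zeta'}|^{1/2}|1-\zeta\overline\xi|^{1/2}}{|1-\zeta\overline\xi|^{n+2}}=\left(\frac{|1-\zeta\overline{\zeta'}|}{|1-\zeta\overline\xi|}\right)^{1/2}\frac{1}{|1-\zeta\overline\xi|^{n+1}},$$
as desired. The main obstacle is the first estimate: a direct Cauchy--Schwarz argument gives only $|1-\zeta\overline{\zeta'}|^{1/2}$ and loses the critical $|1-\zeta\overline\xi|^{1/2}$ factor. Recovering it requires the algebraic decomposition above together with the hypothesis, which jointly force $\mathrm{Im}(\zeta\overline{\zeta'})$ to be small compared to $|1-\zeta\overline\xi|$.
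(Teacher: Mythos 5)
The paper gives no proof of this lemma at all; it simply cites Tchoundja \cite{tch}, so there is nothing in the paper to compare against. Your proof, however, is correct and self-contained. Differentiating along the Euclidean segment from $\zeta$ to $\zeta'$ reduces the claim to two estimates, and you handle both correctly: the comparability of $|1-\rho w(t)\overline\xi|$ with $|1-\zeta\overline\xi|$ along the whole segment follows from the elementary lower bound $|1-\rho\eta\overline\xi|\ge|1-\eta\overline\xi|/2$ and the quasi-metric triangle inequality for $|1-z\overline w|^{1/2}$; and the refined numerator bound $|(\zeta-\zeta')\overline\xi|\lesssim|1-\zeta\overline{\zeta'}|^{1/2}|1-\zeta\overline\xi|^{1/2}$ is obtained from the sphere identity
$$2(\zeta-\zeta')\overline\xi=(\zeta-\zeta')\overline{(\xi-\zeta)}+(\zeta-\zeta')\overline{(\xi-\zeta')}+2i\,\mathrm{Im}(\zeta\overline{\zeta'}),$$
which I have verified (it uses $|\zeta|=|\zeta'|=1$), together with Cauchy--Schwarz, $|\xi-\eta|\le\sqrt2\,|1-\eta\overline\xi|^{1/2}$, and the hypothesis $|1-\zeta\overline\xi|\ge K_1|1-\zeta\overline{\zeta'}|$ to control the $\mathrm{Im}(\zeta\overline{\zeta'})$ term. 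You are right that this identity is the crux: a naive estimate $|(\zeta-\zeta')\overline\xi|\le|\zeta-\zeta'|$ loses the extra $|1-\zeta\overline\xi|^{1/2}$ and only recovers the trivial bound $|1-\zeta\overline\xi|^{-(n+1)}$ without the small factor. I find no gaps; the argument is a clean, direct substitute for the citation.
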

 
 The  following lemma  is based in the well known technique of splitting functions of A.P. Calderon and A. Zygmund.
\begin{lem}\label{lem:wilson}
There exists $C>0$ such that for any $\lambda>0$, $\psi \in L^1$,
$$
\left|\left\{ \eta\in \bB;\, \left( \int_0^1 (1-r^2) 
\left| \int_{\bB} \frac{\psi(\zeta)}{(1-r\eta\overline{\zeta})^{n+1}}d\sigma(\zeta)\right|^2 dr\right)^{1/2} 
>\lambda\right\}\right| \lesssim \frac{\|\psi\|_{L^1}}{\lambda}.
$$
\end{lem}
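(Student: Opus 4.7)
The plan is to read the estimate as a weak-type $(1,1)$ bound for the Littlewood--Paley square function
\begin{equation*}
g(\psi)(\eta):=\left(\int_0^1(1-r^2)\bigl|(I+R/n)\Ca(\psi)(r\eta)\bigr|^2\,dr\right)^{1/2},
\end{equation*}
since $1/(1-r\eta\overline\zeta)^{n+1}=(I+R/n)\Ca(\,\cdot\,,\zeta)$ evaluated at $r\eta$, and then to attack it by the classical Calder\'on--Zygmund method on the nonisotropic sphere. As starting point I need the $L^2\to L^2$ boundedness of $g$, which follows by splitting the $r$-integral at $r=1/2$: for $r\le 1/2$ the kernel $(1-r\eta\overline\zeta)^{-(n+1)}$ is uniformly bounded, so the contribution to $\|g(\psi)\|_{L^2}^2$ is $\lesssim\|\psi\|_{L^1}^2\lesssim\|\psi\|_{L^2}^2$; for $r\in[1/2,1)$ the weight $(1-r^2)$ is comparable to $(1-r^2)r^{2n-1}$, so the corresponding integral is $\lesssim\|\Ca(\psi)\|_{F^{2,2}_0}^2\lesssim\|\psi\|_{L^2}^2$ via the unweighted isomorphism $F^{2,2}_0\simeq H^2$ and the $L^2$-boundedness of $\Ca$ recalled in the introduction.

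Next I fix $\psi\in L^1$ and $\lambda>\|\psi\|_1/|\bB|$ (the opposite case is trivial) and apply the Calder\'on--Zygmund stopping-time decomposition in the adjacent dyadic system $\mathcal D^1$ of Proposition \ref{prop:dyadicdec}, producing pairwise disjoint cubes $\{Q_j\}\subset\mathcal D^1$ with $\lambda<|Q_j|^{-1}\int_{Q_j}|\psi|\,d\sigma\le C\lambda$, $\sum_j|Q_j|\lesssim\|\psi\|_1/\lambda$, together with a splitting $\psi=h+b$, $b=\sum_jb_j$, where $\|h\|_\infty\lesssim\lambda$, $\|h\|_1\le\|\psi\|_1$, $\operatorname{supp} b_j\subset Q_j$, $\int b_j\,d\sigma=0$ and $\|b_j\|_1\lesssim\lambda|Q_j|$. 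The good part is disposed of by Chebyshev together with the $L^2$ bound above: $|\{g(h)>\lambda/2\}|\lesssim\|h\|_2^2/\lambda^2\lesssim\|h\|_\infty\|h\|_1/\lambda^2\lesssim\|\psi\|_1/\lambda$.

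For the bad part I enlarge each $Q_j$ to $\widetilde Q_j$ by a fixed factor chosen so that $\eta\notin\widetilde Q_j$ and $\zeta\in Q_j$ force both the hypothesis $|1-\zeta\overline\eta|\ge K_1|1-\zeta\overline{\zeta_j}|$ of Lemma \ref{eqn:3.1} and the equivalence $|1-\eta\overline\zeta|\approx|1-\eta\overline{\zeta_j}|$. Then $|\cup_j\widetilde Q_j|\lesssim\|\psi\|_1/\lambda$, so by Chebyshev it suffices to prove $\int_{\bB\setminus\widetilde Q_j}g(b_j)\,d\sigma\lesssim\|b_j\|_1$ for each $j$. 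Using $\int b_j=0$ to introduce the kernel difference $(1-r\eta\overline\zeta)^{-(n+1)}-(1-r\eta\overline{\zeta_j})^{-(n+1)}$, then Minkowski in the $L^2((1-r^2)dr)$ norm, the pointwise smoothness estimate of Lemma \ref{eqn:3.1}, and the standard $r$-integral $\int_0^1(1-r)/|1-r\eta\overline\zeta|^{2n+3}\,dr\lesssim|1-\eta\overline\zeta|^{-(2n+1)}$, I reach the pointwise bound
\begin{equation*}
g(b_j)(\eta)\lesssim\|b_j\|_1\,\frac{l(Q_j)^{1/2}}{|1-\eta\overline{\zeta_j}|^{n+1/2}}.
\end{equation*}
The sphere volume estimate $\int_{|1-\eta\overline{\zeta_j}|>s_0}|1-\eta\overline{\zeta_j}|^{-n-1/2}\,d\sigma\lesssim s_0^{-1/2}$ applied with $s_0\approx l(Q_j)$ then gives $\int_{\bB\setminus\widetilde Q_j}g(b_j)\,d\sigma\lesssim\|b_j\|_1$, and summation over $j$ closes the argument.

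The delicate point I foresee is the bookkeeping in this last step: Lemma \ref{eqn:3.1} is stated uniformly in $\rho$ with $|1-\zeta\overline\xi|^{n+3/2}$ in the denominator, and this uniform form is too weak once squared and weighted by $(1-r^2)$, because it extracts no benefit from the factor $(1-r^2)$. What is really needed is the sharper pointwise version with $|1-\rho\zeta\overline\xi|^{n+3/2}$ in the denominator, which is a direct by-product of the mean-value argument proving Lemma \ref{eqn:3.1}; this refinement is what produces the gain of one extra power of $|1-\eta\overline{\zeta_j}|$ (yielding the exponent $n+1/2$ rather than $n+3/2$) and is what makes the final volume integral converge with the correct dependence on $l(Q_j)$.
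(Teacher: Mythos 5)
Your proof is correct and follows essentially the same Calder\'on--Zygmund argument as the paper: the paper performs the good/bad decomposition via a Whitney covering (Lemma~\ref{lemma:nonisotropicwhitney}) of the maximal-function level set $\Omega_\lambda=\{M(\psi)>\lambda\}$, which is the same object as your stopping-time decomposition, and the bad part is handled identically through cancellation, the kernel smoothness of Lemma~\ref{eqn:3.1}, and the weighted $r$-integral $\int_0^1 (1-r)\,|1-r\eta\overline{\xi_k}|^{-(2n+3)}\,dr\lesssim |1-\eta\overline{\xi_k}|^{-(2n+1)}$. Your closing remark about Lemma~\ref{eqn:3.1} is well taken and correctly diagnoses a small imprecision in the paper's statement: what the paper actually invokes is the sharper $\rho$-dependent inequality with $|1-\rho\zeta\overline{\xi}|^{n+3/2}$ (equivalently $|1-r\eta\overline{\xi_k}|^{n+3/2}$) in the denominator, and without that extra decay in $1-r$ the $r$-integral would only produce $|1-\eta\overline{\xi_k}|^{-(n+3/2)}$, whose integral over $\bB\setminus\widetilde Q_j$ is $\approx l(Q_j)^{-3/2}$ and does not cancel the $l(Q_j)^{1/2}$ factor.
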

\begin{proof}

 We denote by $G(\psi)$ the function on $\bB$ defined by
 $$G(\psi)(\eta)= \left(\int_0^1\left|\left(I+\frac{R}{n}\right){\Ca}(\psi)(r\eta)\right|^2(1-r^2)dr\right)^\frac12.$$
If $\lambda>0$ and $\psi\in L^1$, we denote $\Omega_\lambda= \{ \eta\in \bB;\,  M(\psi)(\eta)>\lambda\}$.

Since the nonisotropic Hardy-Littlewood maximal operator is of weak type $(1,1)$,
we have that
$$|\Omega_\lambda|\lesssim \frac1{\lambda}\int_{\bB} |\psi(\zeta)|d\sigma(\zeta).$$

We must then estimate
$$|\{ \eta\notin \Omega_\lambda;\, G(\psi) (\eta)>\lambda\}|.$$
By  Lemma \ref{lemma:nonisotropicwhitney}, there exists $(Q_k)_k$  a 
Whitney decomposition of the set $\Omega_\lambda$. 
We split $\psi$ into two pieces, $\psi=g+b$, where
$$g(\zeta)=\left\{\begin{array}{ll}
\psi(\zeta),\qquad \zeta\notin \Omega_\lambda\\
\frac1{|Q_k|}\int_{Q_k} \psi d\sigma\qquad \zeta\in Q_k.
\end{array}\right.$$
Property \eqref{item:whit2} of the Whitney decomposition give that $\|g\|_\infty\lesssim \lambda$.
Put $b_k= b\Chi_{Q_k}=(\psi-\psi_{Q_k})\Chi_{Q_k}$, where 
$\displaystyle{\psi_{Q_k}=\frac1{|Q_k|} \int_{Q_k}\psi d\sigma}$.
Then $b_k$ is supported in $Q_k$, $\int_{Q_k}b_k=0$ and $\|b_k\|_{L^1}\lesssim \int_{Q_k}|\psi|d\sigma$.
We also have that $b=\sum_k b_k$.

We decompose
\begin{align*}
\left|\{ \eta\notin \Omega_\lambda;\, G(\psi)(\eta) >\lambda\}\right|&\leq
\left|\{ \eta\notin \Omega_\lambda;\, G(g)(\eta) >\lambda/2\}\right|
+\left|\{ \eta \notin \Omega_\lambda;\, G(b)(\eta)>\lambda/2\}\right|\\
&=I+II.
\end{align*}
We will estimate each term separately.

For the first one we use  Chebyshev's inequality 
and the facts that both $\Ca$ and the Littlewood-Paley $g$-function are bounded on $L^2(\bB)$.
\begin{align*}
\left|\{ \eta\notin \Omega_\lambda;\, G(g)(\eta)>\lambda/2\}\right| 
&\lesssim \frac1{\lambda^2} \int_{\bB} G(g)(\eta)^2d\sigma(\eta)
\lesssim\frac1{\lambda^2}\int_{\bB} |g(\eta)|^2d\sigma(\eta)\\
&\lesssim\frac1{\lambda} \left( \int_{\bB\setminus \Omega_\lambda} |\psi(\zeta)|d\sigma(\zeta)
+ \int_{ \Omega_\lambda} |g(\zeta)|d\sigma(\zeta)\right)\\
&\lesssim \frac1{\lambda}\left( \int_{\bB\setminus \Omega_\lambda} |\psi(\zeta)|d\sigma(\zeta)
+ \sum_k \frac1{|Q_k|}  \int_{Q_k}|\psi(\zeta)|d\sigma(\zeta)\right)\\
&\lesssim \frac1{\lambda} \int_{\bB}|\psi(\zeta)|d\sigma(\zeta),
\end{align*}

We now estimate $II$. Let $\eta\in \bB\notin \Omega_\lambda$. Denote by $\xi_k$ the "center" of $Q_k$, $k\geq 1$. 
Since for each $k\geq 1$, $\int_{\bB} b_k=0$, we have
\begin{align*}
\int_{\bB} \frac1{(1-r\eta\overline{\zeta})^{n+1}} b_k(\zeta)d\sigma(\zeta)=
\int_{\bB} \left(\frac1{(1-r\eta\overline{\zeta})^{n+1}} 
-\frac1{(1-r\eta\overline{\xi_k})^{n+1}}\right) b_k(\zeta)d\sigma(\zeta).
\end{align*}
Next, observe that if  we choose $R$ in Lemma \ref{lemma:nonisotropicwhitney} such that for any $\zeta\in Q_k$ and
 $\eta \in\bB\notin \Omega_\lambda$, we have that 
$|1-r\eta\overline{\zeta}| \geq K_1|1-\zeta\overline{\xi}_k|$, where $K_1$ is as in Lemma \ref{eqn:3.1}. 
Thus, this lemma  gives that the above integral is bounded by
\begin{align*}
\int_{\bB} \frac{|1-\zeta\overline{\xi_k}|^{1/2}}{|1-r\eta\overline{\xi_k}|^{n+1+1/2}}|b_k(\zeta)|d\sigma(\zeta)
\lesssim \int_{\bB} \frac{l(Q_k)^{1/2}}{|1-r\eta\overline{\xi_k}|^{n+1+1/2}}|b_k(\zeta)|d\sigma(\zeta).
\end{align*}
But
$$
\left(\int_0^1(1-r^2) \frac{dr}{|1-r\eta\overline{\xi_k}|^{2n+3}} \right)^{1/2}
\lesssim \frac1{|1-\eta\overline{\xi_k}|^{n+1/2}},
$$
and, consequently,
\begin{align*}
\int_{\bB\setminus \Omega_\lambda} G(b_k)(\eta) d\sigma(\eta)
&\lesssim l(Q_k)^{1/2}\int_{Q_k} |b_k(\zeta)|d\sigma(\zeta) \int_{\bB\setminus \Omega_\lambda} 
\frac{d\sigma(\eta)}{|1-\eta\overline{\xi_k}|^{n+1/2}}\\
&\lesssim \int_{Q_k} |b_k(\zeta)|d\sigma(\zeta).
\end{align*}
Altogether, 
\begin{align*}
\int_{\bB\setminus \Omega_\lambda} G(b)(\eta) d\sigma(\eta)&\lesssim \sum_k  \int_{Q_k} |b_k(\zeta)|d\sigma(\zeta)\\
&\lesssim  \sum_k\int_{Q_k} |\psi(\zeta)|d\sigma(\zeta)\lesssim \int_{\bB}|\psi(\zeta)|d\sigma(\zeta).
\end{align*}
From this estimate we deduce immediately that
$$
\left|\{ \eta \notin \Omega_\lambda;\, G(b)(\eta)>\lambda/2\}\right|\lesssim \frac1{\lambda}\int_{\bB}|\psi(\zeta)d\sigma(\zeta)|.
$$
And that finishes the proof.
\end{proof}

 We  now can  prove the main lemma which is a version for the sphere with 
the nonisotropic distance $\rho$ of Lemma 3.1 in \cite{Le0}. 
Here, in our situation, we skip the fact that we do not have convolution, 
using  the estimate in Lemma \ref{eqn:3.1}:
 \begin{lem}\label{lem:mainestimate}
Let ${\mathcal D}^j$, $j=1,\dots, M$ an adjacent dyadic system in $\bB$ as in 
Proposition \ref{prop:dyadicdec} and let $0<\lambda<1$ be fixed. 
Then,  for $\psi\in L^1$ and for any cube $Q\in {\mathcal D}^j$, we have the estimate
\begin{equation*}
\omega_\lambda(G(\psi)^2;Q)
\lesssim \sum_{k\geq 0} \frac1{2^{k/2}}\left( \frac1{|2^k B(Q)|}\int_{2^kB(Q)} |\psi(\eta)|d\sigma(\eta)\right)^2.
\end{equation*}
\end{lem}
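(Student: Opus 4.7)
My plan is to adapt Lerner's approach for the Euclidean Lusin square function (\cite{Le0}, \cite{Le}) to the non-isotropic ball, replacing the convolution-based smoothness by Lemma \ref{eqn:3.1}. First I would fix a reference point $\eta_0\in Q$ (say the center) and split $\psi=\psi_1+\psi_2$ with $\psi_1=\psi\chi_{2B(Q)}$. Writing $F:=(I+R/n)\Ca$ and choosing $c=G(\psi_2)^2(\eta_0)$ in the infimum defining $\omega_\lambda$, the identity $|F(\psi)|^2=|F(\psi_1)|^2+2\,\mathrm{Re}(F(\psi_1)\overline{F(\psi_2)})+|F(\psi_2)|^2$ yields the decomposition
\begin{equation*}
G(\psi)^2(\eta)-c=G(\psi_1)^2(\eta)+T_{\mathrm{cr}}(\eta)+\bigl[G(\psi_2)^2(\eta)-G(\psi_2)^2(\eta_0)\bigr],
\end{equation*}
where $T_{\mathrm{cr}}(\eta)=2\,\mathrm{Re}\int_0^1(1-r^2)F(\psi_1)(r\eta)\overline{F(\psi_2)(r\eta)}\,dr$. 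By the subadditivity of the nonincreasing rearrangement I would reduce to bounding each of the three pieces at the level $\lambda|Q|/3$. Throughout, write $M_k:=\frac{1}{|2^kB(Q)|}\int_{2^kB(Q)}|\psi|\,d\sigma$ for the averages appearing in the target bound.

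The local term is controlled by Lemma \ref{lem:wilson}: its weak-type $(1,1)$ conclusion for $G$ gives $(G(\psi_1)^2\chi_Q)^*(\lambda|Q|/3)\lesssim(\|\psi_1\|_{L^1}/(\lambda|Q|))^2\lesssim M_1^2/\lambda^2$, which is absorbed by the $k=0,1$ terms of the sum. For the far oscillation I would split $\psi_2=\sum_{k\ge1}\psi\chi_{A_k}$ along the annuli $A_k=2^{k+1}B(Q)\setminus 2^kB(Q)$ and set $F_k:=F(\psi\chi_{A_k})$. Applying Lemma \ref{eqn:3.1} with $\eta,\eta_0\in Q$ (so $|1-\eta\overline{\eta_0}|\lesssim l(Q)$) and $\zeta\in A_k$ (so $|1-\eta\overline{\zeta}|\asymp 2^kl(Q)$) gives the crucial $1/2$-H\"older gain
\begin{equation*}
D_k(\eta):=\Bigl(\int_0^1(1-r^2)|F_k(r\eta)-F_k(r\eta_0)|^2\,dr\Bigr)^{1/2}\lesssim 2^{-k/2}M_k.
\end{equation*}
Expanding $|F(\psi_2)(r\eta)|^2-|F(\psi_2)(r\eta_0)|^2$ as a double sum of terms of the form $(F_j(r\eta)-F_j(r\eta_0))\overline{F_k(r\eta)}$ and $F_j(r\eta_0)\overline{F_k(r\eta)-F_k(r\eta_0)}$, I would bound each summand by direct integration, splitting the $r$-range according to the scales $2^jl(Q)$ and $2^kl(Q)$. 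This produces a pairwise estimate of order $2^{-\max(j,k)/2-|j-k|/2}M_jM_k$, whose double sum collapses by a Schur/Young argument to $\sum_{k\ge 0}2^{-k/2}M_k^2$.

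For the cross term $T_{\mathrm{cr}}$ I would further split $F(\psi_2)(r\eta)=F(\psi_2)(r\eta_0)+[F(\psi_2)(r\eta)-F(\psi_2)(r\eta_0)]$. The smoothness subterm is bounded by Cauchy-Schwarz by $2G(\psi_1)(\eta)\sum_k D_k(\eta)\lesssim G(\psi_1)(\eta)\sum_k 2^{-k/2}M_k$; its rearrangement $\lesssim M_1\sum_k 2^{-k/2}M_k$ fits inside $\sum_k 2^{-k/2}M_k^2$ via AM-GM and the estimate $M_1^2\lesssim\sum_k 2^{-k/2}M_k^2$. The value subterm, where $F(\psi_2)(r\eta_0)$ is itself expanded over the annuli, is treated by the same pairwise kernel analysis as above. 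The hard part will be the precise double-sum accounting in the far oscillation: the naive Cauchy-Schwarz bound $D_j(\eta)S_k(\eta)\lesssim 2^{-j/2}M_jM_k$ (where $S_k(\eta):=(\int_0^1(1-r^2)|F_k(r\eta)|^2\,dr)^{1/2}\lesssim M_k$) is too coarse and would yield a divergent double sum, so the additional $2^{-|j-k|/2}$ decay must be extracted directly by integrating the kernel estimates over the scales.
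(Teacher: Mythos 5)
Your proposal takes a genuinely different route from the paper. The paper splits the $r$\emph{-integral} first, writing $G(\psi)^2 = I_1(\psi) + I_2(\psi)$ with $I_1$ over $r\in(1-4K_1 l(Q),1)$ and $I_2$ over $r\in(0,1-4K_1 l(Q))$. For $I_1$ it then splits $\psi$; for $I_2$ it bounds the oscillation $|I_2(\psi)(\zeta_1)-I_2(\psi)(\zeta_2)|$ directly, since when $1-r\gtrsim l(Q)$ the hypothesis $|1-r\zeta_1\overline\eta|\ge K_1|1-\zeta_1\overline{\zeta_2}|$ of Lemma~\ref{eqn:3.1} holds for \emph{all} $\eta\in\bB$. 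Thus there is never a cross term pairing a locally supported function with a far one over the full $r$-range. You split $\psi$ first, which creates $T_{\mathrm{cr}}$, and that is where your argument has a real gap.

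The far--oscillation and the smoothness subterm of the cross term do work out: the integral $\int_0^1 (1-r^2)|F_j(r\eta)-F_j(r\eta_0)|\,|F_k(r\eta)|\,dr$ computed against the pointwise kernel bounds does yield $\lesssim 2^{j/2-k}M_jM_k$ for $j\le k$ and $\lesssim 2^{k-3j/2}M_jM_k$ for $j>k$ (up to a log when $n=1$), which the Schur/AM--GM argument collapses to $\sum_k 2^{-k/2}M_k^2$. So the part you flag as ``the hard part'' can be pushed through. The genuine problem is the \emph{value subterm} $W(\eta)=2\,\mathrm{Re}\int_0^1(1-r^2)F(\psi_1)(r\eta)\overline{F(\psi_2)(r\eta_0)}\,dr$. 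Here ``the same pairwise kernel analysis as above'' does not apply: $\psi_1$ is supported in $2B(Q)$ near $\eta$, so $|F(\psi_1)(r\eta)|$ has no pointwise control as $r\to 1$, and the direct $r$-integration of the product kernel diverges. The alternative, Cauchy--Schwarz $|W(\eta)|\le 2G(\psi_1)(\eta)\,G(\psi_2)(\eta_0)$, is too coarse: expanding $G(\psi_2)(\eta_0)^2$ over annuli only gives $G(\psi_2)(\eta_0)\lesssim(\sum_k M_k^2)^{1/2}$, without the required $2^{-k/2}$ gain, so $M_1\cdot G(\psi_2)(\eta_0)$ is not dominated by $\sum_k 2^{-k/2}M_k^2$ (take $\psi\equiv 1$ and $l(Q)\to 0$: the left side is $\gtrsim\log(1/l(Q))$, the right side is $O(1)$, exposing the failure of the Cauchy--Schwarz route, though not of the lemma itself). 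A correct treatment of $W$ requires an $L^1$/Chebyshev estimate where one integrates the kernel over $\eta\in Q$ and splits the $r$-integral around $1-r\approx l(Q)$ — exactly the split the paper builds in from the outset, and which renders the cross term moot. As written, your plan does not close at this step.
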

\begin{proof}
Let $K_1, K_2$ be as in Lemma \ref{eqn:3.1}. If $\zeta\in Q$, we decompose $G(\psi)^2(\zeta)$ in two terms given by
\begin{align*}G(\psi)^2(\zeta)&=\int_{1-4K_1l(Q)}^1 \left|\left(I+\frac{R}{n}\right){\Ca}(\psi)(r\zeta)\right|^2(1-r^2)dr\\
&+ \int_0^{1-4K_1l(Q)}\left|\left(I+\frac{R}{n}\right){\Ca}(\psi)(r\zeta)\right|^2(1-r^2)dr\\
&= I_1(\psi)(\zeta)+I_2(\psi)(\zeta).\end{align*}

We will first show that
\begin{equation}\label{eqn:mainestimate11}
\left(I_1(\psi)\Chi_Q\right)^*(\lambda|Q|) 
\lesssim \sum_{k\geq0}\frac1{2^{k}}\left( \frac1{|2^kB(Q)|}\int_{2^kB(Q)}|\psi(\eta)|d\sigma(\eta)\right)^2.
\end{equation}
Since $(x+y)^2\leq 2(x^2+y^2)$, we have that for any $\zeta\in Q$,
$$I_1(\psi)(\zeta)\leq 2\left( I_1(\psi\Chi_{4B(Q)})(\zeta)+I_1(\psi\Chi_{\bB\setminus 4B(Q)})(\zeta)\right),$$
and consequently,
\begin{equation}\label{eqn:mainestimate2}
\left(I_1(\psi)\Chi_Q\right)^*(\lambda|Q|) 
\lesssim \left(I_1(\psi\Chi_{4B(Q)})\right)^*(\lambda|Q|/2) 
+\left(I_1(\psi\Chi_{\bB\setminus 4B(Q)})\right)^*(\lambda|Q|/2).
\end{equation}
By Lemma \ref{lem:wilson} we have that
\begin{align*}
\left(I_1(\psi\Chi_{4B(Q)})\right)^*(\lambda|Q|/2)
&\leq \left((G(\psi\Chi_{4B(Q)}))^2\right)^*(\lambda|Q|/2)\\
&\lesssim \left(\frac1{|4B(Q)|} \int_{4B(Q)} |\psi(\eta)| d\sigma(\eta)\right)^2.
\end{align*}

For any $\eta \in \bB\setminus 4B(Q)$, $|1-\zeta\overline{\eta}|>l(Q)$. Hence,
\begin{align*}
 \left| \left(I+\frac{R}{n}\right)\Ca (\psi\Chi_{\bB\setminus 4B(Q)})(r\zeta)\right|
&\lesssim \int_{|1-\zeta\overline{\eta}|>l(Q)}\frac{|\psi(\eta)|}{((1-r)+|1-\zeta\overline{\eta}|)^{n+1}}d\sigma(\eta)\\
&\lesssim \sum_{k\geq 0}\frac1{(2^kl(Q))^{n+1}}\int_{2^{k}B(Q)}|\psi(\eta)|d\sigma(\eta)\\
&\approx\frac1{l(Q)}\sum_{k\geq 0}\frac1{2^k}\frac1{|2^{k}B(Q)|}\int_{2^{k}B(Q)}|\psi(\eta)|d\sigma(\eta).
\end{align*}

Thus, for any $\zeta\in Q$
\begin{align*}
& I_1(\psi\Chi_{\bB\setminus 4B(Q)})(\zeta)\\
&\lesssim\int_{1-4K_1l(Q)}^1 \frac1{l(Q)^2}
\left( \sum_{k\geq0} \frac1{2^k}\frac1{|2^kB(Q)|} \int_{2^kB(Q)}|\psi(\eta)|d\sigma (\eta)\right)^2(1-r^2)dr\\
&\approx\left( \sum_{k\geq0} \frac1{2^k}\frac1{|2^kB(Q)|} \int_{2^kB(Q)}|\psi(\eta)|d\sigma(\eta) \right)^2.
\end{align*}
By Chebyshev's inequality,
\begin{align*}
\left(I_1(\psi\Chi_{\bB\setminus 4B(Q)})\Chi_Q\right)^*(\lambda|Q|/2)
&\lesssim \frac{\| I_1(\psi\Chi_{\bB\setminus 4B(Q)})\Chi_Q\|_{L^1}}{(\lambda|Q|)/2}\\
&\lesssim\frac{|Q|}{\lambda|Q|}\left(\sum_{k\geq0} \frac1{2^k}  \frac1{|2^kB(Q)|} 
\int_{2^kB(Q)} |\psi(\eta)|d\sigma(\eta)\right)^2,
\end{align*}
and consequently, applying Schwartz's inequality,
\begin{equation}\label{eqn:mainestimate1}\begin{split}
&\left(I_1(\psi\Chi_{\bB\setminus 4B(Q)})\Chi_Q\right)^*(\lambda|Q|/2)\\
&\lesssim \sum_{k\geq0} \frac1{2^k} \left( \frac1{|2^kB(Q)|} \int_{2^kB(Q)} |\psi(\eta)|d\sigma(\eta)\right)^2,
\end{split}\end{equation}
which finishes the proof of the estimate \eqref{eqn:mainestimate11}.

In order to estimate $\omega_\lambda(G(\psi)^2;Q)$, consider any $\zeta_2\in \bB$.
Observe that
\begin{align*}
\omega_\lambda(G(\psi)^2;Q)
&\leq \left( (G(\psi)^2-I_2(\psi)(\zeta_2))\Chi_Q\right)^*(\lambda|Q|)\\
&\lesssim \left(I_1(\psi\Chi_Q)\right)^*((\lambda|Q|/2)
+ \left( (I_2(\psi)-I_2(\psi)(\zeta_2))\Chi_Q\right)^*((\lambda|Q|/2))\\
&\lesssim \left( I_1(\psi)\Chi_Q\right)^*((\lambda|Q|)/2)+\|I_2(\psi)-I_2(\psi)(\zeta_2)\|_{L^\infty(Q)}.
\end{align*}

 So we are left to estimate $\|I_2(\psi)-I_2(\psi)(\zeta_2)\|_{L^\infty(Q)}$.
Let $\zeta_1,\zeta_2\in Q$. Then,
\begin{align*}
&|I_2(\psi)(\zeta_1)-I_2(\psi)(\zeta_2)|\\
&=\left|\int_0^{1-4K_1l(Q)} \left(\left| \left(I+\frac{R}{n}\right)\Ca(\psi)(r\zeta_1)\right|^2
-\left| \left(I+\frac{R}{n}\right)\Ca(\psi) (r\zeta_2)\right|^2 \right) (1-r^2)dr \right|.\end{align*}

But 
\begin{align*}
&\left|\left|\int_{\bB}\frac{\psi(\eta)}{(1-r\zeta_1\overline{\eta})^{n+1}}d\sigma(\eta)\right|^2
-\left|\int_{\bB}\frac{\psi(\eta)}{(1-r\zeta_2\overline{\eta})^{n+1}}d\sigma(\eta)\right|^2\right|\\
&\leq
\left|  \int_{\bB} \left| \frac{1}{(1-r\zeta_1\overline{\eta})^{n+1}}-\frac{1}{(1-r\zeta_2\overline{\eta})^{n+1}}  \right| 
|\psi(\eta)|d\sigma(\eta)\right|\\
&\times\left( \left|\int_{\bB} \frac{|\psi(\eta)|}{|1-r\zeta_1\overline{\eta}|^{n+1}}d\sigma(\eta)\right|
+\left|\int_{\bB} \frac{|\psi(\eta)|}{|1-r\zeta_2\overline{\eta}|^{n+1}}d\sigma(\eta)\right|\right).
\end{align*}
Now, if $\zeta_1,\zeta_2\in Q$, $|1-\zeta_1\overline{\zeta_2}|\leq 4l(Q)$, and consequently, then
for any $0<r<1-4K_1l(Q)$ and any $\eta\in \bB$, $|1-r\overline{\eta}\zeta_1|\geq K_1|1-\zeta_1\overline{\zeta_2}|$, 
and consequently, applying Lemma \ref{eqn:3.1},
\begin{align*}
\left| \frac{1}{(1-r\zeta_1\overline{\eta})^{n+1}}-\frac{1}{(1-r\zeta_2\overline{\eta})^{n+1}} \right| 
&\lesssim \left(\frac{|1-\zeta_1\overline{\zeta_2}|}{|1-r\zeta_1\overline{\eta}|}\right)^\frac12 
\frac1{|1-r\zeta_1\overline{\eta}|^{n+1}}\\
&\lesssim \frac{l(Q)^\frac12}{(1-r)^\frac12|1-r\zeta_1\overline{\eta}|^{n+1}}.
\end{align*}
As a consequence, since  
$|1-r\zeta_1\overline{\eta}|\approx |1-r\zeta_2\overline{\eta}|$,

\begin{align*}
&\left|  I_2(\psi)(\zeta_1)-I_2(\psi)(\zeta_2)\right|\\
&\lesssim l(Q)^\frac12\int_0^{1-4K_1l(Q)} 
\left( \int_{\bB} \frac{|\psi(\eta)|}{|1-r\zeta_1\overline{\eta}|^{n+1}}d\sigma(\eta)\right)^2 (1-r^2)^\frac12 dr\\
&\lesssim {\sum_{k\geq2}}'\int_{1-2^{k+1}K_1l(Q)}^{1-2^kK_1l(Q)}l(Q)^\frac12 \left( \int_{2^kB(Q)} 
\frac{|\psi(\eta)|}{|1-r\zeta_1\overline{\eta}|^{n+1}}d\sigma(\eta)\right)^2 (1-r^2)^\frac12 dr\\
&+{\sum_{k\geq2}}'\int_{1-2^{k+1}K_1l(Q)}^{1-2^kK_1l(Q)}l(Q)^\frac12 \left( \int_{\bB\setminus 2^kB(Q)} 
\frac{|\psi(\eta)|}{|1-r\zeta_1\overline{\eta}|^{n+1}}d\sigma(\eta)\right)^2 (1-r^2)^\frac12 dr\\
&=J_{21}+J_{22}.
\end{align*}
Here, by $\sum_{k\geq2}'$ we mean that the summands are considered only for 
those $k\ge 2$ such that $2^{k+1}K_1l(Q)<1$
We begin with the estimates of $J_{21}$.
\begin{align*}
J_{21} &\lesssim {\sum_{k\geq2}}'l(Q)^\frac12 \left(2^kl(Q)\right)^\frac32 \left(  \frac1{(2^kl(Q))^{n+1}} 
\int_{2^kB(Q)}|\psi(\eta)|d\sigma(\eta)\right)^2\\
&\lesssim{\sum_{k\geq2}}'\frac1{2^{k\frac12}} \left(  \frac1{|2^kB(Q)|} \int_{2^kB(Q)}|\psi(\eta)|d\sigma(\eta)\right)^2.
\end{align*}
Next,
\begin{align*}
J_{22} &\lesssim {\sum_{k\geq2}}' l(Q)^\frac12 \left(2^kl(Q)\right)^\frac32 
\left( \sum_{i>k} \frac1{(2^i l(Q))^{n+1}} \int_{2^iB(Q)} |\psi(\eta)|d\sigma(\eta) \right)^2\\
&\lesssim {\sum_{k\geq2}}' 2^{\frac{k}2} \sum_{i\geq k} \frac1{2^i} 
\left( \frac1{|2^iB(Q)|} \int_{2^iB(Q)} |\psi(\eta)|d\sigma(\eta)\right)^2\\
&={\sum_{i\geq 2}}' \sum_{k\leq i} 2^{\frac{k}2} \frac1{2^i} \left( \frac1{|2^iB(Q)|} 
\int_{2^iB(Q)} |\psi(\eta)|d\sigma(\eta)\right)^2\\
&\lesssim {\sum_{i\geq 2}}' \frac1{2^{\frac{i}{2}}} \left( \frac1{|2^iB(Q)|} \int_{2^iB(Q)}|\psi(\eta)|d\sigma(\eta)\right)^2.
\end{align*} 

Finally, \eqref{eqn:mainestimate1} and the above estimates finish the proof of the lemma.
\end{proof}

\subsection{Proof of the estimate in Theorem \ref{thm:Bergman}}\quad\par
By Proposition \ref{lem:adjBergman}, it is enough to show that for  any $\omega\in A_p$,  
$\| {\mathcal C}(\psi)\|_{F_0^{p,2}(\omega)} \leq C(p,n)[\omega]_{A_p}^{\max \{1/2,1/(p-1)\}}\|\psi\|_{L^{p}(\omega)}$. 
As we have recalled at the beginning of this section, the proof of Theorem \ref{thm:Bergman} follows closely 
the ideas in \cite{Le} and in \cite{Le3}. We now sketch how to finish the proof. 
First, by Lemma \ref{lem:mainestimate}, we have that 
a.e. $\zeta\in Q$, $m_{\lambda, Q}^{\#}G(\psi)^2(\zeta)\lesssim M(\psi)(\zeta)^2$.  
Next, we have that for any $Q\in {\mathcal D}^i$, 
there exists a sparse family ${\mathcal S}(Q)=\left(Q_j^k \right)$, $Q_j^k\in{\mathcal D}^i$ so that if we denote by 
$$
{\mathcal T}_l^{\mathcal S}(\psi)(\zeta)
= \left( \sum_{Q_j^k\in{\mathcal S}(Q)} (\psi_{2^lB(Q_j^k)})^2\Chi_{Q_j^k}(\zeta) \right)^{1/2},
$$
then by Theorem \ref{thm:homoglerner} and our previous observation, we have that for a.e $\zeta\in Q$,
 
\begin{equation}\label{eqn:lernerthm1}
|G(\psi)(\zeta)^2-m_Q(G(\psi)^2)|\lesssim \left( M(\psi)(\zeta)^2
+ \sum_{l\geq 0} \frac1{2^{l/2}} \left( {\mathcal T}_{l}^{\mathcal S}(\psi)\right)^2\right).
\end{equation}

Hence
\begin{equation}\label{eqn:lernerthm2}
|G(\psi)(\zeta)^2-m_Q(G(\psi)^2)|^{1/2}\lesssim  M(\psi)(\zeta)+ {\mathcal T}^{\mathcal S}(\psi)(  \zeta),
\end{equation}
where 
$$
{\mathcal T}^{\mathcal S}(\psi)(  \zeta)= \sum_{l^\geq 0} \frac1{2^{l/4}} {\mathcal T}_{l}^{\mathcal S}(\psi)(\zeta).
$$

The following lemma gives an estimate for the first term ${\mathcal T}_0^{\mathcal S}$.
It was originally proved 
 in \cite{CrU-Ma-Pe} for $\R^n$. For a sake of completeness, we give an alternative proof, much simpler, 
obtained in \cite{Le3}, adapted for our setting of homogeneous spaces. 
 \begin{lem}\label{thm.lernerdec} For each $\psi\in L^1(Q)$ and $\omega\in A_3$,
$$\|{\mathcal T}_{0}^{\mathcal S}(\psi)\|_{L^3(\omega)} \lesssim  [\omega]_{A_3}^{ 1/2}\|\psi\|_{L^3(\omega)},$$
with constant independent of the family ${\mathcal S}$.
\end{lem}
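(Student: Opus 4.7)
The plan is to adapt the simplified argument of \cite{Le3} to our homogeneous setting, exploiting duality together with the sparse decomposition. Since $(L^{3/2}(\omega))^{\ast}=L^{3}(\omega)$ under the pairing $(f,h)\mapsto\int fh\omega\,d\sigma$ and $\|\mathcal{T}_{0}^{\mathcal{S}}(\psi)\|_{L^{3}(\omega)}^{2}=\|\mathcal{T}_{0}^{\mathcal{S}}(\psi)^{2}\|_{L^{3/2}(\omega)}$, the claim reduces to showing
\begin{equation*}
\sum_{Q\in\mathcal{S}(Q_{0})}(\psi_{B(Q)})^{2}(g\omega)(Q)\ \lesssim\ [\omega]_{A_{3}}\|\psi\|_{L^{3}(\omega)}^{2}\|g\|_{L^{3}(\omega)}
\end{equation*}
for every nonnegative $g\in L^{3}(\omega)$; taking square roots then yields the lemma.

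To control the bilinear sum, I would bound each factor pointwise by a suitable maximal function. The crude bound $\psi_{B(Q)}\leq M(\psi)(x)$ for $x\in Q$ gives $(\psi_{B(Q)})^{2}\leq (M\psi)^{2}(x)$, while for the remaining factor I would use $(g\omega)(Q)\leq \omega(Q)M_{\omega}(g)(x)$ with $x\in Q$, where $M_{\omega}$ denotes the nonisotropic maximal operator with respect to $\omega\,d\sigma$. Picking $x_{Q}\in E_{Q}$ to be a minimizer of the integrand, invoking the sparseness $|E_{Q}|\geq|Q|/2$ from \eqref{eqn:sparse} together with the comparison $\omega(Q)\lesssim\omega(E_{Q})$ (valid because $\omega\in A_{3}\subset A_{\infty}$), and using the pairwise disjointness of the $\{E_{Q}\}$, would collapse the sum to a single integral:
\begin{equation*}
\sum_{Q}(\psi_{B(Q)})^{2}(g\omega)(Q)\ \lesssim\ \int_{\bB}(M\psi)^{2}M_{\omega}(g)\,\omega\,d\sigma.
\end{equation*}
Applying H\"older with exponents $(3/2,3)$ leaves $\|M\psi\|_{L^{3}(\omega)}^{2}\|M_{\omega}(g)\|_{L^{3}(\omega)}$; the second factor is $\lesssim\|g\|_{L^{3}(\omega)}$ with a constant independent of $\omega$, since $M_{\omega}$ is bounded on $L^{p}(\omega)$ for $p>1$ with a universal constant, while Buckley's sharp estimate (Theorem \ref{thm:bucley}) gives $\|M\psi\|_{L^{3}(\omega)}\lesssim[\omega]_{A_{3}}^{1/2}\|\psi\|_{L^{3}(\omega)}$, producing the required $[\omega]_{A_{3}}$ in the bilinear bound.

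The main obstacle is keeping all implicit constants uniform in $\omega$ so that the final exponent of $[\omega]_{A_{3}}$ is exactly $1/2$ and not a larger number. The delicate point is the step $\omega(Q)\lesssim\omega(E_{Q})$: its proportionality constant must not bring in extra powers of $[\omega]_{A_{3}}$, and this is what forces the sparseness parameter $1/2$ (rather than an arbitrary small $\eta>0$) in the construction of $\mathcal{S}(Q_{0})$. A naive alternative attempt, namely using H\"older with exponents $(3,3/2)$ and the $A_{3}$-condition $\omega(B)(\omega'(B))^{2}\leq [\omega]_{A_{3}}|B|^{3}$ to obtain the pointwise bound $(\psi_{B(Q)})^{2}\leq [\omega]_{A_{3}}^{2/3}(M_{\omega}(\psi^{3}))^{2/3}$, is doomed because it requires the $L^{1}(\omega)$-endpoint of $M_{\omega}$, which fails. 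It is precisely the replacement of $M_{\omega}(\psi^{3})^{2/3}$ by $(M\psi)^{2}$ paid for by Buckley's estimate that makes the argument of \cite{Le3} substantially cleaner than the original Bellman-function approach of \cite{CrU-Ma-Pe}.
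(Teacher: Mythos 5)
Your duality reduction matches the paper's, but the collapse of the sparse sum is where the argument breaks. You need to replace $\omega(Q)$ by $\omega(E_Q)$ in order to exploit the disjointness of the $E_Q$, and you assert that $\omega(Q)\lesssim\omega(E_Q)$ holds with a constant independent of $[\omega]_{A_3}$ because $|E_Q|\geq|Q|/2$ and $\omega\in A_\infty$. This is false. For $\omega\in A_3$ and $E\subset Q$ with $|E|\geq|Q|/2$, the best available bound, obtained from H\"older and the $A_3$-condition, is $\omega(Q)\leq 2^3[\omega]_{A_3}\,\omega(E)$, and power weights $\omega(\theta)=|\theta|^{-a}$ with $a\nearrow 1$ on the circle show that this linear dependence on $[\omega]_{A_3}$ is essentially attained; the sparseness parameter $1/2$ does not help. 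Tracking this loss honestly, your chain produces $[\omega]_{A_3}\cdot\|M\psi\|_{L^3(\omega)}^2\|M_\omega(g)\|_{L^3(\omega)}\lesssim[\omega]_{A_3}^2\|\psi\|_{L^3(\omega)}^2\|g\|_{L^3(\omega)}$, so you only obtain $\|{\mathcal T}_0^{\mathcal S}(\psi)\|_{L^3(\omega)}\lesssim[\omega]_{A_3}\|\psi\|_{L^3(\omega)}$, a full extra half-power off the claim.

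The paper's proof is engineered so that $[\omega]_{A_3}$ is spent exactly once and nowhere else. It never converts $\omega(Q)$ to $\omega(E_Q)$; it keeps the Lebesgue measure $|E_Q|\gtrsim|Q|$ and extracts the $A_3$ constant by rewriting the \emph{unweighted} average $\frac{1}{|B(Q)|}\int_{B(Q)}|\psi|$ as an $\omega^{-1/2}$-weighted average of $\psi\omega^{1/2}$. Concretely, multiplying and dividing the summand by $(\omega^{-1/2}(AB(Q)))^2\,\omega(AB(Q))$ produces the factor $T_3(AB(Q))\,|AB(Q)|\leq[\omega]_{A_3}\,|AB(Q)|\lesssim[\omega]_{A_3}\,|E_Q|$, together with the two weighted averages $\frac{1}{\omega^{-1/2}(AB(Q))}\int_{B(Q)}|\psi\omega^{1/2}|\,\omega^{-1/2}$ and $\frac{1}{\omega(AB(Q))}\int_Q\varphi\,\omega$; these are dominated on $E_Q$ by $M_{\omega^{-1/2}}(\psi\omega^{1/2})$ and $M_\omega(\varphi)$, which are bounded on $L^3(\omega^{-1/2})$ and $L^3(\omega)$ respectively with constants independent of $[\omega]_{A_3}$. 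The unweighted maximal operator and Buckley's theorem never enter. This substitution of weighted for unweighted maximal operators, paid for once through $T_3$, is the trick your proof is missing, and importing it is what repairs the argument.
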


\begin{proof}
Since $\|{\mathcal T}_{0}^{\mathcal S}(\psi)\|_{L^3(\omega)} 
=\|{\mathcal T}_{0}^{\mathcal S}(\psi)^2\|_{L^{3/2}(\omega)}^{1/2}$. 
Using duality, it is enough that we show that for any $\varphi\geq 0$, $\|\varphi\|_{L^{3}(\omega)}=1$
\begin{align*}
&\int_Q \left( {\mathcal T}_0^{\mathcal S}(\psi)(\eta)\right)^2 \varphi(\eta)\omega(\eta) d\sigma(\eta)\\
& =\sum_{Q_j^k\in {\mathcal S}(Q)} \left( \frac1{|B(Q_j^k)|} \int_{B(Q_j^k)}|\psi(\eta)|d\sigma(\eta)\right)^2
\int_{Q_j^k} \varphi(\eta)\omega(\eta) d\sigma(\eta)\\
&\lesssim [\omega]_{A_3} \|\psi\|_{L^3(\omega)}^2.
\end{align*}

We next denote by $T_3(E)=\frac{\omega(E)(\omega^{-1/2}(E))^2}{|E|^3}$. 
The sparsity of the family $(Q_j^k)_{j,k}$ gives that there exists sets $(E_{Q_j^k})_{j,k}$  satisfying that are pairwise disjoint,  and $|E_{Q_j^k}|\gtrsim |Q_j^k|$ (see  \eqref{eq:sparse}). 
Hence, using that $\omega\in A_3$, we have that there exists $A>0$ such that
\begin{align*}&
\left( \frac1{| B(Q_j^k)|}\int_{B(Q_j^k)} |\psi(\eta)|d\sigma(\eta)\right)^2 \int_{Q_j^k} \varphi(\eta)\omega(\eta) d\sigma(\eta) \\
&\lesssim T_3(A B(Q_j^k)) \left( \frac1{\omega^{-1/2}(AB(Q_j^k))} \int_{B(Q_j^k)}|\psi|d\sigma\right)^2\left( \frac1{\omega(AB(Q_j^k))} 
\int_{B(Q_j^k)} \varphi\omega d\sigma \right)|E_{Q_j^k}|\\&
\lesssim [\omega]_{A_3} \int_{E_{Q_j^k}} (M_{\omega^{-1/2}}(\psi\omega^{1/2}))^2 M_\omega (\varphi)d\sigma.
\end{align*}
 
Here $M_{\omega}$ denotes the weighted Hardy Littlewood maximal function defined by
$$M_\omega(\varphi)(\zeta)=\sup_{\zeta\in B}\frac1{\omega(B)}\int_{B} |\varphi|\omega d\sigma.$$
Since $\omega\in A_3$, $w^{-1/2}$ is in $A_{3/2}$ and we have that both $\omega$ and $\omega^{-1/2}$ satisfy a doubling condition. 
Hence both weighted maximal functions are of strong type (see, for instance \cite{Ka}), and using H\"older's inequality, 
the sum of the above estimates can be bounded as follows:
\begin{align*}&
\sum_{Q_j^k\in{\mathcal S}(Q)}\left( \frac1{| B(Q_j^k)|}\int_{B(Q_j^k)} |\psi(\eta)|d\sigma(\eta)\right)^2 \int_{Q_j^k} \varphi(\eta)\omega(\eta) d\sigma(\eta)\\&\lesssim  
[\omega]_{A_3} \int_{\bB} (M_{\omega^{-1/2}}(\psi\omega^{1/2})(\eta))^2 M_\omega (\varphi)(\eta)d\sigma(\eta)\\
&\lesssim [\omega]_{A_3}\|M_{\omega^{-1/2}}(\psi\omega^{1/2})^2\|_{L^{3/2}(\omega^{-1/2})}^2 \|M_\omega (\varphi)\|_{L^{3}(\omega)}\\
&\lesssim [\omega]_{A_3}\|\psi\|_{L^3(\omega)}^2.
\end{align*}
\end{proof}

\begin{lem}\label{lem:TlS}
For each $l\geq 1$, 
\begin{equation}\label{eqn:TlS}
\|{\mathcal T}_{l}^{\mathcal S}(\psi)\|_{L^3(\omega)}\lesssim l^{1/2}[\omega]_{A_3}^{1/2}\|\psi\|_{L^3(\omega)}.
\end{equation}

\end{lem}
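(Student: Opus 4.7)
The approach mirrors the duality argument of Lemma \ref{thm.lernerdec}, with an additional splitting of the sparse family to absorb the $2^l$-enlargement. Since $\|{\mathcal T}_l^{\mathcal S}(\psi)\|_{L^3(\omega)}^2=\|({\mathcal T}_l^{\mathcal S}(\psi))^2\|_{L^{3/2}(\omega)}$, duality reduces the problem to showing that for any $\varphi\ge 0$ with $\|\varphi\|_{L^3(\omega)}=1$,
\begin{equation*}
\Lambda:=\sum_{Q\in{\mathcal S}(Q)}(\psi_{2^l B(Q)})^2\int_Q\varphi\,\omega\,d\sigma \lesssim l\,[\omega]_{A_3}\|\psi\|_{L^3(\omega)}^2.
\end{equation*}

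The plan is to split ${\mathcal S}(Q)$ into $\lesssim l$ subfamilies so that on each subfamily, the $2^l$-enlarged balls align with a genuine sparse structure in one of the $M$ adjacent dyadic systems. For each $Q'\in{\mathcal S}(Q)$, Proposition \ref{prop:dyadicdec}\eqref{item:dec.6} produces a dyadic cube $P(Q')$ in some ${\mathcal D}^j$ with $2^l B(Q')\subset P(Q')$ and $l(P(Q'))\approx 2^l l(Q')$; in particular $\psi_{2^l B(Q')}\lesssim \psi_{P(Q')}$. Partition ${\mathcal S}(Q)=\bigsqcup_{j,m} {\mathcal S}_m^j$ by the index $j$ of the adjacent system into which $P(Q')$ falls and by the ${\mathcal S}$-generation of $Q'$ modulo an integer $L$ of order $l$. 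One chooses $L$ large enough that the $2^{ln}$ volume blow-up coming from the enlargement is compensated by the $2^{-L}$ sparseness gained across $L$ generations of ${\mathcal S}(Q)$, ensuring that within each ${\mathcal S}_m^j$ the associated cubes $\{P(Q')\}$ form a $1/2$-sparse family in ${\mathcal D}^j$ with pairwise disjoint remainder sets $E_{P(Q')}\subset P(Q')$ of measure $\gtrsim |P(Q')|$.

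Grouping the terms of the sum $\sum_{Q'\in{\mathcal S}_m^j}$ by the distinct cubes $P$ appearing as $P(Q')$, and using the pairwise disjointness of the $Q'$'s sharing a common $P$, one gets
$$
\sum_{Q'\in{\mathcal S}_m^j}(\psi_{2^l B(Q')})^2\int_{Q'}\varphi\,\omega\,d\sigma \lesssim \sum_{P}(\psi_P)^2\int_P\varphi\,\omega\,d\sigma,
$$
and the proof of Lemma \ref{thm.lernerdec} now applies verbatim to the sparse family $\{P\}$ in ${\mathcal D}^j$: the uniform $A_3$-bound $T_3(A\cdot P)\le[\omega]_{A_3}$ together with the boundedness on $L^3(\omega)$ and $L^{3/2}(\omega^{-1/2})$ of the weighted maximal operators $M_\omega$ and $M_{\omega^{-1/2}}$ yields
$$
\sum_{P}(\psi_P)^2\int_P\varphi\,\omega\,d\sigma \lesssim [\omega]_{A_3}\|\psi\|_{L^3(\omega)}^2.
$$
Since $({\mathcal T}_l^{\mathcal S}(\psi))^2=\sum_{j,m}({\mathcal T}_l^{{\mathcal S}_m^j}(\psi))^2$, Minkowski's inequality in $L^{3/2}(\omega)$ gives
$$
\|{\mathcal T}_l^{\mathcal S}(\psi)\|_{L^3(\omega)}^2 \le \sum_{j,m}\|{\mathcal T}_l^{{\mathcal S}_m^j}(\psi)\|_{L^3(\omega)}^2 \lesssim ML\,[\omega]_{A_3}\|\psi\|_{L^3(\omega)}^2 \lesssim l\,[\omega]_{A_3}\|\psi\|_{L^3(\omega)}^2,
$$
whose square root is the desired estimate; the factor $l^{1/2}$ arises precisely from taking the square root after summing over the $L\lesssim l$ subfamilies.

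The main technical obstacle will be the construction of the splitting in the second paragraph, i.e.\ verifying that for $L$ of order $l$ the family $\{P(Q')\}_{Q'\in{\mathcal S}_m^j}$ is genuinely $1/2$-sparse in ${\mathcal D}^j$. The key point is that consecutive ${\mathcal S}_m^j$-generations are at least $L$ apart in ${\mathcal S}(Q)$, so iterated ${\mathcal S}$-sparseness supplies the $2^{-L}$ reduction needed to beat the $2^{ln}$ volume blow-up on passing from $Q'$ to $P(Q')$. A choice $L\gtrsim nl$ suffices; since $n$ is fixed, one still has $L\lesssim l$ and the stated dependence $l^{1/2}[\omega]_{A_3}^{1/2}$ is obtained.
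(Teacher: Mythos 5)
Your reduction via duality and the squaring trick $\|{\mathcal T}_l^{\mathcal S}(\psi)\|_{L^3(\omega)}^2=\|({\mathcal T}_l^{\mathcal S}(\psi))^2\|_{L^{3/2}(\omega)}$ matches the paper, but from there you take a genuinely different route. The paper does not attempt to make the enlarged family $\{P(Q')\}$ sparse; instead it invokes Lemma~\ref{lem:DamianLernerPerez}, a pointwise domination of ${\mathcal M}_{l,i}^{\mathcal S}(\psi,\varphi)$ by $l$ times a sum over \emph{new} sparse families of terms $\psi_{Q''}\varphi_{Q''}\Chi_{Q''}$ (these are the balanced products, with both averages over the same small cube). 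That lemma is itself proved via Lerner's local mean oscillation decomposition combined with the weak $(1,1)$ bound $\|{\mathcal F}_l(\Psi)\|_{L^{1,\infty}}\lesssim l\|\Psi\|_{L^1}$; the factor $l$ enters exactly there, not from splitting the family. Your proposal would, if complete, be a more elementary alternative that bypasses the Damián--Lerner--Pérez machinery entirely.

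However, as written the proposal has a real gap: the assertion that, after partitioning ${\mathcal S}(Q)$ by adjacent system $j$ and by ${\mathcal S}$-generation mod $L\gtrsim nl$, the collection $\{P(Q')\}_{Q'\in{\mathcal S}_m^j}$ forms a $1/2$-sparse family (with pairwise disjoint exceptional sets $E_{P}$, $|E_P|\gtrsim|P|$) is only argued heuristically by a volume count, and the count is not the whole story. Specifically: (a) two distinct $Q',Q''$ in the \emph{same} ${\mathcal S}$-generation $C_{m_1}$ can produce $P(Q')$ and $P(Q'')$ that intersect (they are both of size $\approx 2^l l(Q')$ and the underlying $Q$'s can be at mutual distance $\ll 2^l l(Q')$), so the "generations" $C_m$ of the proposed $\{P\}$-family are not automatically pairwise disjoint, and the family must be reorganized before the sparse axioms can even be checked; (b) the descendants $Q'_2$ at ${\mathcal S}$-generation $m_1+L$ whose $P(Q'_2)$ land inside $P(Q'_1)$ are \emph{not} restricted to descendants of $Q'_1$ — they can descend from any $Q''_1\in C_{m_1}$ within distance $\sim 2^l l(Q'_1)$ of $Q'_1$, of which there are up to $\sim 2^{ln}$, so the naive "iterated $2^{-L}$ reduction" is off by another factor $2^{ln}$, which you would need to absorb by taking $L\gtrsim 2nl$ rather than $nl$ and tracking the constants; and (c) the nesting axiom (every next-generation $P_2$ lies inside some current-generation $P_1$) is not automatic, because a deep $Q'_2$ need not have its $P(Q'_2)$ inside the $P$ of \emph{any} ancestor at the intermediate ${\mathcal S}_m^j$-generation, only inside the $P$ of the generation-$m_1$ ancestor, which forces one to define the generations of the $P$-family by maximality rather than inheriting them from ${\mathcal S}$. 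None of these is insuperable, but they are genuine constructions that must be carried out, and the proposal leaves them entirely to the reader; as it stands, the application of Lemma~\ref{thm.lernerdec} to $\{P\}$ is not justified. The paper avoids all of this by leaning on the $L^{1,\infty}$ estimate for ${\mathcal F}_l$, which is where the factor $l$ originates in the proof it gives.
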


\begin{proof}
If $l\geq 1$, we have that
$$\|{\mathcal T}_{l}^{\mathcal S}(\psi)\|_{L^3(\omega)}=\| ({\mathcal T}_{l}^{\mathcal S}(\psi))^2\|_{L^{3/2}(\omega)}^{1/2}.$$ 
Thus 
\begin{equation}\begin{split}\label{eqn:duality}
\| ({\mathcal T}_{l}^{\mathcal S}(\psi))^2\|_{L^{3/2}(\omega)}
&=\sup_{\|\varphi\|_{L^{3}(\omega^{-2})}\leq 1} \int_{\bB} {\mathcal T}_{l}^{\mathcal S}(\psi)^2(\eta)\varphi(\eta)d\sigma(\eta) \\
&=\sup_{\|\varphi\|_{L^{3}(\omega^{-2})}\leq 1}\int_{\bB}{\mathcal M}_l^{\mathcal S} (\psi,\varphi)(\eta)\psi(\eta)d\sigma(\eta),
\end{split}\end{equation}
where
$$
  {\mathcal M}_l^{\mathcal S} (\psi,\varphi)=\sum_{Q_j^k\in{\mathcal S}(Q)} \psi_{2^lB(Q_j^k)} \left( \frac1{|2^l B(Q_j^k)|} 
\int_{Q_j^k} \varphi\right)\Chi_{2^lB(Q_j^k)}(\eta).$$

Using the existence of adjacent families of cubes ${\mathcal D}^i$, $i=1,\cdots, M$ as in Proposition \ref{prop:dyadicdec}, 
the cubes $Q_j^k$ can be distributed in disjoint families ${\mathcal S}^i\in {\mathcal D}^i$ 
such that for any $Q_j^k\in {\mathcal S}^i$ there exists a dyadic cube $P_{j,k}^{l,i}\in{\mathcal D}^i$ 
with $2^l B(Q_j^k)\subset P_{j,k}^{l,i}$ and $l_{P_{j,k}^{l,i}}\lesssim 2^l l_{Q_j^k}$.
Thus
$$
{\mathcal M}_l^{\mathcal S} (\psi,\varphi)(\zeta)
\lesssim \sum_{i=1}^L {\mathcal M}_{i,l}^{\mathcal S} (\psi,\varphi)(\zeta),
$$
where
$$
{\mathcal M}_{l,i}^{\mathcal S} (\psi,\varphi)(\zeta)
=\sum_{Q_j^k\in {\mathcal S}_i}\psi_{P_{j,k}^{l,i}} \left( \frac1{|P_{j,k}^{l,i}|}\int_{Q_j^k} \varphi\right) \Chi_{P_{j,k}^{l,i}} (\zeta).
$$

The following lemma, for $\R^n$, can be found in \cite{Da-Le-Pe}.
\begin{lem}\label{lem:DamianLernerPerez}
If the sum ${\mathcal M}_{l,i}^{\mathcal S} (\psi,\varphi)$ is finite, there exists a finite number of cubes $Q_\nu\in{\mathcal D}^i$ 
covering its support and such that for any cube $Q_\nu$, there exist two families of sparse cubes 
${\mathcal S}^{i,1}$, ${\mathcal S}^{i,2}$ of ${\mathcal D}^i$, $i=1,\cdots, M$ satisfying that for $\zeta\in Q_\nu$,
$$
{\mathcal M}_{l,i}^{\mathcal S} (\psi,\varphi)(\zeta) 
\lesssim l\sum_{k=1}^2 \sum_{Q_j^k\in {\mathcal S}_{i,k}} \psi_{Q_j^k}\varphi_{Q_j^k} \Chi_{Q_j^k}(\zeta).
$$
\end{lem}
\begin{proof}
The proof of this lemma is, basically, an application of Lerner's decomposition  and the estimate
$$
\|{\mathcal F}_{l}(\Psi)\|_{L^{1,\infty}} \lesssim l\|\Psi\|_{L^1},
$$
where 
$$
{\mathcal F}_{l}(\Psi)= \sum_{j,k} \left( \frac1{|P_{j,k}^{l,i}|} \int_{Q_j^k}\Psi(\eta)d\sigma(\eta)\right)\Chi_{P_{j,k}^{l,i}},
$$
which can be found in Lemma 3,2 in \cite{Le2}. We remark that both constructions can be adapted to the framework 
of homogeneous spaces (see Remark 4.22 and Lemma 6.5 in \cite{An-Va}). 
In consequence, the nonisotropic version of Lemma \ref{lem:DamianLernerPerez} for the unit sphere holds.
\end{proof}

Now we can finish the proof of Lemma \ref{lem:TlS}, i.e. the estimate 
of $\|{\mathcal T}_l^{\mathcal S}(\psi)\|_{L^3(\omega)}$.

 Using the duality expression obtained in \eqref{eqn:duality},  Lemma \ref{lem:DamianLernerPerez} 
and H\"older's inequality, we have that
\begin{align*}
\int_{Q_\nu}{\mathcal M}_{l,i}^{\mathcal S} (\psi,\varphi)(\eta)\varphi(\eta)d\sigma(\eta)&\lesssim 
l \sum_{k=1,2} \sum_{Q_j^k\in{\mathcal S}_{i,k}} \left( \psi_{Q_j^k}\right)^2\int_{Q_j^k}\varphi(\eta)d\sigma(\eta)\\
&\lesssim
l \sum_{k=1,2}\int_{\bB}\left({\mathcal T}_{0}^{{\mathcal S}_{i,k}}(\psi)\right)^2(\eta)\varphi(\eta)d\sigma(\eta).
\end{align*}
Summing up over $Q_\nu$, and using \eqref{eqn:duality} and Lemma \ref{thm.lernerdec}, we deduce that
\begin{align*}
\|{\mathcal T}_{l}^{\mathcal S}(\psi)\|_{L^3(\omega)}=\|{\mathcal T}_l^{\mathcal S}(\psi)^2\|_{L^{3/2}(\omega)}^{1/2}
&\lesssim
\left(l \max_{1\leq i\leq M}\sup_{{\mathcal S}\in {\mathcal D}^i}\|{\mathcal T}_{0}^{\mathcal S}(\psi)^2\|_{L^3(\omega)}\right)^{1/2}\\
&\lesssim l^{1/2} [\omega]_{A_3}^{1/2} \|\psi\|_{L^3(\omega)}.
\end{align*}
\end{proof}

We can now finish the proof of Theorem \ref{thm:Bergman}. By the last lemmas we have that
\begin{align*}
\|{\mathcal T(\psi)}\|_{L^3(\omega)} 
&\leq \sum_{l^\geq 0} \frac1{2^{l/4}} \|{\mathcal T}_{l}^{\mathcal S}(\psi)\|_{L^3(\omega)} \\
&\lesssim \sum_{m^\geq 0} \frac{l^{1/2}}{2^{l/4}} [\omega]_{A_3}^{1/2}\|\psi\|_{L^3(\omega)}
\lesssim [\omega]^{1/2}_{A_3} \|\psi\|_{L^3(\omega)}.
\end{align*}

Thus, using the estimate \eqref{eqn:lernerthm2} and the continuity of $M(\psi)$, we  obtain
$$ 
\|\left(G(\psi)^2-m_Q(G(\psi)^2)\right)^{1/2}\|_{L^{3}(\omega)}
\lesssim [\omega]_{A_3}^\frac12\|\psi\|_{L^3(\omega)}.
$$

Hence
\begin{align*}
\|G(\psi)\|_{L^3(\omega)}&=\|G(\psi)^2\|_{L^{3/2}(\omega)}^{1/2} \\
&\lesssim \|G(\psi)^2-m_Q(G(\psi)^2)\|_{L^{3/2}(\omega)}^{1/2}+ \|m_Q(G(\psi)^2)\|_{L^{3/2}(\omega)}^{1/2}\\
&\lesssim  [\omega]^{1/2}_{A_3} \|\psi\|_{L^3(\omega)}+
\|m_Q(G(\psi)^2)\|_{L^{3/2}(\omega)}^{1/2}.
\end{align*}

Let us check that we also have that 
$$\|m_Q(G(\psi)^2)\|_{L^{3/2}(\omega)}^{1/2}\lesssim  [\omega]^{1/2}_{A_3} \|\psi\|_{L^3(\omega)}.$$
Indeed,
\begin{align*}
m_{Q}((G(\psi))^2)^{1/2} &\leq \left(\left( G(\psi)^2\Chi_Q\right)^*(|Q|/2)\right)^{1/2}
= \left( G(\psi)\Chi_Q\right)^*(|Q|/2) \\
&\lesssim \frac1{|Q|}\int_{Q}|\psi(\zeta)|d\sigma(\zeta).
\end{align*}

Consequently,
$$
\left( \int_{Q} m_{\bB}((G(\psi))^2)^{3/2}\omega(\zeta) d\sigma(\zeta)\right)^{1/3}
\lesssim \omega({Q})^{1/3} \frac1{|Q|}\int_{Q} |\psi(\zeta)|d\sigma(\zeta).
$$

But
\begin{align*}
\int_{Q} |\psi(\zeta)|d\sigma(\zeta)
&\leq \left(\int_{Q} |\psi(\zeta)|^3\omega(\zeta) d\sigma(\zeta) \right)^{1/3} 
\left(\int_{Q} \omega^{-1/2}(\zeta) d\sigma(\zeta) \right)^{2/3}\\
&=\|\psi\|_{L^3(\omega)} \left(\int_{Q} \omega^{-1/2} d\sigma \right)^{2/3}.
\end{align*}
Thus, 
\begin{equation}\label{eqn:lernerthm3}\begin{split}
&
\left( \int_{Q} m_{Q}((G(\psi))^2)^{3/2}(\zeta)\omega (\zeta)d\sigma(\zeta)\right)^{1/3}\\
&
\lesssim \|\psi\|_{L^3(\omega)} \frac{\omega({Q})^{1/3}}{|Q|} \left(\int_{Q} \omega^{-1/2} (\zeta)d\sigma (\zeta)\right)^{2/3}\\
&
\lesssim
\|\psi\|_{L^3(\omega)}  [\omega]_{A_3}^{1/3}\leq \|\psi\|_{L^3(\omega)} [\omega]_{A_3}^{1/2}.
\end{split}\end{equation}

Finally, applying Theorem \ref{thm:duoandikoetxea}, we obtain:

$$
\|{\mathcal C}(\psi)\|_{F^{p,2}_0(\omega)}=\|G(\psi)\|_{L^p(\omega)} 
\lesssim [\omega]^{\max\{1/2,1/(p-1)\}}_{A_p}\|\psi\|_{L^p(\omega)},
$$
which ends the proof of the theorem.\qed

\begin{rem}\label{rem:equivnormsC}
In Section \ref{sec:estimate} we have shown that
$$
\left\|(1-|z|^2)\left( I+ \frac{R}n\right)\Ca(\psi)\right\|_{L^{p,2}(\omega)}
\lesssim [\omega]^{\max\{1/2,1/(p-1)\}}_{A_p}\|\psi\|_{L^p(\omega)}.
$$
  However, from this estimate we can obtain the analogous estimate for the operator 
	$(1-|z|^2)\left( \alpha I+ \beta R\right)\Ca$ with $\alpha,\beta \in \R$.	Indeed,  assuming the above estimate, it is enough to show that
$$
\|(1-|z|^2)\Ca(\psi)\|_{L^{p,2}(\omega)}\lesssim [\omega]^{\max\{1/2,1/(p-1)\}}_{A_p}\|\psi\|_{L^p(\omega)}.
$$	
	And this is an immediate consequence of the relation
\begin{align*}
\Ca(\psi)(z)&=\int_\bB\frac{\psi(\z)}{(1-z\overline \z)^{n+1}}d\sigma(\z)
+\sum_{j=1}^n z_j \int_\bB\frac{\overline \z_j\psi(\z)}{(1-z\overline \z)^{n+1}}d\sigma(\z)\\
&=\left(I+\frac{R}{n}\right)\Ca(\psi)(z)+\sum_{j=1}^n z_j\left(I+\frac{R}{n}\right)\Ca(\overline\z_j\psi)(z),
\end{align*}
and the fact that $\|\overline{\zeta_j}\psi\|_{L^p(\omega)}\leq  \|\psi\|_{L^p(\omega)}$ .
\end{rem}

\section{Proof of Theorem \ref{cor:Bergman} and of the sharpness in Theorem \ref{thm:Bergman}}\label{sec:sharpness}

In order to prove the sharpness of the estimate  
$$
\|\Be:L^{p,2}(\omega)\to H^{p}(\omega)\|
\le C(p,n)[\omega]_p^{\max\{1/(2(p-1)),1\}}=C(p,n)\max\{[\omega]_{A_p},[\omega']_{A_{p'}}^{1/2}\},
$$
we  use the techniques in \cite{Fe-Pi} (see also  \cite{Lu-Pe-Re}). They are based in the following lemma,
whose proof follows from the Rubio de Francia algorithm.

\begin{lem}\label{lem:Du}
Let $1\le p_0<\infty$ and let $C,\beta>0$.  If  the pair $(\varphi,\psi)$  of nonnegative functions  satisfies
$$
\left(\int_{\bB} \psi^{p_0}\omega d\sigma\right)^{1/p_0}
\le C [\omega]_{A_1}^\beta \left(\int_{\bB} \varphi^{p_0}\omega d\sigma\right)^{1/p_0}\qquad\text{for all $\omega\in A_1$,}
$$
 then for any $p>p_0$ there exists a constant $C'=C'(n,\beta, p_0,C)$, such that 
$$
\|\psi\|_{L^{p}}\le C' p^{\beta} \|\varphi\|_{L^{p}}.
$$

Consequently, if the power $\beta$ of $p$ is sharp, the power $\beta$ of $[\omega]_{A_1}$ is also sharp.
\end{lem}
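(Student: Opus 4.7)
The plan is to run the Rubio de Francia extrapolation algorithm with quantitative control on the $A_1$-constant, keeping careful track of the dependence on $p$. Fix $p>p_0$ and set $q=(p/p_0)'$. By duality for $L^{p/p_0}$,
$$
\|\psi\|_{L^p}^{p_0}=\|\psi^{p_0}\|_{L^{p/p_0}}=\sup\Bigl\{\int_{\bB}\psi^{p_0}h\,d\sigma:\ h\ge 0,\ \|h\|_{L^q}=1\Bigr\}.
$$
First, for such an $h$ I would define the Rubio de Francia operator
$$
Rh(\zeta)=\sum_{k=0}^\infty\frac{M^k h(\zeta)}{(2\|M\|_{L^q})^k},
$$
where $M$ is the nonisotropic Hardy--Littlewood maximal function and $M^k$ denotes its $k$-fold iteration. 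Standard verifications yield $h\le Rh$ pointwise, $\|Rh\|_{L^q}\le 2\|h\|_{L^q}$, and, since $M(Rh)\le 2\|M\|_{L^q}\,Rh$, that $Rh\in A_1$ with $[Rh]_{A_1}\le 2\|M\|_{L^q}$.

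Next I would apply the hypothesis of the lemma with $\omega=Rh\in A_1$ and follow with H\"older's inequality on the right-hand side, obtaining
$$
\int_{\bB}\psi^{p_0}h\,d\sigma\le\int_{\bB}\psi^{p_0}Rh\,d\sigma\le C^{p_0}[Rh]_{A_1}^{\beta p_0}\int_{\bB}\varphi^{p_0}Rh\,d\sigma\le 2\,C^{p_0}[Rh]_{A_1}^{\beta p_0}\|\varphi\|_{L^p}^{p_0}.
$$
Taking the supremum over $h$ reduces the proof to the quantitative maximal bound $\|M\|_{L^q}\lesssim q'=p/p_0$, which follows from Marcinkiewicz interpolation between the weak--$(1,1)$ estimate for $M$ on $\bB$ and its trivial $L^\infty$ bound. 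Inserting this into the preceding display and extracting the $p_0$-th root gives $\|\psi\|_{L^p}\le C' p^\beta\|\varphi\|_{L^p}$ with $C'$ depending only on $C$, $\beta$, $p_0$ and $n$.

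For the final ``consequently'' assertion, I would argue by contrapositive: any improvement of the exponent $\beta$ in the $A_1$-inequality to some $\beta'<\beta$ would, through exactly the extrapolation just described, produce $\|\psi\|_{L^p}\lesssim p^{\beta'}\|\varphi\|_{L^p}$, contradicting the assumed sharpness of the exponent of $p$.

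The only delicate point is to obtain $\|M\|_{L^q}\lesssim q'$ with the correct $p$-dependence, since the weighted bound of Theorem \ref{thm:bucley} specialized to $\omega=1$ merely gives boundedness without the blow-up rate as $q\to 1$. Once Marcinkiewicz interpolation in the homogeneous setting of $\bB$ is invoked to supply this sharp rate, every remaining step of the Rubio de Francia scheme (pointwise domination, $L^q$-control, self-improvement to $A_1$) is routine.
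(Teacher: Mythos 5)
Your proof is correct and follows essentially the same route as the paper's: dualize $\|\psi\|_{L^p}^{p_0}$ against the conjugate exponent of $p/p_0$, build the Rubio de Francia majorant from the iterated maximal function, invoke the hypothesis with the resulting $A_1$ weight, and finish with H\"older, tracking that $[Rh]_{A_1}\lesssim\|M\|_{L^q}\lesssim p$. The only difference is cosmetic notation (your $q$ is the paper's $q'$) and that you helpfully flag the need for the quantitative unweighted bound $\|M\|_{L^q}\lesssim q'$ and supply Marcinkiewicz interpolation for it, a point the paper leaves implicit.
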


\begin{proof}
Let $q=p/p_0>1$. By duality, 
$$
\|\psi\|_{L^p}^{p_0}=\sup_{\|\phi\|_{L^{q'}}=1}\left|\int_\bB |\psi|^{p_0} \phi d\sigma\right|.
$$

Assume that $\phi\ge 0$. By the Rubio the Francia algorithm, the function 
$$
\omega(\z)=\sum_{k=0}^\infty \frac{M^k(\phi)(\z)}{(2\|M\|_{L^{q'}})^k}\qquad \text {($M^k$ denotes the $k$-th iterate of $M$)}
$$
satisfies $\phi (\z)\le \omega(\z)$ a.e., $\|\omega\|_{L^{q'}}\le 2\|\phi\|_{L^{q'}}=2$ 
and $[\omega]_{A_1}\le 2 \|M\|_{L^{{q'}}}\le c q\le  c p$ for some $c>0$.

Thus,
\begin{align*} 
\int_\bB |\psi|^{p_0} \phi d\sigma
&\le \int_S |\psi|^{p_0} \omega d\sigma 
\le C^{p_0} [\omega]_{A_1}^{\beta p_0}\int_\bB |\varphi|^{p_0} \omega d\sigma\\
&\le C^{p_0} [\omega]_{A_1}^{\beta p_0}\|\varphi\|_{L^{p}}^{p_0}\|\omega\|_{L^{q'}}
\le 2 C^{p_0} c^{\beta p_0} p^{\beta p_0} \|\varphi\|_{L^{p}}^{p_0}.
\end{align*}
This ends the proof.
\end{proof}

\begin{cor}\label{cor:Du}
Let $1<p_0<\infty$. If there exist positive constants $C$ and $\beta$ such that for any $\omega\in A_{p_0}$
$$
\|\Be:L^{p_0,2}(\omega)\to H^{p_0}(\omega)\|\le C [\omega]_{A_{p_0}}^\beta,
$$
then for any $1<p<\infty$, there exists $C'>0$ which does not depend on $p$, such that 
$$
\|\Be:L^{p,2}\to H^{p}\|\le C' \max\{p^\beta, (p')^{\beta(p_0-1)}\}.
$$
\end{cor}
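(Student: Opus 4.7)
The plan is to split the analysis into the two regimes $p>p_0$ and $p<p_0$, reducing each to a direct application of Lemma \ref{lem:Du}, and to bridge the two by the duality of Proposition \ref{lem:adjBergman}.

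First I would handle the case $p>p_0$. Given a nonnegative $\varphi$ on $\B$, set
$$
\Phi(\z)=\left(\int_0^1 |\varphi(r\z)|^2\,\frac{2nr^{2n-1}\,dr}{1-r^2}\right)^{1/2},
$$
so that $\|\varphi\|_{L^{p,2}(\omega)}=\|\Phi\|_{L^p(\omega)}$ for every weight $\omega$. The pair $(\Phi,|\Be(\varphi)|)$ of nonnegative functions on $\bB$ satisfies, by hypothesis,
$$
\|\Be(\varphi)\|_{L^{p_0}(\omega)}\le C[\omega]_{A_{p_0}}^{\beta}\|\Phi\|_{L^{p_0}(\omega)}\qquad\forall\,\omega\in A_{p_0}.
$$
Restricting to $\omega\in A_1$ and using $A_1\subset A_{p_0}$ with $[\omega]_{A_{p_0}}\le [\omega]_{A_1}$, the hypothesis of Lemma \ref{lem:Du} holds with the same exponent $\beta$, uniformly over the family of pairs parametrized by $\varphi$. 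Lemma \ref{lem:Du} then delivers, for every $p>p_0$, the unweighted estimate
$$
\|\Be(\varphi)\|_{L^p}\le C'p^{\beta}\|\Phi\|_{L^p}=C'p^{\beta}\|\varphi\|_{L^{p,2}},
$$
with $C'$ independent of $p$.

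For the case $p<p_0$ I would dualize. By the last statement of Proposition \ref{lem:adjBergman},
$$
\|\Be:L^{p,2}(\omega)\to H^{p}(\omega)\|=\|\Ca:L^{p'}(\omega')\to F_0^{p',2}(\omega')\|,
$$
and the identity $[\omega']_{A_{p_0'}}=[\omega]_{A_{p_0}}^{1/(p_0-1)}$ converts the hypothesis into
$$
\|\Ca(\psi)\|_{F^{p_0',2}_0(\omega')}\le C[\omega']_{A_{p_0'}}^{\beta(p_0-1)}\|\psi\|_{L^{p_0'}(\omega')},\qquad\omega'\in A_{p_0'}.
$$
Writing the left-hand side as $\|G(\psi)\|_{L^{p_0'}(\omega')}$ where
$$
G(\psi)(\z)=\left(\int_0^1(1-r^2)^2\left|\left(I+\tfrac{R}{n}\right)\Ca(\psi)(r\z)\right|^2\frac{2nr^{2n-1}\,dr}{1-r^2}\right)^{1/2},
$$
and restricting to $\omega'\in A_1$ as before, Lemma \ref{lem:Du} applied to the pair $(\psi,G(\psi))$ yields, for every $q>p_0'$,
$$
\|\Ca:L^{q}\to F^{q,2}_0\|\le C' q^{\beta(p_0-1)}.
$$
Taking $q=p'$ (so $p<p_0$) and dualizing back gives $\|\Be:L^{p,2}\to H^p\|\le C'(p')^{\beta(p_0-1)}$.

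Combining the two regimes (and noting that at $p=p_0$ the hypothesis with $\omega=1$ already gives a constant bound) yields the stated $C'\max\{p^{\beta},(p')^{\beta(p_0-1)}\}$ estimate. The main obstacle is purely bookkeeping: one must keep track of the translation between the $A_{p_0}$- and $A_{p_0'}$-exponents under duality to recover the correct power $\beta(p_0-1)$ on the dual side, and verify that Lemma \ref{lem:Du} is applicable to our mixed-norm setting once the $L^{p,2}(\omega)$-norm has been rewritten as an $L^p(\omega)$-norm on $\bB$. Beyond that, Lemma \ref{lem:Du} functions as a black box in each regime.
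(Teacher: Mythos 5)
Your proof is correct and follows essentially the same argument as the paper: for $p>p_0$ you apply Lemma \ref{lem:Du} directly to the pair $(\Phi,|\Be(\varphi)|)$ after restricting to $A_1$ weights, and for $p<p_0$ you dualize via Proposition \ref{lem:adjBergman}, translate the weight constant through $[\omega']_{A_{p_0'}}=[\omega]_{A_{p_0}}^{1/(p_0-1)}$, and apply Lemma \ref{lem:Du} to $(\psi,G(\psi))$, exactly as in the text. (One small note: the paper's statement $\gamma=\beta(p_0'-1)$ in its proof is a typo for $\gamma=\beta(p_0-1)$, which is the exponent you correctly derive.)
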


\begin{proof}
First we prove the case $p>p_0$, that is  $\|\Be:L^{p,2}\to H^{p}\|\le C' p^\beta$.

Let $C_c(\B)$ be the space of continuous functions with compact support on $\B$. 
This space is in $L^{p_0,2}(\omega)$ for any $\omega\in A_{p_0}$ and it is dense in $L^{p,2}$ for every $1<p<\infty$.

For each $\omega\in A_1$, since  $[\omega]_{A_{p_0}}\le [\omega]_{A_{1}}$, we obtain 
$$
\|\Be:L^{p_0,2}(\omega)\to H^{p_0}(\omega)\|\le C[\omega]_{A_{p_0}}^\beta\le C[\omega]_{A_{1}}^\beta.
$$
Hence,  Lemma \ref{lem:Du} applied to the functions 
$$
\varphi(\z)=\left(\int_0^1 |\vartheta(r\z)|^2 \frac{2n r^{2n-1}}{1-r^2}dr\right)^{1/2}\quad\text{and}\quad \psi(\z)
=\Be(\vartheta)(\z),\qquad \vartheta\in C_c(\B),
$$
gives $\|\Be:L^{p,2}\to H^{p}\|\le C' p^\beta$ for any $p>p_0$.

Now we consider the case $1<p<p_0$. 
By Proposition \ref{lem:adjBergman}
$$
\|\Be:L^{p_0,2}(\omega)\to H^{p_0}(\omega)\|= \|\Ca:L^{p^{\prime}_0}(\omega')\to F^{p'_0,2}_0(\omega')\|
\le C [\omega']_{A_{p^\prime_0}}^\gamma
$$
with $\gamma=\beta(p^\prime_0-1)$.

Then,  for any $\omega'\in A_1$ we have  
$
\|\Ca:L^{p^{\prime}_0}(\omega')\to F^{p^\prime_0,2}_0(\omega')\|\le C[\omega']_{A_{1}}^\gamma.
$

Note that  $C(\bB)$, the space of continuous functions on $\bB$,  
is in $L^{p_0^\prime}(\omega')$ for any $\omega'\in A_1$ and it is dense in $L^{p'}$ for any $1<p'<\infty$.
Hence,  the above estimate and Lemma \ref{lem:Du} applied to the functions $\varphi\in C(\bB)$ and 
$$
\psi(\z)=
\left(\int_0^1 (1-r^2)\left|\left(I+\frac{R}{n}\right)\Ca(\varphi)(r\z)\right|^2 r^{2n-1}dr\right)^{1/2},
$$
gives $\|\Ca:L^{p'}\to F^{p',2}_0\| \le C' (p')^\gamma$.

The  relation $\|\Be:L^{p,2}\to H^{p}\|=\|\Ca:L^{p'}\to F^{p',2}_0\|$ finishes the proof.
\end{proof}

By Theorem \ref{thm:Bergman}, the hypotheses in the above corollary are true for $p_0=3/2$ and $\beta=1$. 
Thus, we have:

\begin{cor}\label{cor:estimthm12}
There exists $C>0$ such for any $1<p<\infty$ we have
$$
\|\Be:L^{p,2}\to H^{p}\|\approx \|\Ca:L^{p'}\to F^{p',2}_0\|\le C \max\{p, \sqrt{p'}\}.
$$
\end{cor}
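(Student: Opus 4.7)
The plan is to invoke Corollary \ref{cor:Du} directly, with the choice $p_0=3/2$. The point is that this value of $p_0$ is the one which makes the exponent in Theorem \ref{thm:Bergman} simplify optimally: substituting $p=3/2$ into $\max\{1,1/(2(p-1))\}$ gives $\max\{1,1\}=1$. Hence Theorem \ref{thm:Bergman} yields, for every $\omega\in A_{3/2}$,
$$
\|\Be:L^{3/2,2}(\omega)\to H^{3/2}(\omega)\|\le C(n)\,[\omega]_{A_{3/2}},
$$
which is exactly the hypothesis of Corollary \ref{cor:Du} with $\beta=1$.

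Once this is checked, Corollary \ref{cor:Du} immediately delivers, for every $1<p<\infty$,
$$
\|\Be:L^{p,2}\to H^{p}\|\le C'\max\{p^{\,1},\,(p')^{1\cdot(3/2-1)}\}=C'\max\{p,\sqrt{p'}\},
$$
which is the desired numerical estimate. The equivalence $\|\Be:L^{p,2}\to H^{p}\|\approx\|\Ca:L^{p'}\to F^{p',2}_0\|$ is just the duality identity already proved (with trivial weight) in Proposition \ref{lem:adjBergman}, namely $\|\Be:L^{p,2}(\omega)\to L^{p}(\omega)\|=\|\Ca:L^{p'}(\omega')\to F_0^{p',2}(\omega')\|$ specialized to $\omega=\omega'=1$, combined with the identification of $H^p$ with its boundary values in $L^p$.

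There is essentially no obstacle: the corollary is a one-line application of already established machinery. The only slightly delicate point is to verify that $p_0=3/2$ is indeed the crossover value for which the exponent in Theorem \ref{thm:Bergman} equals $1$ (so that $\beta=1$ is admissible), and then to read off that the output exponent on $p'$ is $\beta(p_0-1)=1/2$, matching $\sqrt{p'}$. In short, the proof consists of choosing the correct $p_0$ inside Corollary \ref{cor:Du} and observing that the resulting bound is exactly $\max\{p,\sqrt{p'}\}$.
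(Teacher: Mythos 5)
Your proof is correct and follows exactly the paper's own argument: apply Corollary \ref{cor:Du} with $p_0=3/2$ and $\beta=1$, which is admissible since $\max\{1,1/(2(p_0-1))\}=1$ at $p_0=3/2$, and then read off $\max\{p^\beta,(p')^{\beta(p_0-1)}\}=\max\{p,\sqrt{p'}\}$. The identification of the two operator norms via Proposition \ref{lem:adjBergman} is also as in the paper.
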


In order to prove that the exponents $\beta=1$ and $\gamma=1/2$ cannot be replaced by any smaller one, 
we consider the function  $f(z)=\left(\frac{1+z}{1-z}\right)^{\alpha}$, with $0<\alpha<1$. 
This function   was used by several authors  to estimate the norms of some  classical operators.
For instance in \cite{Ho-Ver}, the authors use this function to prove that 
\begin{equation} \label{eqn:Cauchynorm}
\|\Ca:L^p\to H^p\|=\frac{1}{\sin(\pi/p)}\approx \max\{p,p'\}.
\end{equation}

The next lemma states the properties of these functions that we will need.

\begin{lem} \label{lem:f1}
Let $1<p<\infty$ and  $0<\delta<1$, let $f_{\delta,p}(z)=\left(\frac{1+z}{1-z}\right)^{\delta/p}$. 
Denote by $u_{\delta,p}$ and $v_{\delta,p}$ its  real and imaginary parts, respectively. 

Then, for each $p$ there exists $\delta_p>0$ such that for any $\delta_p<\delta <1$, we have:
\begin{enumerate}
\item \label{item:f11} For $0<\theta<2\pi$, $|v_{\delta,p}(e^{i\theta})|=\tan \frac{\delta \pi}{2p} \,\,u_{\delta,p}(e^{i\theta})$.
\item \label{item:f12} $\|f_{\delta,p}\|_{H^{p}}\approx \sqrt{p}\|f_{\delta,p}\|_{F^{p,2}_0}\approx \frac {1}{(1-\delta)^{1/p}}.$
\item \label{item:f13} If $1<p\le 2$, then $\|\Ca(u_{\delta,p})\|_{H^{p}}\approx p' \|u_{\delta,p}\|_{L^{p}}$.
\item \label{item:f14} If $p\ge 2$, then $ \|\Ca(v_{\delta,p})\|_{H^{p}}\approx p\|v_{\delta,p}\|_{L^{p}} $.
\end{enumerate}

Furthermore, the constants in the above equivalences do not depend on $p$ and $\delta$.
\end{lem}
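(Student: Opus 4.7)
The plan is to compute everything by hand, exploiting the very special form that $f_{\delta,p}$ takes on the boundary of the disk: since
$$\frac{1+e^{i\theta}}{1-e^{i\theta}} = i\cot(\theta/2), \qquad 0<\theta<2\pi,$$
is purely imaginary with argument $\pm\pi/2$, one has $f_{\delta,p}(e^{i\theta}) = |\cot(\theta/2)|^{\delta/p}\,e^{\pm i\pi\delta/(2p)}$, and claim \ref{item:f11} follows by reading off real and imaginary parts.

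For \ref{item:f12}, the $H^p$ norm reduces to the explicit integral $\|f_{\delta,p}\|_{H^p}^p = \frac{1}{2\pi}\int_0^{2\pi}|\cot(\theta/2)|^\delta d\theta = 1/\cos(\pi\delta/2)$, which I would evaluate through the classical Beta-function identity $\int_0^{\pi/2}\cot^\delta u\,du = \pi/(2\cos(\pi\delta/2))$; since $\cos(\pi\delta/2) \approx \pi(1-\delta)/2$ for $\delta$ close to $1$, this yields $\|f_{\delta,p}\|_{H^p}\approx (1-\delta)^{-1/p}$ with constants independent of $p$ and $\delta$. The Triebel-Lizorkin norm is the subtle part. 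A direct differentiation gives
$$(I + R)f_{\delta,p}(z) = f_{\delta,p}(z)\left(1 + \frac{2\delta z/p}{1-z^2}\right),$$
whose dominant behaviour near $z=1$ is of size $(\delta/p)\,|1-z|^{-1-\delta/p}$. Substituting this into the area integral $G(\zeta)^2 = \int_0^1(1-r^2)|(I+R)f_{\delta,p}(r\zeta)|^2\,2r\,dr$ and rescaling first by $u = 1-r$ and then by $u = |\theta|v$ (with $\zeta = e^{i\theta}$) reduces it to
$$G(\zeta)^2 \approx 2(\delta/p)^2\,|\theta|^{-2\delta/p}\int_0^\infty \frac{v\,dv}{(v^2+1)^{1+\delta/p}} \approx (\delta/p)\,|\theta|^{-2\delta/p},$$
in which \emph{one} factor of $p/\delta$ re-emerges from the convergent $v$-integral. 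Raising to the power $p/2$ and integrating in $\theta$ produces $\|f_{\delta,p}\|_{F^{p,2}_0}\approx (\delta/p)^{1/2}(1-\delta)^{-1/p}$, which is exactly the asserted relation $\sqrt{p}\,\|f_{\delta,p}\|_{F^{p,2}_0}\approx \|f_{\delta,p}\|_{H^p}$.

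Claims \ref{item:f13} and \ref{item:f14} are then almost free. Since $f_{\delta,p}$ is holomorphic with $f_{\delta,p}(0)=1$, the Cauchy projection annihilates the conjugate $\overline{f_{\delta,p}}$ up to its zero Fourier mode $\overline{f_{\delta,p}(0)}=1$, so
$$\Ca(u_{\delta,p}) = \tfrac{1}{2}(f_{\delta,p}+1), \qquad \Ca(v_{\delta,p}) = \tfrac{1}{2i}(f_{\delta,p}-1),$$
both of $H^p$ norm comparable to $\|f_{\delta,p}\|_{H^p}$ once $\delta$ is close enough to $1$ that $\|f_{\delta,p}\|_{H^p}\gg 1$. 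From \ref{item:f11} one reads $\|u_{\delta,p}\|_{L^p} = \cos(\pi\delta/(2p))\|f_{\delta,p}\|_{H^p}$ and $\|v_{\delta,p}\|_{L^p} = \sin(\pi\delta/(2p))\|f_{\delta,p}\|_{H^p}$, so the ratios $\|\Ca(u_{\delta,p})\|_{H^p}/\|u_{\delta,p}\|_{L^p}$ and $\|\Ca(v_{\delta,p})\|_{H^p}/\|v_{\delta,p}\|_{L^p}$ collapse to $\sec(\pi\delta/(2p))$ and $\csc(\pi\delta/(2p))$. Choosing $\delta$ sufficiently close to $1$ (this is how $\delta_p$ enters), I would use $\cos(\pi/(2p)) = \sin(\pi/(2p'))\approx \pi/(2p')$ for $1<p\le 2$, and $\sin(\pi/(2p))\approx \pi/(2p)$ for $p\ge 2$, to conclude that these ratios are of order $p'$ and $p$ respectively, uniformly in $p$.

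The main obstacle is the Triebel-Lizorkin computation in \ref{item:f12}. The naive pointwise bound on $(I+R)f_{\delta,p}$ would yield $\|f_{\delta,p}\|_{F^{p,2}_0}\approx (1-\delta)^{-1/p}$ without any $1/\sqrt{p}$ improvement, and this would destroy the sharpness of the exponent $1/2$ in Theorem \ref{cor:Bergman}. Extracting the correct factor requires the asymptotic analysis of the area integral sketched above, in which the extra factor $\delta/p$ gained from differentiation is only \emph{partially} cancelled by the convergent integral in $v$.
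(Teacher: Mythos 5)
Your proposal is correct and follows essentially the same route as the paper: identify $f$ on the boundary via $\frac{1+e^{i\theta}}{1-e^{i\theta}}=i\cot(\theta/2)$, estimate $\|f\|_{H^p}$ by a $\theta^{-\delta}$ integral (you supply the closed Beta-function value, the paper settles for $\approx(1-\delta)^{-1/p}$), extract the extra $\sqrt{\delta/p}$ in the area integral from the convergent radial/angular rescaling of $(1-|z|^2)Rf$ near $z=1$, and finally use the Cauchy projection identities $\Ca(u)=\tfrac12(f+1)$, $\Ca(v)=\tfrac1{2i}(f-1)$ together with part (i) to reduce (iii)--(iv) to $\sec$ and $\csc$ of $\pi\delta/(2p)$. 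The only place where you are a bit lighter than the paper is the two-sided control of the area integral (the paper isolates $g=(1-|z|^2)Rf$ and $h=(1-|z|^2)f$, proves $\|h\|_{L^{p,2}}\lesssim 1$, and verifies both $\lesssim$ and $\gtrsim$ for $\|g\|_{L^{p,2}}$, including the subordinate contribution near $z=-1$), but your sketch is readily completed in the same way.
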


\begin{proof}
In order to simplify the notations we will write $f$, $u $ and $v$ instead of $f_{\delta,p}$, $u_{\delta,p} $ and $v_{\delta,p}$, respectively.

 Assertion \eqref{item:f11} follows easily from the fact that  for $0<\theta<2\pi$,  $Re\,\frac{1+e^{i\theta}}{1-e^{i\theta}}=0$.

Let us prove \eqref{item:f12}. Since $|1-e^{i\theta}|\approx |\theta|$, we have 
$$
\|f\|_{H^p}\approx 1+\left(\int_0^{\pi/2} \theta^{-\delta}d\theta\right)^{1/p}\approx \frac{1}{(1-\delta)^{1/p}}.
$$

Now we estimate  the norm of $f$ in $F^{p,2}_0$, that is the norm of $(1-|z|^2)(I+R)f(z)$ on $L^{p,2}$.  
In order to obtain this estimate we prove that for $\delta$ near to 1, 
then the functions  $g(z)=(1-|z|^2)Rf(z)$ and $h(z)=(1-|z|^2)f(z)$  satisfy
$$
\|g\|_{L^{p,2}}\approx\frac{1}{\sqrt{p}} \frac{1}{(1-\delta)^{1/p}}\quad\text{and}\quad 
\|h\|_{L^{p,2}}\lesssim 1,
$$
with constants which do not depends on $p$ and $\delta$.
Combining these results with    
$$
\|g\|_{L^{p,2}}-\|h\|_{L^{p,2}}\le \|f\|_{F^{p,2}_0}\le \|g\|_{L^{p,2}}+\|h\|_{L^{p,2}}
$$
we obtain \eqref{item:f12}.

Let us prove these norm estimates of the functions $g$ and $h$. 
\begin{align*}
\|g\|_{L^{p,2}}
&\approx \frac{\delta}p\left(\int_{-\pi}^{\pi}
\left(\int_0^1(1-r^2)\frac{|1+r e^{i\theta}|^{2\delta/p-2}}{|1-re^{i\theta}|^{2\delta/p+2}}r^2dr\right)^{p/2}d\theta\right)^{1/p}.
\end{align*}

It is easy to check from the equivalences    $|1-re^{i\theta}|^2=(1-r)^2+2r^2(1-\cos\theta)\approx (1-r)^2+\theta^2$ 
for $0\le\theta\le\pi/2$ and analogously for $\pi/2\le\theta\le\pi$, $|1+re^{i\theta}|^2\approx (1-r)^2+(\pi-\theta)^2$, that  
\begin{equation}\label{eqn:f1}\begin{split}
\|g\|_{L^{p,2}}&\approx 1+ \frac{1}{p}\left(\int_{0}^{1}\left(\int_0^1\frac{2t}{(t^2+s^2)^{\delta/p+1}}dt\right)^{p/2}ds \right)^{1/p}\\
&\approx 1+ \frac{1}{\sqrt{p}}\left(\int_{0}^{1}\left(\frac {1}{s^{2\delta/p}}-\frac {1}{(1+s^2)^{\delta/p}}\right)^{p/2}ds\right)^{1/p}.
\end{split}\end{equation}
Thus, for $\delta>\delta_p=1-p^{-p/2}$, 
\begin{align*}
\|g\|_{L^{p,2}}
&\lesssim 1+ \frac{1}{\sqrt{p}}\left(\int_{0}^{1}\frac {d s}{s^{\delta}}\right)^{1/p}\approx \frac{1}{\sqrt{p}} \frac{1}{(1-\delta)^{1/p}}.
\end{align*}

Conversely, 
since  $\sqrt{a-b}\ge \sqrt{a}-\sqrt{b}$ for any $0<b<a$, \eqref{eqn:f1} and the triangular inequality gives 
$$
\|g\|_{L^{p,2}}\gtrsim 1+ 
\frac{1}{\sqrt{p}}\left(\int_{0}^{1}\frac {ds}{s^{\delta}}\right)^{1/p}
-\frac{1}{\sqrt{p}}\left(\int_{0}^{1}\frac {ds}{(1+s)^{\delta}}\right)^{1/p} \approx \frac{1}{\sqrt{p}} \frac{1}{(1-\delta)^{1/p}}.
$$

The proof of $\|h\|_{L^{p,2}}\lesssim 1$ is easier.
For  $p\ge 2$ follows from the fact that $|h(z)|^2\lesssim 1-|z|^2$ and for $1<p<2$ from 
$|h(z)|^2\lesssim (1-|z|^2)^2/|1-z|^2$.

In order to prove assertion   \eqref{item:f13}, note that  if $1-\delta<p-1\le 1$ then 
$\tan\frac{\delta\pi}{2p}\approx \left(1-\frac{\delta}{p}\right)^{-1}\approx p'$. 
Hence, assertions \eqref{item:f12} and \eqref{item:f11} give
 $$
\|2\Ca(u)(z)\|_{H^p}=\|f(z)+\overline {f(0)}\|_{H^p}\approx \|u\|_{L^p}+\|v\|_{L^p}\approx p'\|u\|_{L^p}.
$$

Analogously, \eqref{item:f14} follows from the fact that for $p\ge 2$, $\tan\frac{\delta\pi}{2p}\approx \frac{1}{p}$, and 
$$
\|2\Ca(v)(z)\|_{H^p}=\|f(z)-\overline {f(0)}\|_{H^p}\approx \|u\|_{L^p}+\|v\|_{L^p}\approx p\|v\|_{L^p}.
$$
This concludes the proof.
\end{proof}

\subsection{Proof of Theorem \ref{cor:Bergman}}

\begin{proof}
By  Corollary \ref{cor:estimthm12} we have that 
$$
\|\Be:L^{p,2}\to H^{p}\|=\|\Ca:L^{p'}\to F^{p',2}_0\|\le c(n)  \max\{p, \sqrt{p'}\}.
$$

In order to prove that this estimate is sharp, we consider the case $n=1$. Assume  $1<p\le 3/2$.
Let $f=f_{\delta,p'}$ as in Lemma \ref{lem:f1} and $v=v_{\delta,p'}$ its imaginary part. 
Then,  we have
$$
\| \mathcal{C}(v)\|_{F^{p',2}_0}\approx  \frac{1}{\sqrt{p'}}\|f\|_{F^{p,2}_0}\approx \sqrt{p'} \|v\|_{L^{p'}}.
$$
Thus,  $\|\Ca:L^{p'}\to F^{p',2}_0\|\gtrsim C \sqrt{p'}$.

Now we consider the case $p>3/2$. Since, $1<p'<3$,  for $\varphi\in L^{p'}$ the norms of $\mathcal{C}(\varphi)$ on $H^{p'}$ 
and on $F^{p',2}_0$ are equivalent with constants that do not depend on $p$. 
Since, by \eqref{eqn:Cauchynorm}, the norm of  $\mathcal{C}:L^{p'}\to H^{p'}$ is equivalent to $p$, we conclude the proof.
\end{proof}

\subsection{Proof of the sharpness in Theorem \ref{thm:Bergman}}


\begin{proof}
We prove that there is not any $\lambda<1$ such that 
\begin{equation}\label{cor:sharp}
\|\Be :L^{p,2}(\omega)\to H^p(\omega)\|\leq C(p,n) [\omega]_{A_p}^{\lambda\max \{1,1/(2(p-1))\}}.
\end{equation}

Assume that \eqref{cor:sharp} is satisfied for some $p_0$ and some $\lambda<1$. 
Then, by Corollary \ref{cor:Du}, for $1<p<\infty$ we have 
$$
\|\Be :L^{p,2}\to H^p\|\leq C' \max\{p^\beta,(p')^{\beta(p_0-1)}\},\quad \beta=\lambda\max \{1,1/(2(p_0-1))\}.
$$
 
If $p_0> 3/2$, then $\beta=\lambda$ and thus $\|\Be :L^{p,2}\to H^p\|\leq C' p^\lambda$ for any $p>3/2$. 
This is not possible by Theorem \ref{cor:Bergman}.

If $p_0\le 3/2$, then $\beta(p_0-1)=\lambda/2$ and thus $\|\Be :L^{p,2}\to H^p\|\leq C' (p')^{\lambda/2}$ for any $1<p<3/2$. 
As above,  Theorem \ref{cor:Bergman} gives that this is not possible.
\end{proof}

\section{Proof of Theorems \ref{thm:Bergman2} and \ref{cor:Bergman3}} \label{sec:thmbergman2}

In Section \ref{sec:necessary} it is proved that if $\Be$ is  bounded from 
$L^{p,2}(\omega)$ to $F^{p,2}_0(\omega)$, then $\omega\in A_p$.

Conversely, if $\omega\in A_p$, $p>1$, then $H^p(\omega)$ and $F^{p,2}_0(\omega)$ are isomorphic. 
Hence,  Theorem \ref{thm:Bergman} 
ensures then that $\|\Be:L^{p,2}(\omega)\to F^{p,2}_0(\omega)\|$ is finite.

In order to obtain norm-estimates for this  operator, let
$$\Qo(\varphi)(z)=(1-|z|^2)\left(I+\frac{R}{n}\right)\Be(\varphi)(z).$$

If $\varphi$ and $\psi$ are smooth functions on $\B$, from 
$$\left(I+\frac{R}{n}\right)\Be(z,w)=\overline{\left(I+\frac{R}{n}\right)\Be(w,z)}$$
and Fubini's theorem, we have 
$$
\langle \Qo(\varphi),\psi\rangle_{\B}=\langle \varphi,\Qo\psi\rangle_{\B}, 
$$
where $\langle \cdot,\cdot \rangle_{\B}$ denotes the pairing given in Proposition \ref{prop:dualLpq}.
Thus, 
\begin{equation}\label{eqn:dualQ}\begin{split}
\|\Be:L^{p,2}(\omega)\to F^{p,2}_0(\omega)\|&=\|\Qo:L^{p,2}(\omega)\to L^{p,2}(\omega)\|\\
&=\|\Qo:L^{p',2}(\omega')\to L^{p',2}(\omega')\|\\
&=\|\Be:L^{p',2}(\omega')\to F^{p',2}_0(\omega')\|.
\end{split}\end{equation}

Consider the homogeneous space $(\B',d,\nu)$ where $\B'=\overline{\B}\setminus \{0\}$,
$d$ denotes the quasimetric
$$
d(z,w)=\max\{|\,|z|-|w|\,|,|1-z^*{\overline {w}}^*|\}, \qquad z^*=z/|z|, \quad w^*=w/|w|,
$$
and $\nu$ is the volume measure on $\B'$.
Denote by $\Delta(z,r)$ the balls with respect to the metric $d$. 
Observe that  if $\z\in \bB$, then  the ball $\Delta(\z,r)$ 
coincides with the square $S_{r\z}$ introduced in Proposition \ref{prop:neccond}.

A weight $\Omega\in L^1(\B')$ is in the Muckenhoupt class $A_2(\B')$ with respect to the homogeneous space $\B'$ if  
$$
[\Omega]_{A^2(\B')}=\sup_{\Delta}\frac{1}{\nu(\Delta)^2}\int_\Delta \Omega d\nu \,\int_\Delta \Omega^{-1}d\nu<\infty.
$$

\begin{lem}\label{lem:OmA2}
If $\omega\in A_2(\bB)$, then the weight $\Omega(z)=\omega(z/|z|)$, $z\ne 0$, is in $A_2(\B')$ and 
$[\Omega]_{A^2(\B')}\lesssim [\omega]_{A_2}$.
\end{lem}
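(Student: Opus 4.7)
The plan is to exploit the product structure of the balls $\Delta(z,r)$ on $(\B',d,\nu)$ together with the fact that $\Omega$ depends only on the angular variable $z/|z|$. This should reduce the $A_2(\B')$ testing quantity for any ball $\Delta$ to the analogous testing quantity for a single nonisotropic ball on $\bB$, and hence to $[\omega]_{A_2}$.

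Concretely, fix $z_0 \in \B'$ and $r>0$, and let $\z_0=z_0^*$. From the definition of $d$,
\[
\Delta(z_0,r) = I_{|z_0|,r}\times B(\z_0,r),
\]
identified via polar coordinates $w=s\eta$, where $I_{|z_0|,r}=\{s\in(0,1]:||z_0|-s|<r\}$ and $B(\z_0,r)$ is the nonisotropic ball on $\bB$ appearing in the definition of $A_p(\bB)$. Writing $d\nu(w)=2n\, s^{2n-1}ds\, d\sigma(\eta)$ and using $\Omega(s\eta)=\omega(\eta)$, Fubini gives the factorizations
\[
\nu(\Delta(z_0,r))=\Big(\int_{I_{|z_0|,r}}2n s^{2n-1}ds\Big)\,\sigma(B(\z_0,r)),
\]
\[
\int_{\Delta(z_0,r)}\Omega\,d\nu=\Big(\int_{I_{|z_0|,r}}2n s^{2n-1}ds\Big)\int_{B(\z_0,r)}\omega\,d\sigma,
\]
and the analogous identity with $\Omega^{-1}$ and $\omega^{-1}$.

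Substituting these three identities into the definition of $[\Omega]_{A^2(\B')}$, the radial factors cancel exactly, and one obtains
\[
\frac{1}{\nu(\Delta(z_0,r))^2}\int_{\Delta(z_0,r)}\Omega\,d\nu\int_{\Delta(z_0,r)}\Omega^{-1}d\nu
=\frac{\omega(B(\z_0,r))\,\omega^{-1}(B(\z_0,r))}{\sigma(B(\z_0,r))^2}\le [\omega]_{A_2}.
\]
Taking the supremum over $(z_0,r)$ yields $[\Omega]_{A^2(\B')}\le[\omega]_{A_2}$, which proves the lemma (with constant $1$, so the $\lesssim$ in the statement is comfortably satisfied).

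I do not expect any real obstacle here: the argument is essentially bookkeeping once the product structure of $\Delta(z_0,r)$ is observed. The only mild point to verify is that the correspondence $\Delta\leftrightarrow I\times B$ identifies the balls on $(\B',d)$ with honest products (so that Fubini applies cleanly and the radial factor cancels), which is immediate from the definition of $d$. No additional doubling or geometric estimates on $\bB$ are required.
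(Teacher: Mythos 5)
Your proof is correct and takes essentially the same approach as the paper: integrate in polar coordinates, use that $\Omega$ depends only on the angular variable $z/|z|$, and observe that the radial factor is common to the three integrals and drops out. Your bookkeeping is in fact slightly tighter — you note that $\Delta(z_0,r)$ is an exact product $I_{|z_0|,r}\times B(\z_0,r)$, so the radial integral $\int_{I_{|z_0|,r}}2n\,s^{2n-1}\,ds$ cancels identically and one gets $[\Omega]_{A_2(\B')}\le[\omega]_{A_2}$ with constant $1$ — whereas the paper bounds that radial integral by $\lesssim r$ and invokes the doubling estimates $\nu(\Delta(a,r))\approx r^{n+1}$, $\sigma(B)\approx r^n$, which are only uniform for $|a|$ bounded away from the origin; your version avoids that issue entirely.
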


\begin{proof}
By integration in polar coordinates 
\begin{align*}
\int_{\Delta(a,r)} \Omega(z) d\nu(z)
&\lesssim r \int_{\{\z\in\bB:\,d(a^*,\z)<r\}}\omega(\z)d\sigma(\z).
\end{align*}
Analogously
\begin{align*}
\int_{\Delta(a,r)} \Omega^{-1}(z) d\nu(z)
&\lesssim r \int_{\{\z\in\bB:\,d(a^*,\z)<r\}}\omega^{-1}(\z)d\sigma(\z).
\end{align*}
Since, $\sigma\{\z\in\bB:\,d(a^*,\z)<r\}\approx r^n$ and $\nu(\Delta(a,r))\approx r^{n+1}$, 
we obtain $[\Omega]_{A^2(\B)}\lesssim [\omega]_{A_2}$,
which concludes the proof.
\end{proof}

\begin{cor}\label{cor:CZ}
If $\omega\in A_2$, then 
$$
\|\Be:L^{2,2}(\omega)\to F^{2,2}_0(\omega)\|\le C [\omega]_{A_2}.
$$ 
\end{cor}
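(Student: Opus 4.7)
The plan is to reduce Corollary \ref{cor:CZ} to the sharp $A_2$ theorem for Calder\'on--Zygmund operators on the homogeneous space $(\B',d,\nu)$ established in \cite{An-Va}. By the duality identity \eqref{eqn:dualQ}, it is enough to bound $\|\Qo\|_{L^{2,2}(\omega)}$. Integrating in polar coordinates and writing $\Omega(z)=\omega(z/|z|)$, the substitution $\tilde\varphi(z)=(1-|z|^2)^{-1/2}\varphi(z)$ gives the identification $\|\varphi\|_{L^{2,2}(\omega)}\approx\|\tilde\varphi\|_{L^2(\Omega\,d\nu)}$, under which $\Qo$ is conjugate to the integral operator $\widetilde\Qo$ on $\B'$ with kernel
$$\widetilde K(z,w)=\sqrt{(1-|z|^2)(1-|w|^2)}\,\left(I+\frac{R_z}{n}\right)\Be(z,w).$$
It therefore suffices to prove $\|\widetilde\Qo\|_{L^2(\Omega\,d\nu)}\lesssim[\Omega]_{A_2(\B')}$, for Lemma \ref{lem:OmA2} then yields the bound by $[\omega]_{A_2}$.

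The heart of the argument is to verify that $\widetilde\Qo$ is a Calder\'on--Zygmund operator on $(\B',d,\nu)$, noting that $\nu(\Delta(z,r))\approx r^{n+1}$ for $r\lesssim 1$. For the size bound, the arithmetic--geometric mean inequality $\sqrt{(1-|z|^2)(1-|w|^2)}\lesssim|1-z\bar w|$, combined with $|1-z\bar w|\gtrsim d(z,w)$, yields
$$|\widetilde K(z,w)|\lesssim|1-z\bar w|^{-(n+1)}\lesssim d(z,w)^{-(n+1)}\approx\nu(\Delta(z,d(z,w)))^{-1}.$$
For the H\"older smoothness, one extends Lemma \ref{eqn:3.1} to points $z,z'\in\B$ (not only on $\bB$), and applies it to the explicit expression $(I+R_z/n)\Be(z,w)=(1-z\bar w)^{-(n+1)}+\frac{n+1}{n}z\bar w(1-z\bar w)^{-(n+2)}$ (together with a differentiation in $\bar w$ for the smoothness in the $w$-variable) to obtain, for $d(z,z')\le d(z,w)/(2A_0)$,
$$|\widetilde K(z,w)-\widetilde K(z',w)|\lesssim\left(\frac{d(z,z')}{d(z,w)}\right)^{1/2}\nu(\Delta(z,d(z,w)))^{-1},$$
and symmetrically in $w$. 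The unweighted $L^2(\nu)$-boundedness of $\widetilde\Qo$ follows from Schur's test with the weight $s(z)=(1-|z|^2)^{-1/4}$, using the classical estimate $\int_\B(1-|w|^2)^{1/4}|1-z\bar w|^{-(n+2)}\,d\nu(w)\lesssim(1-|z|^2)^{-3/4}$.

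Once $\widetilde\Qo$ is identified as an $L^2(\nu)$-bounded Calder\'on--Zygmund operator on $(\B',d,\nu)$, the $A_2$ theorem of \cite{An-Va} gives $\|\widetilde\Qo\|_{L^2(\Omega\,d\nu)}\lesssim[\Omega]_{A_2(\B')}$, and Lemma \ref{lem:OmA2} completes the proof. The main obstacle is verifying the H\"older smoothness of $\widetilde K$ with the correct gain expressed through the quasimetric $d$. While $|1-z\bar w|\gtrsim d(z,w)$ forces the size bound almost trivially, the quasimetric $d$ can be strictly smaller than $|1-z\bar w|$ (for instance when $z^*=w^*$ but $|z|,|w|$ are of moderate size), so the naive gain $\bigl(|1-z\bar{z'}|/|1-z\bar w|\bigr)^{1/2}$ from Bergman-kernel estimates must be carefully refined into the correct form involving $d(z,z')$ and $d(z,w)$.
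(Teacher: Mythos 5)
Your argument is exactly the paper's: the paper's proof of Corollary \ref{cor:CZ} likewise reduces $\Qo$ on $L^{2,2}(\omega)=L^2\bigl(\B',\frac{\Omega}{1-|z|^2}\,d\nu\bigr)$ to the conjugated operator $T$ with kernel $\sqrt{(1-|z|^2)(1-|w|^2)}\,(I+R/n)\Be(z,w)$ on $L^2(\B',\Omega\,d\nu)$, asserts that $T$ is a Calder\'on--Zygmund operator on the homogeneous space $(\B',d,\nu)$, and then applies the $A_2$ theorem of \cite{An-Va} together with Lemma \ref{lem:OmA2}. You flesh out the verification of the Calder\'on--Zygmund conditions in more detail than the paper (which simply asserts them), and your caveat about the H\"older smoothness in the quasimetric $d$ --- that $d(z,w)$ can be much smaller than $|1-z\overline{w}|$, so the standard Bergman-kernel gain must be translated carefully --- is a legitimate subtlety that the paper's terse proof passes over.
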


\begin{proof}
In \cite{An-Va}, it was proved that if $T$ is Calderon-Zygmund operator on a homogeneous space $X$, 
then for any $\Omega\in A_2(X)$ we have $\|T\|_{L^2(X,\Omega)}\le C(T,X)[\Omega]_{A_2(X)}$.

Observe that $ L^{2,2}(\omega)= L^2\left(\B',\frac{\Omega(z)}{1-|z|^2}d\nu(z)\right)$. 
Thus, the boundednes of  the operator $\Qo$ on $L^{2,2}(\omega)$ is equivalent to the boundedness 
of the Calderon-Zygmund operator 
$T:L^2(\B,\Omega)\to L^2(\B,\Omega)$ defined by 
$$
T(\varphi)(z)=\int_\B\varphi(w)(1-|w|^2)^{1/2}(1-|z|^2)^{1/2}\left(I+\frac{R}{n}\right)\frac{1}{(1-z\overline w)^{n+1}}d\nu(w).
$$
Applying the above mentioned result to $T$ and $X=\B'$, 
and using Lemma \ref{lem:OmA2} we obtain  the estimate.
\end{proof}

Using this estimate and  the extrapolation  Theorem \ref{thm:duoandikoetxea} we obtain:

\begin{thm}\label{thm:normBLtoF}
Let $1<p<\infty$. There exists a positive constant $C(p,n)$ such that  
$$
\|\Be:L^{p,2}(\omega)\to F^{p,2}_0(\omega)\|
\le C(p,n) [\omega]_{A_p}^{\max\{1,1/(p-1)\}}
=C(p,n)\max\{[\omega]_{A_p},[\omega']_{A_{p'}}\}
$$
for any $\omega\in A_p$.
\end{thm}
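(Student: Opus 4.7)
The plan is to combine the sharp $p=2$ estimate of Corollary \ref{cor:CZ} with the extrapolation Theorem \ref{thm:duoandikoetxea}, since the exponent $\max\{1,1/(p-1)\}$ is exactly what one expects from extrapolating a linear-in-$[\omega]_{A_2}$ bound from $p_0=2$. By \eqref{eqn:dualQ}, it suffices to estimate $\|\Qo:L^{p,2}(\omega)\to L^{p,2}(\omega)\|$.

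To fit Theorem \ref{thm:duoandikoetxea}, which is stated for pairs of functions on $\bB$, I first perform the inner radial $L^2$ integration. For $\varphi\in L^{p,2}(\omega)$, define nonnegative functions on $\bB$ by
$$
\Phi(\z)=\left(\int_0^1|\varphi(r\z)|^2\frac{2nr^{2n-1}}{1-r^2}dr\right)^{1/2},\qquad \Psi(\z)=\left(\int_0^1|\Qo(\varphi)(r\z)|^2\frac{2nr^{2n-1}}{1-r^2}dr\right)^{1/2},
$$
so that $\|\varphi\|_{L^{p,2}(\omega)}=\|\Phi\|_{L^p(\omega)}$ and $\|\Qo(\varphi)\|_{L^{p,2}(\omega)}=\|\Psi\|_{L^p(\omega)}$. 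Corollary \ref{cor:CZ} then asserts that for every $\omega\in A_2$ and every such pair,
$$
\left(\int_\bB \Psi^2\omega\,d\sigma\right)^{1/2}\le C\,[\omega]_{A_2}\left(\int_\bB\Phi^2\omega\,d\sigma\right)^{1/2}.
$$

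Next I apply Theorem \ref{thm:duoandikoetxea} with $p_0=2$ and $N(t)=Ct$. The conclusion says that for every $1<p<\infty$ and $\omega\in A_p$,
$$
\left(\int_\bB\Psi^p\omega\,d\sigma\right)^{1/p}\le C_1 N\!\left(C_2[\omega]_{A_p}^{\max\{1,1/(p-1)\}}\right)\left(\int_\bB\Phi^p\omega\,d\sigma\right)^{1/p}\lesssim [\omega]_{A_p}^{\max\{1,1/(p-1)\}}\|\Phi\|_{L^p(\omega)}.
$$
Translating back gives $\|\Qo(\varphi)\|_{L^{p,2}(\omega)}\lesssim [\omega]_{A_p}^{\max\{1,1/(p-1)\}}\|\varphi\|_{L^{p,2}(\omega)}$, and the alternative form $\max\{[\omega]_{A_p},[\omega']_{A_{p'}}\}$ is immediate from the identity $[\omega']_{A_{p'}}=[\omega]_{A_p}^{1/(p-1)}$ recorded in the preliminaries.

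There is no substantial obstacle here: the linearity of $N$ at $p_0=2$ automatically produces the correct exponent in the two cases $p\ge 2$ and $1<p<2$, and the only point requiring care is the trivial one of packaging the mixed-norm estimate as a weighted $L^p$ estimate on $\bB$ for a suitable pair $(\Phi,\Psi)$ so that the extrapolation machinery of \cite{Du} applies directly.
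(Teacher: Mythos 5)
Your proof is correct and coincides with the paper's own argument: the paper likewise combines Corollary \ref{cor:CZ} (the sharp $[\omega]_{A_2}$ bound for $\Qo$ on $L^{2,2}(\omega)$) with the extrapolation Theorem \ref{thm:duoandikoetxea} applied at $p_0=2$ to the pair of nonnegative $\bB$-functions obtained by integrating out the radial variable. Your packaging of the mixed-norm estimate as a weighted $L^p(\bB)$ inequality for $(\Phi,\Psi)$ and the use of $N(t)=Ct$ to produce the exponent $\max\{1,1/(p-1)\}$ are exactly the intended steps.
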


\begin{rem}\label{rem:equivnormsB}
Note that the same arguments used to prove 
$$
\|\Qo:L^{p,2}(\omega)\to L^{p,2}(\omega)\|\le C(p,n) [\omega]_{A_p}^{\max\{1,1/(p-1)\}}
$$
show that for any real numbers $\alpha$ and $\beta$,
$$
\|(1-|z|^2)\left(\alpha I+\beta R\right)\Be:L^{p,2}(\omega)\to L^{p,2}(\omega)\|\le C(p,n,\alpha,\beta) [\omega]_{A_p}^{\max\{1,1/(p-1)\}}.
$$
That is, if in the space $F^{p,2}_0(\omega)$ we consider the norm 
$$
\|f\|_{F^{p,2}_0(\omega)}=\|(1-|z|^2)(\alpha I+\beta R) f\|_{L^{p,2}(\omega)},
$$
 with $\alpha>0$ and $\beta>0$, we also obtain the same estimate  
$$
\|\Be:L^{p,2}(\omega)\to F^{p,2}_0(\omega)\|\lesssim [\omega]_{A_p}^{\max\{1,1/(p-1)\}}.
$$
\end{rem}

{\bf Proof of Theorem \ref{thm:Bergman2}}

\begin{proof}
In Section \ref{sec:necessary} it is proved that if $\Be$ is  bounded from 
$L^{p,2}(\omega)$ to $F^{p,2}_0(\omega)$, then $\omega\in A_p$.

The estimate 
$$
\|\Be:L^{p,2}(\omega)\to F^{p,2}_0(\omega) \|\lesssim [\omega]_{A_p}^{\max\{1,1/(p-1)\}}
$$
follows from  Theorem \ref{thm:normBLtoF}.  
\end{proof}

The following couple of lemmas show that it can not be obtained as an upper bound of the norm of $\Be$ in terms of $[\omega]_{A_p}^\lambda$ with $\lambda<1/p$.

\begin{lem}\label{lem:omf}
For  $1<p<\infty$, $0<\rho<1/2$  and $0<\delta<1$, let 
$\omega_\delta(e^{i\theta})=|1-e^{i\theta}|^{(p-1)(1-\delta)}$ and
$\varphi_{\delta}(re^{i\theta})=|1-e^{i\theta}|^{\delta-1}(1-r)\Chi_{\rho,1}$, 
where $\Chi_{\rho,e^{i\eta}}$ denotes the characteristic function of the square
$S_{\rho,e^{i\eta}}=\{z=r e^{i\theta}\in\D: 1-r<\rho, |1-e^{i(\theta-\eta)}|<\rho\}$.

We then have:
\begin{enumerate}
	\item \label{item:omf1} $[\omega_\delta]_{A_p}\approx \delta^{1-p}$.
	\item \label{item:omf2} $\|\varphi_\delta\|_{L^{p,2}(\omega_\delta)}\approx \delta^{-1/p}$.
	\item \label{item:omf3}$\|\varphi_\delta\|_{L^{1}}\approx \delta^{-1}$.
	\item \label{item:omf4}$\|(1-|z|^2)\Chi_{\rho,-1}(z)\|_{L^{p,2}(\omega_\delta)}\approx C$.
\end{enumerate}
\end{lem}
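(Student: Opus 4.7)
The plan is to verify each of the four estimates by direct integration in polar coordinates, exploiting the standard equivalence $|1-e^{i\theta}|\approx|\theta|$ for small $|\theta|$ together with the explicit form of the dual weight $\omega_\delta'(\zeta)=\omega_\delta(\zeta)^{-1/(p-1)}=|1-e^{i\theta}|^{-(1-\delta)}$.

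For item \eqref{item:omf1}, I would first evaluate the $A_p$ quotient on the arcs $B_r=B(1,r)$, where the weight is most singular. A direct computation gives $\omega_\delta(B_r)\approx r^{(p-1)(1-\delta)+1}$ and $\omega_\delta'(B_r)\approx r^{\delta}/\delta$; since the exponents of $r$ satisfy $(p-1)(1-\delta)+1+\delta(p-1)=p$, they cancel against $|B_r|^p\approx r^p$, leaving the ratio $\approx \delta^{1-p}$. To see that this arc achieves the supremum up to constants, I would split an arbitrary ball $B(e^{i\theta_0},r)$ into two cases: if the ball is far from $1$ (say $|1-e^{i\theta_0}|\geq 10r$), then $\omega_\delta$ is essentially constant on it and the $A_p$ quotient is $O(1)$; otherwise the ball is comparable to an arc centered at $1$ with comparable radius, reducing to the previous case.

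For item \eqref{item:omf2}, the inner $r$-integral evaluates, for $\theta$ with $|1-e^{i\theta}|<\rho$, to approximately $\rho^2|1-e^{i\theta}|^{2(\delta-1)}$. Raising to power $p/2$ and multiplying by $\omega_\delta$ yields $\rho^p|1-e^{i\theta}|^{p(\delta-1)+(p-1)(1-\delta)}=\rho^p|1-e^{i\theta}|^{-(1-\delta)}$, and the integral over $|1-e^{i\theta}|<\rho$ then equals $\approx \rho^{p+\delta}/\delta\approx 1/\delta$; taking a $p$-th root gives the claimed $\delta^{-1/p}$. Item \eqref{item:omf3} reduces, after separating variables in polar coordinates, to $\int_{|1-e^{i\theta}|<\rho}|1-e^{i\theta}|^{\delta-1}d\theta\cdot\int_{1-\rho}^1(1-r)\,r\,dr\approx (\rho^\delta/\delta)\cdot\rho^2\approx 1/\delta$. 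Item \eqref{item:omf4} is simpler still: on the support of $\chi_{\rho,-1}$ one has $|1-e^{i\theta}|\approx 2$, so $\omega_\delta(e^{i\theta})\in[1,2^{p-1}]$ uniformly in $\delta\in(0,1)$, and the remaining computation is a standard integral that is manifestly independent of $\delta$.

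The only real technical point is the exponent bookkeeping in \eqref{item:omf1} — both checking the precise cancellation that produces exactly $\delta^{1-p}$ and confirming that arcs away from the singularity at $1$ cannot improve the estimate. The remaining three items are routine polar-coordinate computations with no subtle cancellations, and I would simply record them carefully in the order above.
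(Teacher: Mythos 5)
Your proposal is correct and follows essentially the same route as the paper: direct polar-coordinate computations using $|1-e^{i\theta}|\approx|\theta|$ and the explicit form of the dual weight $\omega_\delta'=|1-e^{i\theta}|^{-(1-\delta)}$, with the inner $r$-integral producing the $\rho^2$ factor and the singular $\theta$-integrals producing the $1/\delta$. The only difference is cosmetic: you spell out the two-case split (arcs near vs.\ far from $1$) for the $A_p$ constant and display the exponent cancellation explicitly, whereas the paper records the same integrals more tersely and leaves those verifications to the reader.
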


\begin{proof}
From  $\omega_\delta(e^{i\theta})\approx |\theta|^{(p-1)(1-\delta) }$ 
it is easy to check that 
$$
[\omega_\delta]_{A_p}\approx  \delta^{1-p}
$$

The remainder estimates follows from
\begin{align*}
&\|\varphi_\delta\|_{L^{p,2}(\omega_\delta)}\approx \left(\int_{-\rho}^{\rho}|\theta|^{\delta-1}d\theta\right)^{1/p}
\left(\int_{1-\rho}^1(1-r)dr\right)^{1/2}\approx \delta^{-1/p}.\\
&\|\varphi_\delta\|_{L^{1}} \approx \int_{-\rho}^{\rho}|\theta|^{\delta-1}d\theta \int_{1-\rho}^1(1-r)dr\approx \delta^{-1}.\\
&\|(1-|z|^2)\Chi_{\rho,-1}(z)\|_{L^{p,2}(\omega_\delta)}
\approx C.
\end{align*}
The constants in the last equivalences depends of $\rho$.
\end{proof}

\begin{lem} \label{lem:sharpBergman2}
Let  $\varphi_\delta$ and $\omega_\delta$ be as in Lemma \ref{lem:omf}.
Then 
$$
\|\Be:L^{p,2}(\omega_\delta)\to F^{p,2}_0(\omega_\delta) \|\gtrsim [\omega_\delta]_{A_p}^{1/p}.
$$
\end{lem}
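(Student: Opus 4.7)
The plan is to bound $\|\Qo\varphi_\delta\|_{L^{p,2}(\omega_\delta)}=\|\Be(\varphi_\delta)\|_{F^{p,2}_0(\omega_\delta)}$ from below by duality, using the real-valued test function $\psi(z)=(1-|z|^2)\Chi_{\rho,-1}(z)$ that already appears in item \eqref{item:omf4} of Lemma \ref{lem:omf}. A computation entirely analogous to that item, but with $\omega_\delta$ replaced by the dual weight $\omega_\delta'(e^{i\phi})=|1-e^{i\phi}|^{-(1-\delta)}$, gives $\|\psi\|_{L^{p',2}(\omega_\delta')}\le C(\rho,p)$ independent of $\delta$, since on the arc $\{e^{i\phi}:|1+e^{i\phi}|<\rho\}$ one has $|1-e^{i\phi}|\approx 2$ and hence $\omega_\delta'$ is bounded above and below by positive constants. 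Proposition \ref{prop:dualLpq} then yields
\begin{equation*}
\|\Qo\varphi_\delta\|_{L^{p,2}(\omega_\delta)}\;\ge\;\frac{|\langle\Qo\varphi_\delta,\psi\rangle_\B|}{\|\psi\|_{L^{p',2}(\omega_\delta')}}.
\end{equation*}

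Because $\psi$ is real, $\langle\Qo\varphi_\delta,\psi\rangle_\B=\int_{S_{\rho,-1}}\Qo\varphi_\delta(z)\,d\nu(z)$; plugging in the definition of $\Qo$ and using Fubini,
\begin{equation*}
\langle\Qo\varphi_\delta,\psi\rangle_\B\;=\;\int_{S_{\rho,1}}\varphi_\delta(w)K(w)\,d\nu(w),\qquad K(w)\;=\;\int_{S_{\rho,-1}}\frac{(1-|z|^2)(1+z\bar w)}{(1-z\bar w)^3}\,d\nu(z).
\end{equation*}
Substituting $z\mapsto -z$ converts $K(w)$ into an integral over $S_{\rho,1}$ with kernel $(1-|z|^2)(1-z\bar w)/(1+z\bar w)^3$. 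For $z,w\in S_{\rho,1}$, $|1+z\bar w|\approx 2$, so the denominator is harmless. At $w=1$, the integrand restricted to the real axis is positive, and the angular symmetry $\theta\to-\theta$ kills the imaginary part; moreover, the angular derivative of the kernel at $\theta=0$ is purely imaginary, so there is no linear term in the real part, and one obtains $\Re K(1)\gtrsim C(\rho)>0$. By continuity of $w\mapsto K(w)$, fixing $\rho>0$ small once and for all gives $\Re K(w)\gtrsim C(\rho)$ uniformly for $w\in S_{\rho,1}$.

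Since $\varphi_\delta\ge 0$, item \eqref{item:omf3} of Lemma \ref{lem:omf} then yields
\begin{equation*}
|\langle\Qo\varphi_\delta,\psi\rangle_\B|\;\ge\;\Re\langle\Qo\varphi_\delta,\psi\rangle_\B\;\gtrsim\;C(\rho)\,\|\varphi_\delta\|_{L^1}\;\approx\;\delta^{-1},
\end{equation*}
hence $\|\Qo\varphi_\delta\|_{L^{p,2}(\omega_\delta)}\gtrsim\delta^{-1}$. Items \eqref{item:omf1} and \eqref{item:omf2} of Lemma \ref{lem:omf} give $\|\varphi_\delta\|_{L^{p,2}(\omega_\delta)}\approx\delta^{-1/p}$ and $[\omega_\delta]_{A_p}^{1/p}\approx\delta^{(1-p)/p}=\delta^{-1/p'}$, so
\begin{equation*}
\|\Be:L^{p,2}(\omega_\delta)\to F^{p,2}_0(\omega_\delta)\|\;\ge\;\frac{\|\Qo\varphi_\delta\|_{L^{p,2}(\omega_\delta)}}{\|\varphi_\delta\|_{L^{p,2}(\omega_\delta)}}\;\gtrsim\;\delta^{-1+1/p}\;=\;\delta^{-1/p'}\;\approx\;[\omega_\delta]_{A_p}^{1/p},
\end{equation*}
which is the claimed lower bound.

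The main technical obstacle is the uniform estimate $\Re K(w)\gtrsim C(\rho)$ on $S_{\rho,1}$: the kernel $(1+z\bar w)/(1-z\bar w)^3$ is complex-valued with no a priori sign, so one must combine the angular symmetry available at $w=1$ with a continuity/perturbation argument to keep $\Re K$ strictly positive throughout the fixed region $S_{\rho,1}$. Once this pointwise lower bound is in hand, the remaining steps reduce to bookkeeping from the estimates already established in Lemma \ref{lem:omf}.
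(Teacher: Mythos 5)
Your route via duality is a genuinely different strategy from the one-line argument in the paper, and it is actually better suited to the problem: the paper's chain of inequalities implicitly asserts a pointwise lower bound for $\Qo(\varphi_\delta)(z)$ on $S_{\rho,-1}$, but the kernel $(I+R/n)\Be(z,w)=\frac{1+z\overline w}{(1-z\overline w)^{n+2}}$ has $|1+z\overline w|\lesssim\rho$ when $z\in S_{\rho,-1}$, $w\in S_{\rho,1}$, so the perturbation estimate of Lemma~\ref{eqn:3.1} no longer dominates the main term as it does in Proposition~\ref{prop:neccond}; the real part of that kernel actually changes sign over $S_{\rho,-1}\times S_{\rho,1}$ (it is $\approx\frac{1}{8}\bigl[(1-|z||w|)-(\arg z-\arg w+\pi)^2\bigr]$ to leading order and so turns negative when the angular offset dominates the radial depth). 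Passing to the pairing $\langle\Qo\varphi_\delta,\psi\rangle_\B$ only requires an \emph{averaged} positivity, which is genuinely weaker, so your set-up is sound and arguably cleaner than the paper's.

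However, the one step that carries the whole proof --- the uniform lower bound $\Re K(w)\gtrsim C(\rho)>0$ for all $w\in S_{\rho,1}$ --- is not actually established by the argument you give. Two separate gaps: (i) the ``by continuity'' step does not close, because $\Re K(1)$ and the oscillation of $\Re K$ across $S_{\rho,1}$ are of the \emph{same} order $\rho^{n+3}$ (for $n=1$, both $\approx\rho^4$); continuity only gives positivity in a $w$--neighbourhood of $1$ whose size is not controlled relative to $\rho$, and one cannot shrink $\rho$ afterwards because $K$ itself depends on $\rho$ through the domain of integration. (ii) Even $\Re K(1)>0$ is not automatic from ``the angular derivative is purely imaginary'': the \emph{second} angular derivative of the real part at the centre is strictly negative ($\Re\partial_\theta^2$ of the kernel $\approx-1/4$ for $|z|$ near $1$), so the real part of the integrand is $\approx\frac{(1-|z||w|)-\theta^2}{8}$ and is negative on a sizable portion of $S_{\rho,1}$. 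The positivity of the integral is a delicate cancellation that depends crucially on the square shape of $S_{\rho,1}$: writing $\sigma_1=1-|z|$, $\sigma_2=1-|w|$, $\theta=\arg z$, $\phi=\arg w$, the Taylor computation gives
\begin{equation*}
\Re K(w)\;\approx\;\frac{\rho^3}{4\pi}\Bigl[\tfrac{2\rho}{3}+\sigma_2-\tfrac{\rho^2}{3}-\phi^2\Bigr]\;\geq\;\frac{\rho^4}{6\pi}\,(1-2\rho),
\end{equation*}
which is positive only because $\rho<1/2$ in Lemma~\ref{lem:omf}, and would fail if the squares were elongated. So the conclusion is correct, but you must carry out this expansion (or an explicit residue/contour computation of $K$) rather than appeal to continuity; otherwise the key inequality is unproved. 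The remaining bookkeeping --- $\|\psi\|_{L^{p',2}(\omega_\delta')}\approx 1$ because $\omega_\delta'$ is comparable to a constant on $B_{-1,\rho}$, and the combination with items \eqref{item:omf1}--\eqref{item:omf3} of Lemma~\ref{lem:omf} --- is fine.
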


\begin{proof}
For any $z\in S_{\rho,-1}$ and any $w\in S_{\rho}$, $|1-z\overline w|>1/2$ and consequently 
\begin{align*}
\|\Be(\varphi_\delta)\|_{L^{p,2}(\omega_\delta)}
\gtrsim  \|\Be(\varphi_\delta)\Chi_{S_{\rho,-1}}\|_{L^{p,2}(\omega_\delta)}
\gtrsim \|(1-|z|^2)\Chi_{-\rho}(z)\|_{L^{p,2}(\omega_\delta)}\|\varphi_\delta\|_{L^1(d\nu)}.
\end{align*}
By Lemma \ref{lem:omf}, the last term is equivalent to 
$$
\delta^{-1}\approx \|\varphi_\delta\|_{L^{p,2}(\omega_\delta)}[\omega_\delta]_{A_p}^{1/p}.
$$
Hence, $\|\Be:L^{p,2}(\omega_\delta)\to L^{p,2}(\omega_\delta) \|\gtrsim [\omega_\delta]_{A_p}^{1/p}$
 which concludes the proof.
\end{proof}

\subsection{Proof of Theorem \ref{cor:Bergman3}}

\begin{proof}
As we have already said in the introduction, for  $p>0$  the norms on the spaces $H^p$ and  
$F^{p,2}_0$ are equivalent. From this fact it is easy to check that for $1\le p\le 2$ this equivalence 
can be done by constants which do not depend   on $p$.  
Thus, by Theorem \ref{cor:Bergman} we have 
$$
\|\Be:L^{p,2}\to F^{p,2}_0\|\approx \|\Be:L^{p,2}\to H^p\|
\lesssim \sqrt{p'},
$$
and this estimate is sharp.

The case $p>2$ follows from   \eqref{eqn:dualQ} and the above result.
\end{proof}

\section{Proof of Theorem \ref{thm:Bergman3}}\label{sec:thmbergman3}

In order to prove the estimate in Theorem \ref{thm:Bergman3}, we need the following:

\begin{prop}\label{prop:BtoFA1}
For $2< p<\infty$ and $\omega\in A_{p/2}$,  
$$
\|\Qo:L^{p,2}(\omega)\to L^{p,2}(\omega)\|\le C(p,n) [\omega]_{A_{p/2}}^{1/2},
$$
where,  as in the above section, $\Qo=(1-|z|^2)\left(I+\frac{R}{n}\right)\Be$.
\end{prop}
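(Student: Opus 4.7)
My plan is to combine the sharp $L^{2}$-Calder\'on--Zygmund bound of Corollary~\ref{cor:CZ} on the homogeneous space $\B'$ with a duality/extrapolation argument, exploiting the fact that the outer $L^{p,2}(\omega)$-norm squared is an $L^{p/2}(\omega)$-norm of a square-function-type quantity.

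Using $\|\Qo\varphi\|_{L^{p,2}(\omega)}^{2}=\|G(\Qo\varphi)^{2}\|_{L^{p/2}(\omega)}$ with $G(\psi)(\z)^{2}=\int_{0}^{1}|\psi(r\z)|^{2}\tfrac{2nr^{2n-1}\,dr}{1-r^{2}}$, squaring the claim converts it into
$$
\|G(\Qo\varphi)^{2}\|_{L^{q}(\omega)}\le C(p,n)\,[\omega]_{A_{q}}\,\|G(\varphi)^{2}\|_{L^{q}(\omega)},\qquad q:=p/2>1,
$$
for all $\omega\in A_{q}$. I would regard $(G(\varphi)^{2},G(\Qo\varphi)^{2})$ as a compatible pair and invoke the extrapolation Theorem~\ref{thm:duoandikoetxea} with $p_{0}=1$ and $N(t)=Ct$: this reduces the task to the base case $q=1$, namely
$$
\|\Qo\|_{L^{2,2}(w)}\le C\,[w]_{A_{1}}^{1/2}\qquad\text{for all }w\in A_{1},
$$
since taking a square root of the extrapolated inequality delivers precisely the claimed $[\omega]_{A_{p/2}}^{1/2}$-bound on the full range $p>2$. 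An alternative route is to dualize the outer $L^{q}(\omega)$-norm against $L^{q'}(\omega)$ with $q'=p/(p-2)$ and, using polar integration, to rewrite
$$
\|\Qo\varphi\|_{L^{p,2}(\omega)}^{2}=\sup_{\substack{h\ge 0\\\|h\|_{L^{q'}(\omega)}=1}}\int_{\B}|\Qo\varphi(z)|^{2}\,\frac{h(z/|z|)\,\omega(z/|z|)}{1-|z|^{2}}\,d\nu(z);
$$
a Rubio de Francia iteration using the weighted maximal operator $\widetilde{M}_{\omega}f=M(f\omega)/\omega$ (bounded on $L^{q'}(\omega)$ with norm $\lesssim[\omega]_{A_{q}}$ via Buckley's theorem applied to $M$ on $L^{q'}(\omega^{1-q'})$) produces a majorant $\widetilde h\ge h$ for which $\widetilde h\,\omega\in A_{1}(\bB)$, and one can then apply the $L^{2}$-bound on $\B'$ to the radial extension $W(z)=\widetilde h(z/|z|)\omega(z/|z|)\in A_{1}(\B')$.

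For the base case I would pass to $\B'$ and identify $\Qo$ with the Calder\'on--Zygmund operator $T$ produced in the proof of Corollary~\ref{cor:CZ}, whose kernel carries the additional smoothing factor $(1-|z|^{2})^{1/2}(1-|w|^{2})^{1/2}$; the radial extension $w^{*}$ of $w$ lies in $A_{1}(\B')\subset A_{2}(\B')$ with constants controlled by $[w]_{A_{1}}$ (an $A_{1}$-analog of Lemma~\ref{lem:OmA2}). The main technical obstacle is that the plain $A_{2}$-bound of Corollary~\ref{cor:CZ} only yields $\|T\|_{L^{2}(w^{*})}\lesssim[w^{*}]_{A_{2}(\B')}\le[w^{*}]_{A_{1}(\B')}\lesssim[w]_{A_{1}}$, a full power short of the $[w]_{A_{1}}^{1/2}$ that is needed, and equivalently the Rubio de Francia route above would only give $[\omega]_{A_{p/2}}$ rather than $[\omega]_{A_{p/2}}^{1/2}$. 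The required refinement must come from exploiting the extra $(1-|z|^{2})^{1/2}(1-|w|^{2})^{1/2}$-smoothing in the CZ kernel together with sparse-domination techniques in the spirit of Hyt\"onen--P\'erez and Lerner--Ombrosi--P\'erez, which, coupled with the sharp reverse-H\"older inequality for $A_{1}(\B')$-weights, yield an improved endpoint bound $\|T\|_{L^{2}(W)}\lesssim[W]_{A_{1}(\B')}^{1/2}$ for operators of this specific form. Establishing this $A_{1}$-endpoint estimate on the nonisotropic ball is the technical heart of the proof; once in hand, the extrapolation described above delivers the proposition.
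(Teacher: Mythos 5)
Your plan correctly identifies the shape of the target (squaring converts the claim into a linear $[\omega]_{A_{p/2}}$ bound at the level $q=p/2$, which one may then hope to extrapolate), but it does not close the argument. The route you sketch bottoms out at a base estimate $\|\Qo\|_{L^{2,2}(w)}\lesssim [w]_{A_1}^{1/2}$, and the justification you offer for that estimate is not valid: for a general Calder\'on--Zygmund operator the known sharp $A_1$ bounds are of the form $[w]_{A_1}\log(e+[w]_{A_1})$, not $[w]_{A_1}^{1/2}$, and sparse domination coupled with a reverse-H\"older inequality does not produce a square-root gain. There is no general theorem of the kind you invoke, and the extra factor $(1-|z|^2)^{1/2}(1-|w|^2)^{1/2}$ in the kernel does not, by itself, upgrade a CZ bound in this way. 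You acknowledge the gap yourself, and it is the heart of the matter rather than a routine technicality.

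The paper's proof avoids Calder\'on--Zygmund theory and Corollary~\ref{cor:CZ} entirely, and is both shorter and more elementary. The key structural observation is that after the Schur-type bound $|\Qo|(1)\approx 1$, Cauchy--Schwarz gives the pointwise inequality $|\Qo(\varphi)(z)|^2\lesssim |\Qo|(\varphi^2)(z)$, and then
$$\int_0^1 |\Qo|(\varphi^2)(r\z)\,\frac{dr}{1-r^2}\ \lesssim\ \int_\B \frac{\varphi^2(w)}{|1-\z\overline w|^{n+1}}\,d\nu(w),$$
so the squared operator is dominated by a \emph{positive} potential-type operator. One then dualizes the outer $L^{p/2}(\omega)$ norm against $L^{(p/2)'}(\omega)$ and uses the Rubio de Francia construction from \cite{Dr-Gra-Pe-Pe} to replace the dual function $\psi$ by a majorant $v$ with $[v\omega]_{A_1}\lesssim [\omega]_{A_{p/2}}$; the $A_1$ property of $v\omega$ then controls the maximal function that arises from the potential kernel, and H\"older finishes the estimate. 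All of this produces a single factor $[\omega]_{A_{p/2}}$ on the squared quantity, hence $[\omega]_{A_{p/2}}^{1/2}$ on the original norm, without ever needing an improved $A_1$-bound for a singular operator. Your instinct that ``the required refinement must come from exploiting the extra smoothing'' is in the right spirit, but the correct way to exploit it is through the positivity/Schur bound $|\Qo|(1)\approx 1$, not through a sharper CZ weighted estimate.

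Two smaller remarks. First, your extrapolation step with $p_0=1$ would be sound \emph{if} the $A_1$ base case held, so that part of the reduction is fine. Second, there is no need to pass to $\B'$ and invoke Lemma~\ref{lem:OmA2} here; the duality in step two of your sketch can be carried out directly on $\bB$ against $L^{(p/2)'}(\omega)$, which is what the paper does.
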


\begin{proof}
Assume $0\le \varphi\in L^{p,2}(\omega)$ and denote by $|\Qo|$  the integral operator with kernel $|\Qo(z,w)|$.

Since $|\Qo|(1)\approx 1$, by H\"older's inequality,  we have
$$
|\Qo(\varphi)(z)|^2\lesssim |\Qo|(\varphi^2)(z) ,
$$
Using this fact, Fubini's theorem  and 
$|1-r\z \overline w|\approx 1-r+ |1-\z\overline w|$, we obtain
$$
\int_0^1 |\Qo|(\varphi^2)(r\z)\frac{dr}{1-r^2}
\lesssim \int_\B\frac{\varphi^2(w)}{|1-\z\overline w|^{n+1}}d\nu(w).
$$
Hence, duality $\left(L^{p/2}(\omega)\right)'=L^{(p/2)'}(\omega)$  and Fubini's theorem give
$$
\|\Qo(\varphi)\|_{L^{p,2}(\omega)}
\lesssim \sup_{\|\psi\|_{L^{(p/2)'}(\omega)}=1} 
\left(\int_\B\varphi^2(w)
\int_\bB \frac{|\psi(\z)|\omega(\z)}{|1-\z\overline w|^{n+1}}d\sigma(\z)d\nu(w)\right)^{1/2}.
$$
By \cite[Lemma 3]{Dr-Gra-Pe-Pe}, 
there exists a function $v\in L^{(p/2)'}(\omega)$ such that 
$$
v\ge |\psi|, \quad \|v\|_{L^{(p/2)'}(\omega)}\le 2 \|\psi\|_{L^{(p/2)'}(\omega)}\quad
\text{and}\quad [v\omega]_{A_1}\le 2 C(p/2)[\omega]_{A_{p/2}}
.$$
Thus, if $w=t\eta$, then, a.e $\eta\in\bB$, we have that
\begin{align*}
\int_\bB \frac{|\psi(\z)|\omega(\z)}{|1-\z\overline w|^{n+1}}d\sigma(\z)&
\le \frac{1}{1-t^2} M(v\omega)(\eta)
\le \frac{2}{1-t^2}[v\omega]_{A_1} \,v(\eta)\omega(\eta)\\
&\le C((p/2)') [\omega]_{A_{p/2}} \frac{v(\eta)\omega(\eta)}{1-t^2}.
\end{align*}
By H\"older's inequality and 
$\|v\|_{L^{(p/2)'}(\omega)}\le 2 \|\psi\|_{L^{(p/2)'}(\omega)}$, we obtain
$$
\int_\B\varphi^2(w)
\int_S \frac{|\psi(\z)|\omega(\z)}{|1-\z\overline w|^{n+1}}d\sigma(\z)d\nu(w)
\lesssim C((p/2)') [\omega]_{A_{p/2}} \|\varphi\|_{L^{p,2}(\omega)}^2,
$$
which concludes the proof.
\end{proof}

\subsection{Proof of Theorem \ref{thm:Bergman3}}
\begin{proof} We want to prove that:
\begin{enumerate}
	\item \label{item:duo1} If $2<p<\infty$ and  $\omega\in A_{1}$, then 
$\|\Be:L^{p,2}(\omega)\to F^{p,2}_0(\omega)\|\le C(p,n) [\omega]_{A_1}^{1/2}$.
\item \label{item:duo2} If $1<p< 2$ and $\omega' \in A_{1}$, then 
$\|\Be:L^{p,2}(\omega)\to F^{p,2}_0(\omega)\|\le C(p,n) [\omega']_{A_1}^{1/2}$.
\end{enumerate}


By \eqref{eqn:dualQ}, if $\omega\in A_p$, then 
$$
\|\Be:L^{p,2}(\omega)\to F^{p,2}_0(\omega)\|=\|\Qo:L^{p,2}(\omega)\to L^{p,2}(\omega)\|
=\|\Be:L^{p',2}(\omega')\to F^{p',2}_0(\omega')\|.
$$
Therefore, assertion \eqref{item:duo1} follows from Proposition \ref{prop:BtoFA1} 
and the fact that $[\omega]_{A_{p/2}}\le [\omega]_{A_1}$, $p>2$. 
Part \eqref{item:duo2} follows from  identity \eqref{eqn:dualQ} and part \eqref{item:duo1}. Indeed, 
if $\omega'\in A_1$, then $\omega\in A_p$ and 
$$
\|\Be:L^{p,2}(\omega)\to F^{p,2}_0(\omega)\|
=\|\Be:L^{p',2}(\omega')\to F^{p',2}_0(\omega')\|\le C(p,n) [\omega']_{A_1}^{1/2}.
$$

The sharpness of the above estimates follows from Lemma \ref{lem:Du}  and Theorem \ref{cor:Bergman3}. 
Indeed, for $p_0>2$, following the same arguments used to prove Corollary \ref{cor:Du}, we obtain that if 
$\|\Be:L^{p_0,2}(\omega)\to F^{p_0,2}_0(\omega)\|\le C(p,n) [\omega]_{A_1}^{\beta}$,  
then Lemma \ref{lem:Du} applied to the functions 
\begin{align*}
\varphi(\z)&=\left(\int_0^1 |\vartheta(r\z)|^2 \frac{2n r^{2n-1}}{1-r^2}dr\right)^{1/2}\quad\text{and},\\ 
\psi(\z)&=\left(\int_0^1 (1-r^2)\left(I+\frac{R}{n}\right)\Be(\vartheta)(r\z)\frac{2n r^{2n-1}}{1-r^2}dr\right)^{1/2},
\end{align*}
for $\vartheta\in C_c(\B)$, gives
$\|\Be:L^{p,2}\to F^{p,2}_0\|\le C(n) p^{\beta}$ for $p>p_0$. 
By Theorem \ref{cor:Bergman3} we have $\beta\ge 1/2$, which proves that the estimate is sharp.
\end{proof}


\begin{thebibliography}{12}
 
\bibitem{Ah-Br} Ahern, P.; Bruna, J.: 
Maximal and area integral characterizations of Hardy-Sobolev spaces in the unit ball of $C^n$. 
Rev. Mat. Iberoamericana  \textbf{4}  (1988),  no. 1, 123–153.

\bibitem{An-Va} Anderson, T.C. and  Vagharshakyan, A.: 
A simple proof of the sharp weighted estimate for Calderon-Zygmund operators on homogeneous spaces. 
J. Geom.Anal. \textbf{24} (2014), 1276-1297.

\bibitem{Bekolle} B\'ekoll\'e, D.: 
In\'egalit\'es \`a poids pour le projecteur de Bergman dans la boule unit\'e de $\C^n$. 
Stud. Math. \textbf{71} (1981), 305-323.

\bibitem{BenedekPanzone} Benedek, A.; Panzone, R.:
 The space $L^p$, with mixed norm. 
Duke Math. J. \textbf{ 28 } (1961), 301-324.

\bibitem{Bu} Buckley, S.M.: 
Estimates for operators on weighted spaces and reverse Jensen inequalities. 
Trans. Amer. Math. Soc. \textbf{340} (1993), no. 1. 253-272.

\bibitem{Ca-Or} Cascante, C.; Ortega, J.M.:  
Weak trace measures on Hardy-Sobolev spaces. 
Math. Res. Lett. \textbf{20} (2013), no. 2, 235-254.

\bibitem{Ca-Or4} Cascante, C.; Ortega, J.M.:  
Carleson measures for weighted Hardy-Sobolev spaces. 
Nagoya Math. J.  \textbf{186}  (2007), 29–68.

\bibitem{Co} Cohn, W.S.: 
The Bergman projection and vector-valued Hardy spaces.
Michigan Math. J.\textbf{44} (1997), no. 3, 509-528.

\bibitem{Co-Fe} Coifman, R.R.; Fefferman, C.:
Weighted norm inequalities for maximal functions and singular integrals.
Studia Math. \textbf{51} (1974), 241-250.

\bibitem{Chr} Christ, M.:   
A $T(b)$ theorem with remarks on analytic capacity and the Cauchy integral.
Colloq. Math., \textbf{60/61}, (1990), 601--628.

 \bibitem{CrU-Ma-Pe} Cruz-Uribe, D.; Martell, J. M.; Perez, C.: 
Sharp weighted estimates for classical operators. 
Adv. Math. \textbf{229} (2012), no. 1, 408-441.

\bibitem{Da-Le-Pe}   Damian, W.;  Lerner A.K.; Perez, C.: 
Sharp weighted bounds for multilinear maximal functions and Calderón-Zygmund operators. 
Preprint available at:  arXiv:1211.5115.

\bibitem{Dr-Gra-Pe-Pe} Dragi\v{c}evi\'{c}, O.; Grafakos, L.; Pereyra, M.C.; Petermichl, S.:
Extrapolation and sharp norm estimates for classical operators on weighted Lebesgue spaces.
Publ. Mat.  \textbf{49} (2005),  no. 1, 73–91. 

\bibitem{Du} Duoandikoetxea, J.: 
Extrapolation of weights revisited: new proofs and sharp bounds. 
J. Funct. Anal. \textbf{260} (2011), no. 6, 1886-1901.

\bibitem{Du2} Duoandikoetxea, J.:
Fourier analysis. 
Graduate Studies in Mathematics, \textbf{29}. A.M.S., Providence, RI, 2001. 
 
\bibitem{Fe-Pi}  Fefferman, R.; Pipher, J.: 
Multiparameter operators and sharp weighted inequalities. 
Amer. J. Math. \textbf{119} (1997), no. 2, 337–369.

\bibitem{Ho-Ver} Hollenbeck, B.; Verbitsky, I.E.: 
Best constants for the Riesz projection. 
J. Funct. Anal. \textbf{175} (2000), 370-392.

\bibitem{Hu-Mu-Whe} Hunt.; Muckenhoupt, B.; Wheeden,R.:
Weighted norm inequalities for the conjugate function and Hilbert transform. 
Trans. Amer. Math. Soc.  \textbf{176}  (1973), 227-251.

\bibitem{Hy} Hyt\"{o}nen, T.P.: 
The sharp weighted bound for general Calderón-Zygmund operators. 
Ann. of Math. (2) \textbf{175} (2012), no. 3, 1473-1506.

\bibitem{HyKa} Hyt\"{o}nen, T.P.; Kairema, A.: 
Systems of dyadic cubes in a doubling metric space. 
Colloq. Math. \textbf{126} (2012), no. 1, 1-33.

 \bibitem{JaTo} Jawerth, B.; Torchinsky, A.: 
The strong maximal function with respect to measures. 
Studia Math. \textbf{80} (1984), no. 3, 261-285. 

\bibitem{Lee-Rim}Lee, J.; Rim, K.S.:
Weighted norm inequalities for pluriharmonic conjugate functions. 
J. Math. Anal. Appl.  \textbf{268}  (2002),  no. 2, 707-717. 

\bibitem{Le0} Lerner, A.K.:  
 Sharp weighted norm inequalities for Littlewood-Paley operators and singular integrals. 
Adv. Math. \textbf{226} (2011), no. 5, 3912-3926

\bibitem{Le} Lerner, A.K.: 
On sharp aperture-weighted estimates for square functions. 
J. Fourier Anal. Appl. \textbf{20} (2014), no 4, 784-800.

\bibitem{Le2} Lerner, A.K.: 
A simple proof of the $A_2$ conjecture.   Int. Math. Res. Not. IMRN 2013, no 14, 3159-3170
Preprint available at: arXiv:1202.2824.  

\bibitem{Le3} Lerner, A.K.: 
A local mean oscillation decomposition and some of its applications.
Function spaces, approximation, inequalities and linearity, Lect. of the Spring School in Anal., Matfyzpress, Prague (2011), 71-106.

 \bibitem{Le4} Lerner, A.K.:  
A pointwise estimate for the local sharp maximal function with applications to singular integrals. 
Bull. London Math. Soc. \textbf{42} (2010) no. 5 843-856.

\bibitem{Lu-Pe-Re} Luque, T., Perez, C., Rela, E.: 
Optimal exponents in weighted estimates without examples. 
Preprint available at arXiv:1307-5642. 

\bibitem{Ka} Kairema, A.: 
Two-weight norm inequalities for potential type and maximal operators in a metric space. 
Publ. Mat. \textbf{57} (2013), no. 1, 3-56.

\bibitem{Lu} Luecking, D.H.: 
Representation and duality in weighted spaces of analytic functions. 
Indiana Univ. Math. J. \textbf{34} (1985), no. 2, 319-336.

\bibitem{Pav} Pavlovi\'{c}, M.:
On the Littlewood-Paley $g$-function and Calder\'on's area theorem.
Expo. Math. \textbf{ 31}  (2013),  no. 2, 169-195.

\bibitem{Po-Re} Pott, S.; Reguera, M. C.: 
Sharp B\'ekoll\'e estimates for the Bergman projection. 
J. Funct. Anal. \textbf{265} (2013), no. 12, 3233-3244.

\bibitem{Ru} Rudin, W.: 
Function theory in the unit ball of $\C^n$. 
Springer Verlag, New York, 1980.


\bibitem{Stein} Stein, E.M.: 
Harmonic analysis: real-variable methods, orthogonality, and oscillatory integrals. 
Princeton Mathematical Series, 43. Monographs in Harmonic Analysis III, 1993. 

\bibitem{tch} Tchoundja, E.: 
Carleson measures for the generalized Bergman spaces via a $T(1)$-type theorem.  
Ark. Mat. {\bf 46}, (2008), 377-406.


\bibitem{Wi} Wilson, M.: 
The intrinsic square function. 
Rev. Mat. Iberoam. \textbf{23} (2007), no. 3, 771-791.

 \end{thebibliography}
\end{document}